\documentclass[a4paper]{amsart}
\usepackage{hyperref}   
\usepackage{amssymb,xcolor}
\usepackage{stmaryrd} 
\usepackage{enumerate}
\newcommand{\unfoldedcomment}[7]{}
\newcommand{\comment}[1]{}
\newcommand{\foldedcomment}[7]{}

\usepackage{mathtools}  
\mathtoolsset{showonlyrefs}

\newtheorem{theorem}{Theorem}[section]
\newtheorem{definition}[theorem]{Definition}
\newtheorem{lemma}[theorem]{Lemma}
\newtheorem{remark}[theorem]{Remark}
\newtheorem{proposition}[theorem]{Proposition}
\numberwithin{equation}{section}
\numberwithin{figure}{section}
\begin{document}

\title{Some rigidity theorems for spectral curvature bounds}

\author{Xiaoxiang Chai}
\address{School of Mathematics and Statistics, Central China Normal University,
  Wuhan, 430079 P.R. China}
\address{Department of Mathematics, POSTECH, 77 Cheongam-Ro, Nam-Gu, Pohang, Gyeongbuk, Korea 37673}
\email{ xxchai@kias.re.kr }

\author{Yukai Sun}
\address{School of Mathematics and Statistics, Henan University, Kaifeng 475004 P. R. China and Center for Applied Mathematics of Henan Province, Henan University, Zhengzhou 450046 P. R. China
}
\email{sunyukai@henu.edu.cn}

\begin{abstract}
  We investigate the geometric implications of spectral curvature
  bounds, extending classical rigidity results in scalar curvature geometry
  to the spectral setting. By systematically employing the warped
  $\mu$-bubble method, we show
  classification theorems for stable weighted minimal hypersurfaces in
  3-manifolds with nonnegative spectral scalar curvature, and we establish band width estimates for both spectral Ricci and spectral scalar curvatures. Furthermore, we prove some splitting theorems under spectral curvature conditions, including a spectral
  version of the Geroch conjecture for manifolds with arbitrary ends and a
  result related to the Milnor conjecture. 
\end{abstract}

\subjclass[2020]{53C24}  
\keywords{Geroch conjecture, 
  scalar curvature rigidity, spectral curvature bound. }

\maketitle

\section{Introduction}

A celebrated result in scalar curvature geometry is the resolution of
the Geroch conjecture due to Schoen-Yau \cite{schoen-existence-1979}. The conjecture states that an
\(n\)-dimensional torus does not admit a metric of non-negative scalar
curvature. Schoen-Yau used a minimal hypersurface approach and it has 
become a major tool, and there are other approaches using
spinors \cite{gromov-positive-1983} and the technique of harmonic functions of Stern \cite{stern-scalar-2022}.

A weaker version of curvature, called the \textit{spectral
curvature}, which is defined as the first eigenvalue of an elliptic
operator involving the Laplace-Beltrami operator and the
curvature, recently has found its place in several important
problems. For example, the spectral curvature is useful in the stable Bernstein conjecture
\cite{chodosh-stable-2023}, \cite{chodosh-stable-2024}, \cite{mazet-stable-2024}
and the aspherical
conjecture \cite{chodosh-generalized-2024}. The earliest occurrence of such a notion seems to be in
\cite{schoen-existence-1983}.

We will use the following definition of the spectral curvatures using a positive function.

\begin{definition}
  \label{def spec}Let $(M, g)$ be a Riemannian manifold and $u$ be a positive
  function, we call
  \begin{equation}
    - \gamma u^{- 1} \Delta_g u + \tfrac{1}{2} R_g \label{eq spec scalar}
  \end{equation}
  the spectral scalar curvature and
  \begin{equation} \label{eq spec ricci1}
    - \gamma u^{- 1} \Delta_g u + \operatorname{Ric}_g 
  \end{equation}
  the spectral Ricci curvature where
  \begin{equation}
    \operatorname{Ric}_g := \inf_{e \in T_x M, |e|_g = 1} \operatorname{Ric}_g (e, e)
    \label{eq Rc}
  \end{equation}
  is the least Ricci curvature at a given point $x \in M$. We will say
  spectral scalar (resp. Ricci) curvature modified by $\gamma$ and $u$ if the
  dependence on $\gamma$ and $u$ needs to be explicit.
\end{definition}

As easily checked, for a closed manifold, a lower bound on \eqref{eq spec scalar} would imply
the same lower bound on the first eigenvalue of \( -\gamma \Delta_g + \tfrac{1}{2} R_g \) (see \cite[Theorem 1]{fischer-colbrie-structure-1980}). Similar implications
work for \eqref{eq spec ricci1}.

Here we fix the convention of $\ensuremath{\operatorname{Ric}}_g$: if it is
given no argument it means the least Ricci curvature; two arguments mean the
usual Ricci curvature. We also find it convenient to use
\begin{equation}
  - \gamma u^{- 1} \Delta_g u + 2\ensuremath{\operatorname{Ric}}_g . \label{eq
  ric2 spec}
\end{equation}

\subsection{Weighted minimal hypersurfaces}

A suitable generalization of Schoen-Yau's technique of minimal
hypersurfaces in the study of spectral curvatures, such as the
spectral Ricci curvature and the spectral scalar curvature, is the
notion of a weighted minimal hypersurface, which arises as a critical
point of the weighted area functional.

\begin{definition}
  \label{def weighted minimal}We say that $\Sigma$ is a $u^{\gamma}$-weighted
  minimal hypersurface if it is a critical point of the $u^{\gamma}$-weighted
  area functional
  \begin{equation}
    \mathcal{A}_u (S) = \int_S u^{\gamma} \mathrm{d} \mathcal{H}^{n - 1}
    \label{weighted area functional}
  \end{equation}
  defined for all oriented hypersurfaces. The references to $u^{\gamma}$ would
  be omitted if the dependence on $\gamma$ and $u$ are clear.
\end{definition}

Given a smooth family of hypersurfaces $\{\Sigma_t \}$ such that $\Sigma_0 =
\Sigma$, the first variation of the weighted area functional is
\[ \tfrac{\mathrm{d}}{\mathrm{d} t} \mathcal{A}_u (\Sigma_t) |_{t = 0} =
   \int_{\Sigma} (H + \gamma u^{- 1} u_{\nu}) u^{\gamma} \langle Y, \nu
   \rangle \mathrm{d} \mathcal{H}^{n - 1}, \]
where $\nu$ is a chosen unit normal to $\Sigma$, $H$ is the mean curvature
defined as $\ensuremath{\operatorname{div}}_S \nu$ and $Y$ is the variational
vector field.
\begin{definition}
We call $H + \gamma u^{- 1} u_{\nu}$ the
$u^{\gamma}$-\text{{\itshape{weighted}}} \text{{\itshape{mean}}}
\text{{\itshape{curvature}}}. If the second variation is non-negative for a
weighted minimal hypersurface, we call $\Sigma$ \text{{\itshape{stable}}}, and
we call $\Sigma$ $u^{\gamma}$-\text{{\itshape{weighted}}}
\text{{\itshape{area}}}-\text{{\itshape{minimizing}}} if $\Sigma$ is a
minimizer to the functional \eqref{weighted area
  functional}. Again, the references to $u^{\gamma}$ would be
omitted if the dependence on $\gamma$ and $u$ are clear.
\end{definition}

We will introduce
generalizations of the weighted minimal hypersurfaces in Section \ref{sec warp}, in particular,
warped $h$-bubbles.

We now state our first result regarding the weighted minimal hypersurface, which is
a classification result of stable weighted surfaces in a 3-manifold of non-negative
spectral curvature. The result is an analog of {\cite[Theorem
3]{fischer-colbrie-structure-1980}}, {\cite{cai-rigidity-2000}} which
classifies stable minimal surfaces in a 3-manifold of non-negative scalar
curvature.

\begin{theorem}
  \label{thm weighted fcs}Let \( 0 \leqslant \gamma <4 \),
  $(M^3, g)$ be a 3-dimensional complete manifold
  with spectral nonnegative scalar curvature and $\Sigma$ be a stable,
  complete, oriented weighted minimal surface with weight $u^{\gamma}$ in $(M,
  g)$. Then there are two possibilities:
\begin{enumerate}[(a)]
    \item \label{weight fcs a} $\Sigma$ is compact, then $\Sigma$ is a sphere or a torus, in the
    case of a torus, $\Sigma$ is flat, if further $\Sigma$ is weighted
    area-minimizing, then $(M, g)$ locally splits;
    
  \item \label{weight fcs b} $\Sigma$ is non-compact and \( 0 \leqslant \gamma < 3 \), then $\Sigma$ is conformally equivalent to
    the complex plane $\mathbb{C}$ or a cylinder $\mathbb{A}$; $\Sigma$ is
    flat if it is conformally equivalent to a cylinder.
  \end{enumerate}
\end{theorem}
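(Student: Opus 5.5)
The plan is to follow the Fischer-Colbrie--Schoen argument, with the weight $u^{\gamma}$ absorbed into the stability inequality. First I compute the second variation of $\mathcal{A}_u$ at a weighted minimal $\Sigma$ for normal variations $\phi\nu$. Using $H=-\gamma u^{-1}u_\nu$, the splitting $\Delta_g u=\Delta_\Sigma u+H u_\nu+\nabla^2u(\nu,\nu)$, and the Gauss equation $\operatorname{Ric}(\nu,\nu)+|A|^2=\tfrac12 R_g-K+\tfrac12(H^2+|A|^2)$, stability becomes
\[
\int_\Sigma u^{\gamma}|\nabla\phi|^2 \ \ge\ \int_\Sigma u^{\gamma}\phi^2\Big(-\gamma u^{-1}\Delta_g u+\tfrac12R_g-K+\tfrac12|A|^2+\tfrac12H^2+\gamma u^{-1}\Delta_\Sigma u-\gamma u^{-2}u_\nu^2\Big).
\]
Since $H^2=\gamma^2u^{-2}u_\nu^2$, the terms $\tfrac12H^2-\gamma u^{-2}u_\nu^2$ combine to $(\tfrac{\gamma^2}{2}-\gamma)u^{-2}u_\nu^2$. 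When this coefficient is negative, I will recover it using $|A|^2\ge \tfrac12H^2$, or else handle it by a slightly different grouping; this needs care but it is routine bookkeeping. Next I substitute $\phi=u^{-\gamma/2}\psi$. After integrating by parts, the $\Delta_\Sigma u$ term together with the cross terms leaves an inequality of the form
\[
\int_\Sigma |\nabla\psi|^2 + a\int_\Sigma \psi^2 |\nabla_\Sigma \log u|^2 \ \ge\ \int_\Sigma (\mathcal S - K)\psi^2 + b\int_\Sigma \psi\langle\nabla\psi,\nabla\log u\rangle ,
\]
where $\mathcal S\ge0$ is the spectral scalar curvature. I then absorb the $\nabla\log u$ terms by Cauchy--Schwarz. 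This gives $\lambda_1(-\beta\Delta_\Sigma+K)\ge 0$ with $\beta=\tfrac{4}{4-\gamma}$, or some similar explicit constant; the constraint $\gamma<4$ is exactly what makes $\beta$ finite and positive.

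\textbf{Compact case.} I test with $\psi=1$ (or the first eigenfunction) and apply Gauss--Bonnet, which gives $2\pi\chi(\Sigma)\ge0$. Hence $\Sigma$ is a sphere or a torus. If it is a torus, all the inequalities must be equalities. Then $\mathcal S=0$, $A=0$, $\nabla_\Sigma u=0$, $u_\nu=0$, and $K\equiv0$, so $\Sigma$ is flat. For the area-minimizing case I will run the standard Cai--Galloway/Bray--Brendle--Neves foliation argument with weighted mean curvature. I construct a local foliation $\Sigma_t$ of constant weighted mean curvature using the implicit function theorem; invertibility comes from the kernel of the weighted Jacobi operator being the constants. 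The Riccati-type inequality then shows the weighted mean curvature of $\Sigma_t$ has a sign that would make $\mathcal{A}_u(\Sigma_t)\le\mathcal{A}_u(\Sigma)$. Minimality forces equality, so every leaf is a flat totally geodesic torus with $u$ constant along the normal flow, and $g=dt^2+g_\Sigma$ locally.

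\textbf{Noncompact case.} I use the result that a complete surface with $\lambda_1(-\beta\Delta+K)\ge0$ and $\beta<4$ has at most quadratic area growth, or equivalently is parabolic with finite total curvature; this follows Castillon or Fischer-Colbrie via log-cutoff functions. The threshold $\beta<4$ translates to $\gamma<3$ for my constant: $\tfrac{4}{4-\gamma}<4$ iff $\gamma<3$. This is where the stronger hypothesis enters. Parabolicity plus Huber's theorem then gives $\Sigma$ conformal to $\mathbb C$ or to $\mathbb A$. In the cylinder case I will use log cutoffs in the stability inequality; Cohn-Vossen forces $\int K\le 0$ while the inequality forces $\int K\ge0$. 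Equality then propagates to $K\equiv0$, so $\Sigma$ is flat.

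\textbf{Main obstacle.} I expect the hardest part to be the exact algebra producing the constant $\beta$ with the stated thresholds $4$ and $3$. This requires optimally choosing the substitution exponent and the Cauchy--Schwarz split. I would first try $\phi=u^{-\gamma/2}\psi$ with a free parameter $\epsilon$ in Young's inequality, and tune $\epsilon$ until both thresholds come out.
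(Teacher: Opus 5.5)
Your plan is essentially the paper's proof. The shared steps are:
\begin{itemize}
\item the substitution $\psi=\phi u^{\gamma/2}$ with Young's inequality, which produces $-\tfrac{4}{4-\gamma}\Delta_\Sigma+K$;
\item Gauss--Bonnet and infinitesimal rigidity in the compact case;
\item a foliation of constant weighted mean curvature whose sign makes every leaf minimizing;
\item B\'erard--Castillon (threshold $a>\tfrac14$, i.e.\ $\gamma<3$) in the noncompact case.
\end{itemize}

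One correction to your bookkeeping: the $u_\nu^2$ coefficient is $\gamma-\tfrac{\gamma^2}{2}$, not $\tfrac{\gamma^2}{2}-\gamma$. With $w=\log u$, the Jacobi operator contributes $+\gamma w_\nu^2+\gamma H w_\nu$, which you must add to $\tfrac12 H^2$. Then $\tfrac12|A|^2\ge\tfrac14H^2$ turns the total into $\gamma(1-\tfrac{\gamma}{4})\ge 0$, exactly the paper's case II rewrite. With the sign you wrote, this fix would fail for $0<\gamma<\tfrac43$.
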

\begin{remark}\label{range gamma}
The case with $\Sigma$ being non-compact is more subtle than {\cite[Theorem
3]{fischer-colbrie-structure-1980}} in Theorem \ref{thm
weighted fcs} and subsequent Theorem \ref{thm fcs ric2}. It is due to a question posed in
{\cite[Remark 1]{fischer-colbrie-structure-1980}} regarding the inverse
spectral properties of the elliptic operator $- \Delta_{\Sigma} + a K$ where
$a \in \mathbb{R}$ and $K$ is the Gaussian curvature of the surface $\Sigma$.
Here, we have made use of {\cite{berard-inverse-2014}}. Because of this, we need the condition
\( 0 \leqslant \gamma <3 \); and 
an extra assumption on the volume growth of \( \Sigma \) is required when \( 3 \leqslant \gamma <4 \), the case
which we do not discuss in Theorem \ref{thm weighted fcs}.
\end{remark}

The strategy of proving the compact case of Theorem \ref{thm weighted fcs} is
as follows: we use the stability and the spectral curvature condition (with
the Gauss-Bonnet theorem) to show that $\Sigma$ is infinitesimally rigid (cf.
{\cite{fischer-colbrie-structure-1980}}); second, construct a foliation
$\{\Sigma_t \}$ with $\Sigma$ be a leaf using Theorem \ref{lm foliation};
third, determine the sign of the weighted mean curvature of each leaf using the
curvature condition again; fourth, show that every nearby $\Sigma_t$ is also
minimizing using the first variation (cf. {\cite{bray-rigidity-2010}}). We can
show a global splitting if we assume an additional topological condition, see
Theorem \ref{thm cylinder splitting} (cf. {\cite{cai-rigidity-2000}}).

This strategy for proving the rigidity will be used in Subsection \ref{subsec
cwmc} and the band width estimates in Subsection \ref{subsec intro bw} with
suitable adaptations. To avoid repetition, we unify and streamline these proofs
by introducing the variables $Z$ and $W$ in Subsection \ref{sec rewrite}. The
key differences left are estimating the mean
curvature of a leaf of the foliation and calculating the rigid metrics.

With a slightly different proof, we have the following.

\begin{theorem}
  \label{thm fcs ric2}Theorem \ref{thm weighted fcs} (\ref{weight fcs a}) holds if the spectral
  non-negative scalar curvature is replaced by the condition that $- \gamma
  u^{- 1} \Delta_g u + 2\ensuremath{\operatorname{Ric}}_{g} \geqslant 0$ with $0
  \leqslant \gamma < 4$ and Theorem \ref{thm weighted fcs} (\ref{weight fcs b}) holds if additionally \( 0 \leqslant \gamma < \tfrac{7}{2} \).
\end{theorem}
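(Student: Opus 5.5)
The plan is to follow the proof of Theorem \ref{thm weighted fcs} step by step and change only the pointwise curvature estimate coming from the stability inequality. Let $\nu$ be the unit normal of $\Sigma$ and $\{e_1,e_2\}$ a frame of $T\Sigma$. The second variation of $\mathcal{A}_u$ along $\varphi\nu$ gives, for every compactly supported $\varphi$,
\[
  \int_\Sigma u^\gamma |\nabla\varphi|^2 \geqslant \int_\Sigma u^\gamma \varphi^2\Bigl(|A|^2+\operatorname{Ric}(\nu,\nu)-\gamma u^{-1}\Delta_g u+\gamma u^{-1}\Delta_\Sigma u+\gamma u^{-1}Hu_\nu+\gamma u^{-2}u_\nu^2\Bigr).
\]
I would rewrite this with the variables $Z$ and $W$ of Subsection \ref{sec rewrite}. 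Then substitute $\varphi=u^{-\gamma/2}\psi$ (or use the Fischer-Colbrie--Schoen trick) and integrate by parts the $\Delta_\Sigma u$ term. Using the weighted minimality $H=-\gamma u^{-1}u_\nu$ and Cauchy--Schwarz, this gives an inequality of the form $\int|\nabla\psi|^2 \geqslant \int \psi^2(\mathcal{R}+c_\gamma|A|^2)$ with $c_\gamma \geqslant 0$, where $\mathcal{R}$ is the "ambient part". This works for $\gamma<4$, and that is where the restriction $\gamma<4$ enters.

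The new step is to bound the ambient part $\operatorname{Ric}(\nu,\nu)-\gamma u^{-1}\Delta_g u$ by $-aK$ plus nonnegative terms, using $-\gamma u^{-1}\Delta_g u+2\operatorname{Ric}_g\geqslant 0$ instead of the spectral scalar bound. I would split
\[
  \operatorname{Ric}(\nu,\nu)=t\operatorname{Ric}(\nu,\nu)+(1-t)\Bigl(\tfrac12R_g-\operatorname{sec}(e_1,e_2)\Bigr),
\]
and then use the Gauss equation $\operatorname{sec}(e_1,e_2)=K-\det A$ and $R_g\geqslant 3\operatorname{Ric}_g$. The parameter $t\in[0,1]$ is chosen so that all terms involving $\Delta_g u$ are absorbed by multiples of $2\operatorname{Ric}_g-\gamma u^{-1}\Delta_g u\geqslant 0$. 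What remains is $-(1-t)K$ plus terms in $|A|^2$ and $H^2$, which are controlled by the $|A|^2$ and $u_\nu^2$ terms above. The expected outcome is a stability operator $-\Delta_\Sigma+aK$ with $a=a(\gamma)>0$, or in weighted form $-\gamma'\Delta_\Sigma+K\geqslant 0$ with some effective $\gamma'$.

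With that inequality in hand, case (a) follows as before. Take $\psi=1$, or the positive first eigenfunction, and apply Gauss--Bonnet; this gives $\chi(\Sigma)\geqslant 0$, so $\Sigma$ is a sphere or a torus. In the torus case all the inequalities used must be equalities. Hence $A=0$, $u_\nu=0$, $K=0$, $\operatorname{Ric}(\nu,\nu)=\operatorname{Ric}_g$, and $2\operatorname{Ric}_g=\gamma u^{-1}\Delta_g u$ along $\Sigma$, so $\Sigma$ is infinitesimally rigid. If $\Sigma$ is weighted area-minimizing, I would build the weighted CMC foliation $\{\Sigma_t\}$ through $\Sigma$ via Theorem \ref{lm foliation}. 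The same inequality shows that the weighted mean curvature of $\Sigma_t$ has a sign. The first variation argument of Bray--Brendle--Neves then shows each $\Sigma_t$ is minimizing and hence rigid, so $(M,g)$ locally splits.

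For case (b), the inequality $-\Delta_\Sigma+aK\geqslant 0$ on a complete noncompact surface, combined with \cite{berard-inverse-2014}, gives that $\Sigma$ is conformal to $\mathbb{C}$ or $\mathbb{A}$, and flat in the cylinder case. That result needs $a$ above a threshold; in Theorem \ref{thm weighted fcs} the threshold is attained exactly when $\gamma<3$.

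The main obstacle is the optimization in the choice of $t$ and the Cauchy--Schwarz weights. The constant $a(\gamma)$ must stay admissible for \cite{berard-inverse-2014} precisely when $\gamma<\tfrac72$, and the $|A|^2$ coefficient must remain nonnegative for all $\gamma<4$. I would first try $t$ such that the $\Delta_g u$ coefficients combine exactly into $\tfrac{3}{2}(1-t)$ times the Ricci condition, and check whether the resulting $a(\gamma)$ crosses Bérard's threshold at $\gamma=\tfrac72$. If it does not, I would redistribute the $u_\nu^2$ term between the $H^2$ absorption and the $K$ coefficient.
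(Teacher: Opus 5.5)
\textbf{There is a genuine gap: your curvature split cannot absorb the $\Delta_g u$ term.} In the stability potential, $-\gamma u^{-1}\Delta_g u$ appears with coefficient exactly one and has no sign. The only thing that controls it is the hypothesis, so you must put exactly $2\operatorname{Ric}_g$ next to it. With $t\in[0,1]$ and $R_g\geqslant 3\operatorname{Ric}_g$, the best lower bound for $t\operatorname{Ric}(\nu,\nu)+\tfrac{1-t}{2}R_g$ is $\tfrac{3-t}{2}\operatorname{Ric}_g\leqslant\tfrac32\operatorname{Ric}_g$, which is too small. Concretely, take $\gamma>0$ and a point where $\operatorname{Ric}=\lambda g$ with $\lambda>0$ and $\gamma u^{-1}\Delta_g u=2\lambda$; the hypothesis allows this. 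There your bracket $t\operatorname{Ric}(\nu,\nu)+\tfrac{1-t}{2}R_g-\gamma u^{-1}\Delta_g u$ equals $-\tfrac{1+t}{2}\lambda<0$, while $|A|^2$, $\det A$ and $u_\nu^2$ can all vanish at that point. Re-optimizing the Cauchy--Schwarz weights or redistributing $u_\nu^2$ cannot help, since none of these terms involve ambient curvature. Even formally, your $K$-coefficient $1-t\leqslant 1$ would give at best $a=1-\gamma/4$ in \cite{berard-inverse-2014}. That is the range $\gamma<3$ of the scalar case, never $\gamma<\tfrac72$.

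The missing step is your split at $t=-1$, without estimating $R_g$: $\operatorname{Ric}(\nu,\nu)=\operatorname{Ric}(e_1,e_1)+\operatorname{Ric}(e_2,e_2)-2\operatorname{sec}(e_1,e_2)$. Combined with the Gauss equation and $|A|^2=H^2-2\det A$, this is the paper's identity \eqref{eq nD zhu}: $\operatorname{Ric}(\nu,\nu)+|A|^2=\operatorname{Ric}(e_1,e_1)+\operatorname{Ric}(e_2,e_2)+H^2-2K$ (rewrite III). Substituting $H=-\gamma w_\nu$ with $w=\log u$ gives $H^2+\gamma Hw_\nu+\gamma w_\nu^2=\gamma w_\nu^2$. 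Hence $Z+R_\Sigma=(2\operatorname{Ric}_g-\gamma u^{-1}\Delta_g u)+(\operatorname{Ric}(e_1,e_1)+\operatorname{Ric}(e_2,e_2)-2\operatorname{Ric}_g)+\gamma w_\nu^2\geqslant 0$. Lemma \ref{lm rewrite prelim} then gives $-\tfrac{4}{4-\gamma}\Delta_\Sigma+2K\geqslant 0$, i.e.\ $a=\tfrac{4-\gamma}{2}$, and $a>\tfrac14$ is exactly $\gamma<\tfrac72$.

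This identity contains no $|A|^2$ term, so your torus-case equality analysis must also change. You get $K=0$, $\nabla_\Sigma w=0$, $w_\nu=0$ (so $H=0$), $\operatorname{Ric}(e_i,e_i)=\operatorname{Ric}_g$ and $2\operatorname{Ric}_g=\gamma u^{-1}\Delta_g u$, but not $A=0$. That is still enough for Lemma \ref{lm foliation}, since the zeroth-order term of $L_\Sigma$ vanishes. Total geodesicity of the leaves, however, needs a separate argument. The paper shows the foliated region is locally a doubly warped product $\mathrm{d}t^2+\phi^2\mathrm{d}s_1^2+\varphi^2\mathrm{d}s_2^2$. On every leaf $G=\phi'/\phi+\varphi'/\varphi=H=0$, and $\operatorname{Ric}(\partial_t,\partial_t)\geqslant\operatorname{Ric}(e_i,e_i)$ gives $G'+F^2\leqslant 0$ with $F=\phi'/\phi-\varphi'/\varphi$ (see \eqref{eq G F}). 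This forces $F=0$, so $\phi$ and $\varphi$ are constant.
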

\begin{remark}
  Again, for the range of \( \gamma \), see Remark \ref{range gamma} and \cite{berard-inverse-2014}.
  \end{remark}

For the Ricci curvature $- \gamma u^{- 1} \Delta_g u
+\ensuremath{\operatorname{Ric}}_{g}$, the splitting result was obtained by
{\cite{antonelli-sharp-2024,catino-criticality-2025}} which works in any
dimensions.

\subsection{Constant weighted mean curvature}\label{subsec cwmc}

The concept of a (Riemannian) \text{{\itshape{band}}} is introduced by Gromov
in his studies of metric inequalities {\cite{G2018}}. It is a Riemannian
manifold with at least two boundaries. We can group the boundaries into two
groups $\partial_- M$ and $\partial_+ M$ allowing the \text{{\itshape{band
width}}} to be defined as the distance from $\partial_- M$ to $\partial_+ M$.

We fix the notation of band, let $\Sigma$ be an oriented hypersurface
homologous to $\partial_- M$, we choose the direction of the unit normal of
$\Sigma$ such that it points outward from the region bounded by $\Sigma$ and
$\partial_- M$ and we denote it by $\nu_\Sigma$. In particular, we choose the direction of the unit normal \( \nu_{-} \)
of
$\partial_- M$ pointing to the inside of $M$ and the unit normal \( \nu_{+} \) of $\partial_+ M$ outside of $M$.

By using hypersurfaces of constant weighted mean curvature, we can show some
rigidity theorems for 
\( n \)-dimensional \textit{torical bands} which are simply \( T^{n-1}\times [-1,1] \) with a smooth metric \( g \).
Here, \( T^n \) denotes the \( n \)-dimensional torus.


Below is a spectral analog of {\cite[Theorem 1.1]{andersson-rigidity-2008}}.
\comment{
\begin{theorem}
  \label{thm acg}Let $0 \leqslant \gamma < \tfrac{2 n}{n - 1}$, $\Lambda < 0$
  and
  \[ \beta = \frac{\sqrt{- 2 \Lambda}}{\sqrt{2 (n - 1) - (n - 2) \gamma} 
     \sqrt{2 n - (n - 1) \gamma}}, \text{ } \alpha = (2 - \gamma) \beta . \]
 Let $(M,g)$ be an \( n \)-dimensional torical band
 and $u > 0$ such that
  \begin{equation}
    - \gamma u^{- 1} \Delta_g u + \tfrac{1}{2} R_g \geqslant \Lambda,
    \label{neg eigen sc}
  \end{equation}
  and
  \[ H_+ + \gamma u^{- 1} u_{\nu_+} \geqslant (n - 1) \alpha + \gamma \beta
     \text{ along } \partial_+ M =\{1\} \times T^{n - 1}, \]
  and
  \[ H_- + \gamma u^{- 1} u_{\nu_-} \leqslant (n - 1) \alpha + \gamma \beta
     \text{ along } \partial_- M =\{- 1\} \times T^{n - 1} . \]
  Then $(M, g)$ must be isometric to $([t_-, t_+] \times T^{n - 1}, \mathrm{d}
  t^2 + e^{2 \alpha t} g_{\mathbb{T}^{n - 1}})$ for some $t_- < t_+$ with $u$
  given by a constant multiple of $e^{\beta t}$.
\end{theorem}
}
\begin{theorem}
  \label{thm acg}Let $0 \leqslant \gamma < \tfrac{2 (n - 1)}{n - 2}$, $\Gamma
  = \tfrac{2 n - (n - 1) \gamma}{4 (n - 1) - 2 (n - 2) \gamma}$, and $\Lambda$
  be a constant such that either $\Gamma \Lambda < 0$ or $\Gamma = \Lambda =
  0$. If $\Gamma \Lambda < 0$, set $\eta = \eta_{\Gamma, \Lambda} : = \sqrt{-
  \Lambda / \Gamma}$; and if $\Gamma = \Lambda = 0$, set $\eta = 0$. Set
  $\beta = \tfrac{1}{2 (n - 1) - (n - 2) \gamma} \eta$ and $\alpha = \tfrac{2
  - \gamma}{2 (n - 1) - (n - 2) \gamma} \eta$. If $(M, g)$ is an
  $n$-dimensional torical band and $u > 0$ such that
  \begin{equation}
    - \gamma u^{- 1} \Delta_g u + \tfrac{1}{2} R_g \geqslant \Lambda
    \label{neg eigen sc}
  \end{equation}
  in \( M \)
  and
  \[ H_{\partial_+M} + \gamma u^{- 1} u_{\nu_+} \geqslant \eta \text{ }  \text{along }
     \partial_{+} M =\{1\} \times T^{n - 1}, \]
  and
  \[ H_{\partial_-M} + \gamma u^{- 1} u_{\nu_-} \leqslant \eta \text{ }  \text{along }
     \partial_- M =\{- 1\} \times T^{n - 1} . \]
  Then $(M, g)$ must be isometric to $([t_-, t_+] \times T^{n - 1}, \mathrm{d}
  t^2 + e^{2 \alpha t} g_{T^{n - 1}})$ for some $t_- < t_+$ with $u$
  given by a constant multiple of $e^{\beta t}$. Here, $g_{T^{n - 1}}$ is some flat metric on $T^{n - 1}$
\end{theorem}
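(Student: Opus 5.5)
We prove this with a weighted $\mu$-bubble (constant weighted mean curvature hypersurface) and the infinitesimal-rigidity-then-foliation strategy described after Theorem~\ref{thm weighted fcs}.

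\textbf{Step 1: the minimizer.} Fix a reference hypersurface $\Omega_0$ bounded by $\partial_-M$. Minimize
\[ \mathcal{F}(\Omega)=\int_{\partial^*\Omega\setminus\partial_-M}u^{\gamma}\,\mathrm{d}\mathcal{H}^{n-1}-\eta\int_{\Omega}u^{\gamma}\,\mathrm{d}\mathcal{H}^n \]
over Caccioppoli sets $\Omega$ with $\Omega\Delta\Omega_0$ compactly contained in the interior. The boundary conditions serve as barriers. Along $\partial_+M$ the inequality $H+\gamma u^{-1}u_\nu\ge\eta$ holds, and along $\partial_-M$ the reverse inequality $\le\eta$ holds. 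The strong maximum principle then keeps the minimizer $\Sigma=\partial\Omega\setminus\partial_-M$ off the boundary. The one exception is the equality case, where $\Sigma$ touches $\partial_\pm M$, and we argue directly there. In dimensions $n\le7$, $\Sigma$ is a smooth closed hypersurface. Its weighted mean curvature is $H_\Sigma+\gamma u^{-1}u_\nu=\eta$. Because it is homologous to $\partial_-M$ in $T^{n-1}\times[-1,1]$, it carries a component of nonzero degree onto $T^{n-1}$. Higher dimensions are handled by the usual generic-regularity caveat.

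\textbf{Step 2: the stability inequality.} Write the second variation with test function $\phi u^{-\gamma/2}$, or work in the $Z,W$ variables of Subsection~\ref{sec rewrite}. Use the traced Gauss equation $R_g=R_\Sigma+2\operatorname{Ric}(\nu,\nu)+|A|^2-H^2$. Substitute $\Delta_g u=\Delta_\Sigma u+H u_\nu+\nabla^2u(\nu,\nu)$ and $H=\eta-\gamma u^{-1}u_\nu$. Split $|A|^2\ge H^2/(n-1)$, and absorb the $u_\nu$ terms by completing a square in $X=u^{-1}u_\nu$.

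The coefficient restriction $\gamma<\tfrac{2(n-1)}{n-2}$ makes the quadratic form in $X$ definite. The minimum over $X$ yields the constant $\Gamma\eta^2+\Lambda$, which vanishes by the choice of $\eta$. This leaves an inequality of the form
\[ \int_\Sigma |\nabla\psi|^2 + c\,\tfrac{1}{2}R_\Sigma\psi^2 \ge \int_\Sigma(\text{nonnegative squares})\psi^2 \]
for a suitable weighted $\psi$ and some $c>0$. This says $-\Delta+cR_\Sigma/2$, suitably weighted, has nonnegative spectrum. Since $\Sigma$ admits a nonzero-degree map to $T^{n-1}$, the spectral Geroch theorem applies: via the $\gamma$-range, conformal change, or an induction in the spirit of Schoen--Yau. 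Hence equality must hold. We conclude that $\Sigma$ is flat and umbilic with $A=\tfrac{H}{n-1}g$. We also get $u^{-1}u_\nu=\beta$, $H=(n-1)\alpha$, $\Delta_\Sigma u=0$, and equality in the curvature assumption along $\Sigma$.

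\textbf{Step 3: foliation and splitting.} This step is the main obstacle. Use the foliation lemma (Theorem~\ref{lm foliation}) to produce a foliation $\Sigma_t$ near $\Sigma$ with constant weighted mean curvature $\eta(t)$ on each leaf. Then redo the Step~2 computation on the leaves. The quantities $Z,W$ give a differential inequality for $\eta(t)-\eta$, and ODE comparison shows $\eta(t)\le\eta$ for $t>0$. The first-variation comparison of $\mathcal F$, as in Bray--Brendle--Neves, then shows that each $\Sigma_t$ is also a minimizer. Applying Step~2 to every leaf shows each leaf is infinitesimally rigid. So the lapse is constant, the normal flow is geodesic, and the metric is $dt^2+g_t$ with $\partial_tg_t=2\alpha g_t$. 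Likewise $\partial_t\log u=\beta$ and $u$ is constant along leaves. This gives $g=dt^2+e^{2\alpha t}g_{T^{n-1}}$ with $g_{T^{n-1}}$ flat and $u=Ce^{\beta t}$.

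\textbf{Step 4: globalizing.} A continuity (open-closed) argument spreads the local splitting across the whole band. The foliation reaches both $\partial_\pm M$, where the boundary inequalities become equalities. The delicate points are the choice of weights in completing the square, which must reproduce exactly $\alpha=(2-\gamma)\beta$, and the case $\Gamma=\Lambda=0$, where $\eta=0$ and the argument reduces to a weighted minimal hypersurface.
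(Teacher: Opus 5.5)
Your analytic core is the paper's:
\begin{itemize}
\item the case II rewrite of Subsection \ref{sec rewrite}, where the square in $w_\nu$ has positive coefficient exactly when $\gamma<\tfrac{2(n-1)}{n-2}$ and leaves $\Gamma\eta^2+\Lambda=0$;
\item spectral Geroch on the leaf, which the paper does via the conformal factor $v^{\kappa}$ with $\kappa<1$, again equivalent to $\gamma<\tfrac{2(n-1)}{n-2}$;
\item the foliation of Lemma \ref{lm foliation}, the sign of $\tilde H$ along leaves, and the first-variation argument making every leaf a minimizer;
\item reading off $\mathrm{d}t^2+e^{2\alpha t}g_{T^{n-1}}$ and $u=Ce^{\beta t}$.
\end{itemize}

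You genuinely differ in how you treat the non-strict boundary inequalities. The paper never needs a minimizer that touches $\partial M$. It first shows that strict barriers are impossible (Theorem \ref{thm non-exist strict barrier}), because the same rigidity/foliation argument propagates to $\partial_-M$ and contradicts strictness. Then, from each boundary component, it either manufactures a strict barrier nearby or runs a maximal foliation by leaves with $H+\gamma u^{-1}u_\nu=\eta$. The strict barrier comes from Lemma \ref{lm flow to barrier} if $H+\gamma u^{-1}u_\nu-\eta\lneqq 0$, or from Lemma \ref{lm non stable to barrier} if equality holds but the boundary is unstable. Either the two foliations meet and give rigidity, or two strict barriers appear, which is excluded. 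This uses only the interior existence result, Lemma \ref{lem-warped-bubble}. Your single direct minimization avoids that case analysis and the separate nonexistence theorem, and your continuation argument then does all the work.

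The price is precisely the step you leave open ("we argue directly there"). With weak inequalities, Lemma \ref{lem-warped-bubble} does not give a minimizer in the class $\Omega\Delta\Omega_0\Subset\operatorname{int}M$, since a minimizing sequence may drift onto $\partial_\pm M$. The strong maximum principle can only be used once a regular minimizer exists. To complete your route you must:
\begin{itemize}
\item minimize in a class closed under $L^1$-limits, where $\partial\Omega$ may contain pieces of $\partial_\pm M$ counted in the weighted perimeter;
\item prove regularity at the contact set, which is a one-sided obstacle problem for weighted perimeter;
\item use the strong maximum principle to show each component is either interior or equal to $\partial_+M$ or $\partial_-M$.
\end{itemize}
In the boundary case, one-sided minimality forces $H+\gamma u^{-1}u_\nu=\eta$ there. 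It still yields $\lambda_1(L_\Sigma)\geqslant 0$, because the second variation is even in $\phi$ and $L_\Sigma$ is self-adjoint for $u^\gamma\,\mathrm{d}\mathcal{H}^{n-1}$ with a positive principal eigenfunction.

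Also, for $n\geqslant 4$ the leaves $\Sigma_t$ are not yet known to be stable, so ``redoing Step 2'' on them really means invoking spectral Geroch on each $\Sigma_t$. Since $\tilde H'(t)=L_{\Sigma_t}\phi_t$ is constant on $\Sigma_t$, it has the sign of $\lambda_1(L_{\Sigma_t})$. Geroch together with \eqref{eq Z W in sc} gives $\lambda_1(L_{\Sigma_t})\leqslant C|\tilde H(t)|$, so Gr\"onwall yields $\tilde H\leqslant 0$ for $t>0$ and $\tilde H\geqslant 0$ for $t<0$. This is the role of the Claim in the proof of Theorem \ref{thm non-exist strict barrier}.
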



As for the spectral Ricci curvatures, we have the following.

\begin{theorem}
  \label{thm acg ric2}Let $0 \leqslant \gamma < 3 + \tfrac{1}{n - 2}$,
  $\Lambda \leqslant 0$, $\eta = \sqrt{- \Lambda} / \sqrt{1 - \gamma / 4}$,
  $(M, g)$ be an \( n \)-dimensional torical
  band and $u > 0$ such that
  \begin{equation}
    - \gamma u^{- 1} \Delta_g u + (n - 1) \ensuremath{\operatorname{Ric}}_{g}
    \geqslant \Lambda, \label{neg eigen ric n-1}
  \end{equation}
  in \( M \)
  and
  \[ H_{\partial_+M} + \gamma u^{- 1} u_{\nu_+} \geqslant \eta \text{ along } \partial_+ M
     =\{1\} \times T^{n-1}, \]
  and
  \[ H_{\partial_-M} + \gamma u^{- 1} u_{\nu_-} \leqslant \eta \text{ along } \partial_- M
     =\{- 1\} \times T^{n-1} . \]
  Then $(M, g)$ is foliated by flat $(n - 1)$-torus with $H + \gamma u^{- 1}
  u_{\nu} = \eta$ and the equality in \eqref{neg eigen ric n-1} is achieved
  everywhere. In particular, if $n = 3$, then $(M, g)$ is isometric to
  \[ ([t_-, t_+] \times T^2, \mathrm{d} t^2 {+ e^{\eta (1 - \gamma / 2) t}} 
     g_{T^2}) \]
  for some $t_- < t_+$ with $u$ given by a constant multiple of $e^{\eta t /
  2}$.
\end{theorem}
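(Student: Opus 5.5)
We follow the strategy used for Theorem \ref{thm weighted fcs} and Theorem \ref{thm acg}: construct a minimizer of a brane-type functional, show it is infinitesimally rigid, foliate around it, and propagate minimality along the foliation.

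\textbf{Step 1: the functional and its minimizer.} Consider
\[
\mathcal{F}(\Omega)=\int_{\partial^*\Omega} u^{\gamma}\,\mathrm{d}\mathcal{H}^{n-1}-\eta\int_{\Omega}u^{\gamma}\,\mathrm{d}\mathcal{H}^{n},
\]
over Caccioppoli sets $\Omega$ containing a neighbourhood of $\partial_-M$ and avoiding $\partial_+M$. Critical points have weighted mean curvature $H+\gamma u^{-1}u_\nu=\eta$. The boundary inequalities give barriers; if either is strict we get a minimizer $\Sigma$ in the interior, and the equality case is handled separately. Since $\partial_-M$ is a torus and $M=T^{n-1}\times[-1,1]$, some component of $\Sigma$ is homologous to $\partial_-M$ and therefore has nonzero degree onto $T^{n-1}$. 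Regularity holds for $n\le 7$, and in general we assume this or use the standard generic-regularity arguments.

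\textbf{Step 2: stability and rigidity of $\Sigma$.} Write $v=u|_\Sigma$ and use the test function $\varphi=v^{-\gamma/2}\psi$ in the weighted second variation. Then:
\begin{itemize}
\item Insert the curvature hypothesis via $\operatorname{Ric}(\nu,\nu)\ge \operatorname{Ric}_g$. In the form $(n-1)\operatorname{Ric}_g\ge \Lambda+\gamma u^{-1}\Delta u$ the Ricci term only controls $\operatorname{Ric}(\nu,\nu)$. So use $\operatorname{Ric}(\nu,\nu)\ge \tfrac{1}{n-1}\big(\Lambda+\gamma u^{-1}\Delta u\big)$, or better, average the lowest eigenvalue bound.
\item Split $\Delta u=\Delta_\Sigma u+H u_\nu+\nabla^2u(\nu,\nu)$ and use the first-variation identity $H=\eta-\gamma u^{-1}u_\nu$, with the $Z,W$ variables of Subsection \ref{sec rewrite}.
\item Collect $|A|^2\ge H^2/(n-1)$ and complete squares in $u_\nu$.
\end{itemize}
This yields an inequality of the form
\[
0\le \int_\Sigma |\nabla\psi|^2 + c_\gamma\,|\nabla\log v|^2\psi^2-\Big(\Lambda+(1-\tfrac{\gamma}{4})\eta^2\Big)\psi^2 -(\text{nonneg. terms})\psi^2 .
\]
The constant $\eta=\sqrt{-\Lambda}/\sqrt{1-\gamma/4}$ makes the zeroth-order term vanish. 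The restriction $\gamma<3+\tfrac1{n-2}$ is exactly what is needed for the gradient term to be absorbed with a favourable sign, via a Young's inequality with the cross term $\langle\nabla\psi,\nabla\log v\rangle$.

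\textbf{Step 3: conclusions on $\Sigma$.} Take $\psi=1$ and use that a torus-degree hypersurface admits no metric of positive scalar curvature, or more simply the vanishing of all terms. This forces the following:
\begin{itemize}
\item $v$ is constant on $\Sigma$ and $\Sigma$ is umbilic;
\item equality holds in \eqref{neg eigen ric n-1} along $\Sigma$, and $\operatorname{Ric}(\nu,\nu)$ equals the least Ricci curvature;
\item the Jacobi operator has first eigenvalue zero with constant eigenfunction.
\end{itemize}
For flatness of $\Sigma$, use the Gauss equation together with the equality in the Ricci bound to see that $\Sigma$ has nonnegative Ricci curvature. A torus-degree manifold with $\operatorname{Ric}\ge0$ is flat by Bochner/Cheeger–Gromoll applied to the degree-one map to $T^{n-1}$.

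\textbf{Step 4: foliation and propagation.} Use Theorem \ref{lm foliation} to build a foliation $\Sigma_t$ near $\Sigma$ of constant weighted mean curvature $\eta_t$. Repeating the Step 2 computation with $\eta_t$ in place of $\eta$ gives an ODE inequality showing $(\eta_t-\eta)'\le C|\eta_t-\eta|$, with the sign making $\eta_t\le \eta$ on the outside. The first-variation comparison then shows $\mathcal{F}(\Omega_t)\le\mathcal{F}(\Omega)$, so each $\Sigma_t$ is also a minimizer, hence rigid by Step 3. A continuity argument extends the foliation to all of $M$.

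\textbf{Step 5: the case $n=3$.} Each leaf has constant $u$, is totally umbilic with $H$ constant, and the lapse is constant. Hence $g=\mathrm{d}t^2+e^{2f(t)}g_{T^2}$ and $u=u(t)$, with $H=2f'$ and $2f'+\gamma u'/u=\eta$. Equality in the Ricci condition, together with equality in the completed square, gives $u'/u=\eta/2$ and $2f'=\eta(1-\gamma/2)$. This is the stated metric, and $u$ is a constant multiple of $e^{\eta t/2}$.

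\textbf{Main obstacle.} The hard step is Step 2: the algebraic completion of squares that yields exactly the constant $1-\gamma/4$ and the threshold $3+\tfrac1{n-2}$. This needs careful bookkeeping of the $\nabla^2u(\nu,\nu)$ term against only a Ricci (not scalar) bound.
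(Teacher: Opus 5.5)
Your overall architecture (minimizing warped $\eta$-bubble, infinitesimal rigidity, foliation, propagation, ODE in dimension $3$) is the paper's. But the core step, Step 2, does not work, and the later steps inherit the error.

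\textbf{The gap in Step 2.} Pairing the hypothesis with $\operatorname{Ric}(\nu,\nu)\geqslant\operatorname{Ric}_g\geqslant\tfrac{1}{n-1}(\Lambda+\gamma u^{-1}\Delta_g u)$ leaves a residual $-\tfrac{n-2}{n-1}\gamma u^{-1}\Delta_g u$ in $Z$. That residual contains $\nabla^2u(\nu,\nu)$, which has no sign and cannot be integrated away on $\Sigma$. You also get the wrong constant $\Lambda/(n-1)$, and no completion of squares repairs either problem.

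Your parenthetical about averaging points the right way, but you have to carry it out. The factor $n-1$ controls the $n-1$ \emph{tangential} Ricci curvatures through the Gauss equation $\operatorname{Ric}(\nu,\nu)+|A|^2=\sum_{i\leqslant n-1}\operatorname{Ric}(e_i,e_i)+H^2-R_\Sigma$ (case III of Subsection \ref{sec rewrite}). This absorbs all of $|A|^2$ and, with $H=\eta-\gamma w_\nu$, gives $Z=-R_\Sigma+\big(-\gamma u^{-1}\Delta_g u+(n-1)\operatorname{Ric}_g-\Lambda\big)+\big(\sum_i\operatorname{Ric}(e_i,e_i)-(n-1)\operatorname{Ric}_g\big)+\gamma(w_\nu-\tfrac{\eta}{2})^2$.

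Hence the stability inequality necessarily contains $+\int_\Sigma R_\Sigma\psi^2$. It only says that $-\tfrac{4}{4-\gamma}\Delta_\Sigma+R_\Sigma-(Z+R_\Sigma)$ has nonnegative first eigenvalue $\lambda_1$ with $Z+R_\Sigma\geqslant0$, and $\psi=1$ yields nothing when $n\geqslant4$. Your displayed inequality, with no intrinsic curvature of $\Sigma$ in it, is not derivable from the hypothesis.

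The paper instead takes the first eigenfunction $v$ and $\hat g=v^{4\kappa/(n-3)}g|_\Sigma$ with $\kappa=\tfrac{n-3}{(n-2)(4-\gamma)}$. Then $v^{\kappa(n+1)/(n-3)}R_{\hat g}=v^{\kappa}\big(\lambda_1+(Z+R_\Sigma)+\tfrac{4(n-2)}{n-3}\kappa(1-\kappa)v^{-2}|\nabla_\Sigma v|^2\big)\geqslant0$. The Geroch conjecture with its rigidity then forces $\lambda_1=0$, $\nabla_\Sigma v=0$, $Z+R_\Sigma=0$ and $g|_\Sigma$ flat, after which $\psi=1$ gives $\nabla_\Sigma w=0$.

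The range of $\gamma$ is exactly $\kappa<1$, i.e.\ $\tfrac{4}{4-\gamma}<\tfrac{4(n-2)}{n-3}$, i.e.\ $\gamma<3+\tfrac{1}{n-2}$. This is a comparison with the conformal Laplacian of the $(n-1)$-dimensional leaf, not a Young's-inequality threshold: completing the square in $\nabla_\Sigma w$ (Lemma \ref{lm rewrite prelim}) works for every $\gamma<4$. For $n=3$, Gauss--Bonnet replaces the conformal step.

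\textbf{Consequences for the later steps.}
\begin{enumerate}
\item Umbilicity does not come out of stability here. No $|A|^2-\tfrac{1}{n-1}H^2$ term survives the Gauss rewrite, and for $n\geqslant4$ the theorem does not assert umbilicity.
\item Flatness of the leaves comes from torus rigidity applied to $R_{\hat g}\geqslant0$, not from $\operatorname{Ric}_\Sigma\geqslant0$. Deducing $\operatorname{Ric}_\Sigma\geqslant0$ from the Gauss equation and equality in the hypothesis would require control of the ambient sectional curvatures $R(e_i,\nu,\nu,e_i)$, which you do not have.
\item In Step 4 the leaves $\Sigma_t$ are not known to be stable, so ``repeating Step 2'' is not available. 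For $n\geqslant4$ you must produce, for each $t$, a positive test function making the integrated right-hand side of the $\tilde H'$ inequality (which contains $+R_{\Sigma_t}$) nonpositive. This is again done by contradiction with the Geroch conjecture through the same $\kappa$-conformal change.
\item In Step 5 you assume the leaves are umbilic; the paper derives it. $Z+R_\Sigma=0$ forces every tangential Ricci curvature to equal $\operatorname{Ric}_g$. Hence the metric is locally a doubly warped product $\mathrm{d}t^2+\phi^2\mathrm{d}s_1^2+\varphi^2\mathrm{d}s_2^2$ (by \cite[Subsection 3.4]{chai-band-2025}). Then $\operatorname{Ric}(\partial_t,\partial_t)\geqslant\operatorname{Ric}(e_i,e_i)$ gives $G'+F^2\leqslant0$ with $G=H=(1-\tfrac{\gamma}{2})\eta$ constant, hence $F=0$ and $\phi'/\phi=\varphi'/\varphi$.
\item Lemma \ref{lem-warped-bubble} needs strict barriers on both sides, not one strict inequality. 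When the boundary inequalities are not strict, the paper first rules out strict barriers and then runs the maximal-foliation argument from $\partial_\pm M$, as in the proof of Theorem \ref{thm acg}.
\end{enumerate}
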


\begin{remark}
  \label{rk acg ric2}For $n \geqslant 4$, $u = e^{\eta t / 2}$ and the warped
  product metric $g = \mathrm{d} t^2 + e^{2 (1 - \gamma / 2) / (n - 1) t}
  g_{\mathbb{T}^{n - 1}}$ satisfy the conditions in Theorem \ref{thm acg
  ric2}, however, we do not know whether there are other metrics.
\end{remark}
\begin{theorem}
\label{thm acg ric1}Let $0 \leqslant \gamma < \tfrac{n - 1}{n - 2}$, $\Gamma =
\tfrac{4 - (n - 1) \gamma}{4 (n - 1 - (n - 2) \gamma)}$, $\Lambda$ be a number
such that $\Gamma \Lambda < 0$ or $\Gamma = \Lambda = 0$. Let $\eta$ be the
constant $\sqrt{- \Lambda / \Gamma}$ when $\Gamma \Lambda < 0$ and $0$ when
$\Gamma = \Lambda = 0$. Let $\beta = - \tfrac{n - 3}{2 (n - 1 - (n - 2)
\gamma)}\eta$ and $\alpha = \tfrac{2 - \gamma}{2 (n - 1 - (n - 2) \gamma)}\eta$. If
($M, g$) is a band with
\begin{enumerate}[(a)]
  \item $- \gamma u^{- 1} \Delta_g u + \mathrm{Ric}_g
    \geqslant \Lambda$ in \( M \),\\
  \item \( H_{\partial_+M} + \gamma u^{- 1} u_{\nu_+} \geqslant \eta \text{ }  \text{along }
     \partial_{+} M , \)
  and
  \( H_{\partial_-M} + \gamma u^{- 1} u_{\nu_-} \leqslant \eta \text{ }  \text{along }
     \partial_- M  \),
    \end{enumerate}
    then ($M, g$) is isometric to ($[t_-, t_+] \times S, \mathrm{d} t^2 + e^{2 \alpha
t} g_S$) for some $t_- < t_+$ and ($S, g_S$) with $\mathrm{Ric}_{g_S}
\geqslant 0$ and $u$ is a constant multiple of $e^{\beta t}$.
  \end{theorem}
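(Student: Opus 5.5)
We follow the strategy described after Theorem \ref{thm weighted fcs}, now for the spectral Ricci condition (a). First we minimize a functional over hypersurfaces homologous to $\partial_- M$. A natural choice is the warped $h$-bubble functional
\[
\mathcal{A}(\Omega)=\int_{\partial^*\Omega} u^{\gamma}\,\mathrm{d}\mathcal{H}^{n-1}-\int_{\Omega} \eta\, u^{\gamma}\,\mathrm{d}\mathcal{H}^{n},
\]
taken over Caccioppoli sets $\Omega$ containing $\partial_- M$. The barrier conditions (b) prevent the minimizer from touching $\partial_\pm M$; if the boundary inequalities are equalities, the minimizer may be a boundary component itself, which is harmless. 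This yields a smooth (in low dimensions, or up to a small singular set handled by standard means) hypersurface $\Sigma$ with weighted mean curvature $H+\gamma u^{-1}u_\nu=\eta$. The next step is the stability inequality with test function $\varphi$:
\[
0\le \int_\Sigma u^\gamma\Big(|\nabla\varphi|^2-\big(|A|^2+\mathrm{Ric}(\nu,\nu)-\gamma u^{-2}u_\nu^2+\gamma u^{-1}\nabla^2u(\nu,\nu)\big)\varphi^2\Big) - \ldots
\]
We then use the $Z,W$ rewriting of Subsection \ref{sec rewrite}. We substitute $\nabla^2u(\nu,\nu)=\Delta_g u-\Delta_\Sigma u-Hu_\nu$ and use condition (a) in the form $\mathrm{Ric}(\nu,\nu)-\gamma u^{-1}\Delta_g u\ge\Lambda$. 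Crucially, only $\mathrm{Ric}(\nu,\nu)$ appears, so no Gauss equation and no intrinsic curvature of $\Sigma$ enter. This is why the conclusion holds for arbitrary cross sections $S$ and needs no torus topology.

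The algebraic core comes next. We choose $\varphi=u^{-\gamma/2}\psi$, or more simply test with $\psi=1$ on closed $\Sigma$ after integrating by parts the $\Delta_\Sigma u$ term. We then split $|A|^2\ge H^2/(n-1)$, write $H=\eta-\gamma u^{-1}u_\nu$, and complete the square in $u^{-1}u_\nu$ and in the tangential gradient $u^{-1}\nabla_\Sigma u$. The requirement $\gamma<\tfrac{n-1}{n-2}$ should be exactly what makes the quadratic form in these quantities nonnegative definite, up to a residual term. The residual should be $\Gamma\eta^2+\Lambda$, so the integrand is $\ge \Gamma\eta^2+\Lambda=0$ by the choice of $\eta$. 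Stability then forces equality throughout: $\Sigma$ is umbilic, $\nabla_\Sigma u=0$, $u_\nu/u$ equals a specific multiple of $\eta$ (this produces $\beta$), $H$ is constant (this produces $(n-1)\alpha$), and equality holds in (a) for the direction $\nu$.

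We then run the foliation argument. By Theorem \ref{lm foliation} we build a foliation $\{\Sigma_t\}$ near $\Sigma$ with constant weighted mean curvature $\eta_t$. Differentiating in $t$ and using the same inequality shows $\eta_t-\eta$ has the sign that makes $\mathcal{A}(\Sigma_t)\le\mathcal{A}(\Sigma)$. Hence each leaf is also a minimizer and is rigid as above. By connectedness this propagates across the whole band between $\partial_-M$ and $\partial_+M$.

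Finally we recover the metric. The leaves are umbilic with $A=\alpha g_{\Sigma_t}$ and are level sets of a distance function $t$ with lapse $1$; the lapse is constant because the leaves are parallel, which follows from the equality case in the Jacobi operator forcing a constant lapse. Hence $g=\mathrm{d}t^2+e^{2\alpha t}g_S$, and $u=ce^{\beta t}$ since $\nabla_\Sigma u=0$ and $\partial_t\log u=\beta$. The warped product formula gives $\mathrm{Ric}_g(X,X)=e^{-2\alpha t}\mathrm{Ric}_{g_S}-(n-1)\alpha^2$ on tangential directions. Comparing this with the condition that the least eigenvalue, i.e.\ $\mathrm{Ric}_g$ in (a), is $\ge\Lambda+\gamma u^{-1}\Delta u$, which is attained in the $\partial_t$ direction, yields $\mathrm{Ric}_{g_S}\ge0$.

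The main obstacle is the second step: getting the exact constants $\Gamma,\alpha,\beta$ from the completion of squares. This requires also handling the $\Delta_\Sigma u$ and $|\nabla_\Sigma u|^2$ terms optimally, with a test function that is a power of $u$. We would first try $\varphi=u^{s}$ and optimize over $s$ to reach the threshold $\gamma<\tfrac{n-1}{n-2}$.
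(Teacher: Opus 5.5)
Your outline follows the paper's argument for most of its length. That includes the warped $\eta$-bubble, the rewrite \textbf{I} of Subsection \ref{sec rewrite} tested with $\psi=1$ (i.e.\ $\phi=u^{-\gamma/2}$) to get $Z=W=0$ and $\nabla_\Sigma w=0$, and the foliation of Lemma \ref{lm foliation} with the ODE sign argument showing nearby leaves are again minimizers. It also includes the final warped-product computation, where your harmonic-lapse justification is more explicit than the paper's.

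The genuine gap is the first step. The inequalities in (b) are non-strict, so your claim that they keep the minimizer off $\partial_\pm M$ is false; Lemma \ref{lem-warped-bubble} requires strict inequalities. Suppose $H+\gamma u^{-1}u_\nu\le\eta$ on $\partial_-M$ with equality only on part of it. Then a minimizer may a priori touch $\partial_-M$ partially. There it satisfies only a one-sided obstacle condition and is not known to be a smooth warped $\eta$-hypersurface, so your stability computation does not apply. ``Equality everywhere'' is not the only alternative to ``strict''. To repair your route you would need obstacle regularity plus a strong maximum principle. These must show that each component of the minimizer either lies in the interior or coincides with a component of $\partial_\pm M$, and that such a boundary component is stable in the sense of Definition \ref{def stable warp bdry}.

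The paper avoids this entirely. It first proves Theorem \ref{thm non-exist strict barrier acg ric}: with strict barriers the rigidity propagates until a leaf touches $\partial_-M$, which the strong maximum principle forbids. Then, as in the proof of Theorem \ref{thm acg}, with $\tilde H=H+\gamma u^{-1}u_\nu-\eta$, it works near each boundary. Either it manufactures a strict barrier, using Lemma \ref{lm flow to barrier} when $\tilde H\lneqq 0$ and Lemma \ref{lm non stable to barrier} when $\tilde H\equiv 0$ but unstable. Or it grows a maximal foliation by leaves with $\tilde H=0$. If the two maximal foliations do not fill $M$, strict barriers $\Sigma_\pm$ exist, contradicting the non-existence theorem. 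The same strong maximum principle is what you need, and did not state, to identify the last leaf of your propagation with $\partial_\pm M$.

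Two smaller corrections. First, your displayed stability integrand has the signs of $\gamma u^{-2}u_\nu^2$ and $\gamma u^{-1}\nabla^2u(\nu,\nu)$ reversed. By \eqref{eq first variation H tilde} the subtracted term is $|A|^2+\mathrm{Ric}(\nu,\nu)+\gamma u^{-2}u_\nu^2-\gamma u^{-1}\nabla^2u(\nu,\nu)$. This is what makes $-\gamma u^{-1}\Delta_g u+\mathrm{Ric}(\nu,\nu)$ appear.

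Second, the threshold $\gamma<\tfrac{n-1}{n-2}$ has nothing to do with optimizing a test function $u^s$. The tangential terms are disposed of by $\psi=1$ in Lemma \ref{lm rewrite prelim}, which only needs $\gamma<4$. The threshold is the positivity of the coefficient in the pointwise identity (with $w=\log u$, $H=\eta-\gamma w_\nu$)
\[ \tfrac{1}{n-1}H^2+\gamma Hw_\nu+\gamma w_\nu^2=\Gamma\eta^2+\gamma\bigl(1-\tfrac{n-2}{n-1}\gamma\bigr)\bigl(w_\nu+\tfrac{n-3}{2(n-1-(n-2)\gamma)}\eta\bigr)^2 . \]
This is the computation you flagged as the main obstacle. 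It gives $W\ge\Gamma\eta^2+\Lambda=0$, and in the equality case $w_\nu=\beta$ and $H=(n-1)\alpha$.
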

\comment{
\begin{theorem}
  \label{thm acg ric1}Let $0 \leqslant \gamma < \tfrac{4}{n - 1}$, $\Lambda
  \leqslant 0$,
  \[ \alpha = \tfrac{2 - \gamma}{\sqrt{4 - (n - 1) \gamma}}, \text{ } \beta =
     \tfrac{(3 - n) \gamma}{\sqrt{n - 1 - (n - 2) \gamma} \sqrt{4 - (n - 1)
     \gamma}}, \text{ } \eta = \tfrac{2 \sqrt{n - 1 - (n - 2) \gamma}}{\sqrt{4
     - (n - 1) \gamma}} . \]
  Let $(M, g)$ be a band with $- \gamma u^{- 1} \Delta_g u
  +\ensuremath{\operatorname{Ric}}_g \geqslant \Lambda$, $H_{\partial_+ M} +
  \gamma u^{- 1} u_{\nu} \geqslant \eta$ (resp. $\leqslant \eta$) along
  $\partial_+ M$ (resp. $\partial_- M$), then $(M, g)$ is isometric to $([t_-,
  t_+] \times S, \mathrm{d} t^2 + e^{2 \alpha t} g_S)$ for some $t_- < t_+$
  and $(S, g_S)$ with $\ensuremath{\operatorname{Ric}}_{g_S} \geqslant 0$ and
  $u$ is a constant multiple of $e^{\beta t}$.
\end{theorem}
}
\subsection{Band width estimates}\label{subsec intro bw}

One of the basic results of the band width is that the band width of a torical
band is bounded from above due to the effect
of the positive scalar curvature, see \cite{G2018}. Gromov's approach is by
considering hypersurfaces of prescribed mean curvature \( h \) where \( h \)
is a Lipschitz function related to the band width.
\comment{
To facilitate the description of the band width estimates, we introduce some
notations: for a given positive constant $\Gamma$, we fix $\sigma$ to be the
sign of $\Lambda$,
\begin{equation}
  B_{\Lambda, \Gamma} = \left\{\begin{array}{ll}
    \sqrt{| \Lambda | \Gamma} & \Lambda \neq 0,\\
    1, & \Lambda = 0,
  \end{array}\right. \label{eq B}
\end{equation}
and
\begin{equation}
  \label{eq eta gamma lambda} \eta_{\Lambda, \Gamma} (t) =
  \left\{\begin{array}{ll}
    \sqrt{- \Lambda / \Gamma} \coth (\sqrt{- \Lambda \Gamma} t), & \Lambda <
    0,\\
    \tfrac{1}{\Gamma t}, & \Lambda = 0,\\
    \sqrt{\Lambda / \Gamma} \cot (\sqrt{\Lambda \Gamma} t), & \Lambda > 0.
  \end{array}\right.
\end{equation}
It is clear that $\eta = \eta_{\Gamma, \Lambda}$ solves the ODE
\begin{equation}
  \Gamma \eta^2 + \eta' + \Lambda = 0 \text{ and } \eta' < 0. \label{eq
  general ode}
\end{equation}
We also define $\ensuremath{\operatorname{sn}}_{\sigma}$ by the following
\[ \ensuremath{\operatorname{sn}}_{\sigma} (t) = \left\{\begin{array}{ll}
     \sin t, & \text{ if } \sigma> 0,\\
     t, & \text{ if } \sigma = 0,\\
     \sinh t, & \text{ if } \sigma < 0.
   \end{array}\right. \]
The following spectral version of the positive scalar curvature can also put a
constraint on the band width, see {\cite{chai-band-2025}}. Here, we give a slight
generalization of {\cite{chai-band-2025}} by allowing negative and zero spectral
scalar curvature bounds.
}
The proper generalization of Gromov's \( h \)-bubble is the notion of a \textit{warped} \( h \)-\textit{bubble} which we will use to show several band
width estimates under spectral curvature bounds.

To facilitate the description of the band width estimates, we introduce some
notations. Let $\Gamma$ and $\Lambda$ be two constants and we are concerned
with the ODE
\begin{equation}
  \Gamma \eta^2 + \eta' + \Lambda = 0 \label{eq general ode}
\end{equation}
such that the solution $\eta$ satisfies $\eta' < 0$. To ensure $\eta' < 0$, at least one of $\Gamma$ and
$\Lambda$ should be positive. Indeed, the solution to \eqref{eq general ode}
is given by the following
\begin{equation}
  \label{eq eta gamma lambda} \eta(t):=\eta_{\Lambda, \Gamma} (t) := \left\{
  \begin{array}{ll}
    \sqrt{- \Lambda / \Gamma} \coth (\sqrt{- \Lambda \Gamma} t), & \Gamma > 0,
    \text{ } \Lambda < 0,\\
    \frac{1}{\Gamma t}, & \Gamma > 0, \text{ } \Lambda = 0,\\
    \sqrt{\Lambda / \Gamma} \cot (\sqrt{\Lambda \Gamma} t), & \Gamma > 0,
    \text{ } \Lambda > 0,\\
    - \Lambda t, & \Gamma = 0, \text{ } \Lambda > 0,\\
    - \sqrt{- \Lambda / \Gamma} + \frac{2 \sqrt{- \Lambda / \Gamma}}{1 + \exp (2
    \sqrt{- \Lambda \Gamma} t)} & \Gamma < 0, \text{ } \Lambda > 0.
  \end{array} \right.
\end{equation}
Evidently, $\eta_{\Lambda, \Gamma}$ is only well defined on the interval
$I_{\Lambda, \Gamma}$ given by
\begin{equation}
  \label{interval} I_{\Lambda, \Gamma} := \left\{ \begin{array}{ll}
    (0, \infty), & \Gamma > 0, \text{ } \Lambda \leqslant 0 ;\\
    (0, \pi / \sqrt{\Lambda \Gamma}), & \Gamma > 0, \text{ } \Lambda > 0,\\
    (- \infty, + \infty), & \Gamma \leqslant 0, \text{ } \Lambda > 0.
  \end{array} \right.
\end{equation}
Now we give a slight generalization of the band width estimate under a
positive spectral scalar curvature bound in {\cite{chai-band-2025}} by
allowing negative and zero spectral scalar curvature bounds.
\comment{
\begin{theorem}
  \label{thm bw spec scalar}Let $\Lambda$ be a constant, $0 \leqslant \gamma <
  \tfrac{2 n}{n - 1}$,
  \[ \alpha = \tfrac{2 (2 - \gamma)}{2 n - (n - 1) \gamma}, \text{ } \beta =
     \tfrac{2}{2 n - (n - 1) \gamma}, \text{ } \Gamma = \tfrac{2 n - (n - 1)
     \gamma}{4 (n - 1) - (n - 2) \gamma}, \]
  $B = B_{\Lambda, \Gamma}$ and $\eta = \eta_{\Lambda, \Gamma}$. Let $(M^n,
  g)$ be a torical band such that
\begin{enumerate}[(a)]
    \item there exists a positive function $u$ with $- \gamma u^{- 1} \Delta_g
    u + \tfrac{1}{2} R_g \geqslant \Lambda$;
    
    \item $H_{\partial_+ M} + \gamma u^{- 1} u_{\nu_+} \geqslant \eta (t_+) $
    on $\partial_+ M$, $H_{\partial_- M} + \gamma u^{- 1} u_{\nu_-} \leqslant
    \eta (t_-)$ for some $0 < t_- < t_+$, and if $\Lambda > 0$, we also
    require that $t_+ < B^{- 1} \pi$,
  \end{enumerate}
  then
  \[ \operatorname{width} (M, g) \leqslant t_+ - t_- . \]
  Equality occurs if and only if $(M, g)$ is isometric to the model
  \[ ([t_-, t_+] \times T^{n - 1}, \mathrm{d} t^2 + \phi (t)^2
     g_{\mathbb{T}^{n - 1}}) \]
  where $\phi (t) =\ensuremath{\operatorname{sn}}_{\sigma} (B t)^{\alpha}$ for
  some flat metric $g_{\mathbb{T}^{n - 1}}$ on $T^{n - 1}$ and $u$ is a
  constant multiple of $\ensuremath{\operatorname{sn}}_{\sigma} (B
  t)^{\beta}$.
\end{theorem}
}
\begin{theorem}
  \label{thm bw spec scalar}Let $\Lambda$ be a constant, $0 \leqslant \gamma <
  \tfrac{2 (n - 1)}{n - 2}$, $\Gamma = \tfrac{2 n - (n - 1) \gamma}{4 (n - 1)
    - 2(n - 2) \gamma}$. Assume that at least one of \( \Gamma \) and \( \Lambda \) is positive,
 and let $t_- < t_+$ be two
  numbers such that $[t_-, t_+] \subset I_{\Lambda, \Gamma}$. Let $(M^n, g)$
  be a torical band such that
  \begin{enumerate}[(a)]
    \item there exists a positive function $u$ with $- \gamma u^{- 1} \Delta_g
    u + \tfrac{1}{2} R_g \geqslant \Lambda$,
    
    \item and $H_{\partial_+ M} + \gamma u^{- 1} u_{\nu_+} \geqslant \eta
    (t_+)$ on $\partial_+ M$, $H_{\partial_- M} + \gamma u^{- 1} u_{\nu_-}
    \leqslant \eta (t_-)$ on $\partial_- M$,
  \end{enumerate}
  then
  \[ \operatorname{width} (M, g) \leqslant t_+ - t_- . \]
  Equality occurs if and only if $(M, g)$ is isometric to the model
  \[ ([t_-, t_+] \times T^{n - 1}, \mathrm{d} t^2 + \phi (t)^2
     g_{T^{n - 1}}) \]
  where $\phi (t) = \exp (\tfrac{2 - \gamma}{2 (n - 1) - (n - 2) \gamma}
  \int^t \eta)$ and $u$ is a constant multiple of $\exp (\tfrac{1}{2 (n - 1) - (n - 2)
  \gamma} \int^t \eta)$.
\end{theorem}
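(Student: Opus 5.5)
We argue by contradiction with a warped $h$-bubble. Suppose $\operatorname{width}(M,g) > t_+ - t_-$. Let $\rho$ be a smoothing of the distance function to $\partial_- M$, with $|\nabla\rho|<1$, and rescale so that $\rho = t_-$ near $\partial_- M$ and $\rho = t_+$ near $\partial_+ M$. The strict width inequality leaves room for this with $\operatorname{Lip}\rho<1$. Set $h = \eta(\rho)$, which is defined because $[t_-,t_+]\subset I_{\Lambda,\Gamma}$, and minimize
\[ \mathcal{A}(\Omega) = \int_{\partial^*\Omega} u^{\gamma}\,\mathrm{d}\mathcal{H}^{n-1} - \int_M (\chi_\Omega - \chi_{\Omega_0})\, h\, u^{\gamma}\,\mathrm{d}\mathcal{H}^n \]
among Caccioppoli sets $\Omega$ containing a neighborhood of $\partial_- M$.

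The boundary conditions (b) act as barriers. On $\partial_+ M$ we have $H + \gamma u^{-1}u_\nu \geqslant \eta(t_+) = h$, and on $\partial_- M$ the reverse inequality holds. After a standard perturbation making these inequalities strict, the minimizer avoids $\partial M$. Existence and regularity of the minimizer then follow as in Section \ref{sec warp}; in dimension $n\geqslant 8$ one uses the usual singular set or dimension reduction caveat. The reduced boundary $\Sigma$ satisfies $H + \gamma u^{-1}u_\nu = h$. Since $\Sigma$ separates the two boundaries, it contains a component with nonzero degree onto $T^{n-1}$.

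Next comes the second variation. With test function $\varphi$ on $\Sigma$, stability gives
\[ \int_\Sigma u^\gamma\bigl(|\nabla\varphi|^2 - (|A|^2 + \operatorname{Ric}(\nu,\nu) + \gamma u^{-1}\nabla^2u(\nu,\nu) - \gamma u^{-2}u_\nu^2 + \partial_\nu h)\varphi^2\bigr) \geqslant 0. \]
We substitute $\nabla^2u(\nu,\nu) = \Delta u - \Delta_\Sigma u - Hu_\nu$ and use the Gauss equation $\operatorname{Ric}(\nu,\nu)+|A|^2 = \tfrac12(R_g - R_\Sigma + H^2 + |A|^2)$. Then we apply (a), the relation $H = h - \gamma u^{-1}u_\nu$, and $|A|^2\geqslant H^2/(n-1)$, and we choose $\varphi = u^{-\gamma/2}\psi$ (or an equivalent conformal substitution). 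Completing squares in the variables $Z,W$ of Subsection \ref{sec rewrite} should yield an operator inequality of the form
\[ -\Delta_\Sigma + \tfrac12 R_\Sigma - \bigl(\Gamma h^2 + \Lambda - |\nabla h|\bigr) \text{ nonnegative, up to weights.} \]
The hypothesis $\gamma < \tfrac{2(n-1)}{n-2}$ makes the quadratic form in $(u_\nu/u, H)$ positive definite, and it is exactly what produces the constant $\Gamma$. Because $\Gamma\eta^2 + \eta' + \Lambda = 0$ and $|\nabla h| \leqslant |\eta'(\rho)|$ with strict inequality from $\operatorname{Lip}\rho<1$, the potential term is strictly positive. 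This gives a metric of positive (spectral) scalar curvature on a manifold dominating $T^{n-1}$, contradicting the Geroch conjecture / Schoen–Yau for the spectral version with the operator $-\Delta+\tfrac12 R$ (constant $\leqslant 1 < \tfrac{n-1}{n-2}$-type, which is fine). This contradiction proves the inequality.

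For the equality case, take $\rho$ to be the distance itself and run the strategy used for Theorem \ref{thm weighted fcs}. Every inequality used above must be an equality on a minimizer, so $\Sigma$ is umbilic, $\Sigma$ is flat (by Gauss–Bonnet, or by the torus rigidity in higher dimensions), $u_\nu$ is determined, and $\nabla u$ is normal. Using Theorem \ref{lm foliation}, build a foliation $\Sigma_t$ with $H+\gamma u^{-1}u_\nu = \eta(t)$ near $\Sigma$. The comparison ODE forces each leaf to also be a minimizer, so the foliation is by equidistant flat tori. Integrating the resulting ODEs $\phi'/\phi = \tfrac{2-\gamma}{2(n-1)-(n-2)\gamma}\eta$ and $u'/u = \tfrac{1}{2(n-1)-(n-2)\gamma}\eta$ recovers the model metric. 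The main obstacle is the algebra of completing squares in the stability inequality so that the sharp constant $\Gamma$ appears and equality characterizes the model; I would do this first in the $Z,W$ notation, mirroring the proof of Theorem \ref{thm acg}.
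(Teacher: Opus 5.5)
\textbf{Inequality.} This part of your argument works, and it is a slightly different route from the paper's. You assume $\operatorname{width}(M,g) > t_+ - t_-$ and use the slack to get strict barriers. Concretely, let $\rho$ run over $[t_- - \delta, t_+ + \delta] \subset I_{\Lambda,\Gamma}$, which is possible because $I_{\Lambda,\Gamma}$ is open, and keep $\operatorname{Lip}\rho<1$. Lemma \ref{lem-warped-bubble} then gives an interior bubble whose potential satisfies $\Gamma h^2 + h_\nu + \Lambda > \Gamma\eta^2+\eta'+\Lambda = 0$, contradicting the spectral Geroch theorem. The paper instead assumes $\operatorname{width}\geqslant t_+-t_-$ and derives rigidity directly, so the inequality comes out as a by-product.

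Two corrections to this part:
\begin{enumerate}[(i)]
\item The quadratic form in $(w_\nu,h)$ is not positive definite on the whole range; that would need $\gamma<\tfrac{2n}{n-1}$, i.e.\ $\Gamma>0$. What $\gamma<\tfrac{2(n-1)}{n-2}$ gives is a nonnegative coefficient in front of $(w_\nu - \tfrac{h}{2(n-1)-(n-2)\gamma})^2$ (case II of Subsection \ref{sec rewrite}). $\Gamma$ is the leftover coefficient of $h^2$ and may be $\leqslant 0$; this is why the theorem allows $\Gamma\leqslant 0$ when $\Lambda>0$.
\item After the substitution $\psi=\phi u^{\gamma/2}$, the Laplacian carries the coefficient $\tfrac{4}{4-\gamma}\geqslant 1$, not $1$. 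The conformal change by $v^\kappa$ with $\tfrac{4(n-2)}{n-3}\kappa=\tfrac{8}{4-\gamma}$ needs $\kappa\leqslant 1$, which is again $\gamma\leqslant\tfrac{2(n-1)}{n-2}$.
\end{enumerate}

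\textbf{Equality case: genuine gap.} When $\operatorname{width}(M,g)=t_+-t_-$ there is no slack. With $\rho$ the distance, capped by $\min\{\cdot,t_+\}$ as in the paper (uncapped, $\rho$ can exceed $t_+$ on $\partial_+M$ and leave $I_{\Lambda,\Gamma}$), $h=\eta\circ\rho$ equals $\eta(t_\mp)$ on $\partial_\mp M$. So (b) gives only non-strict barriers, and Lemma \ref{lem-warped-bubble} does not apply. A minimizer may touch $\partial M$ or degenerate to $\partial_-M$ itself. Your step ``every inequality must be an equality on a minimizer'' therefore has no interior stable warped $h$-hypersurface to act on. Taking limits of minimizers of strictly perturbed problems does not fix this by itself, because you would then have to rule out collapse onto $\partial M$.

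The paper handles this with the dichotomy in the proof of Theorem \ref{thm acg}, which the proofs of Theorems \ref{bm bw} and \ref{thm bw spec scalar} invoke.
\begin{itemize}
\item Either $\partial_-M$ (resp.\ $\partial_+M$) can be pushed inward to a strict barrier. This uses Lemma \ref{lm flow to barrier} when $\tilde H\lneqq 0$, or Lemma \ref{lm non stable to barrier} when $\tilde H=0$ but the boundary is unstable. The region between the two strict barriers is then excluded by the analogue of Theorem \ref{thm non-exist strict barrier}: infinitesimal rigidity, then the foliation of Lemma \ref{lm foliation}, then the sign claim on $\tilde H_t$ showing every leaf is again a minimizer, and finally a leaf touching $\partial_-M$ tangentially, which the strong maximum principle forbids.
\item Or there are maximal foliations by $\tilde H=0$ leaves starting from $\partial_-M$ and from $\partial_+M$. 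These must meet, hence coincide, and fill $M$.
\end{itemize}

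Your sentence ``the comparison ODE forces each leaf to also be a minimizer, so the foliation is by equidistant flat tori'' also skips the corresponding global step. You need an open--closed argument that the foliation extends until it reaches $\partial_\pm M$, and that the boundary components are themselves leaves. This again uses the weak barrier together with the strong maximum principle. Without it, the ``only if'' direction of the rigidity statement is not proved. The ``if'' direction is a direct check on the model, which you should also state.
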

\begin{remark}
  A new feature of this band width estimate is the possible negative or zero value of \( \Gamma \) in the
  ODE \eqref{eq general ode}, which is not present in the non-spectral case (i.e. \( \gamma =0\)).
  \end{remark}
Now let's turn to the Ricci curvature. The Bonnet-Myers theorem is a fundamental result
regarding the control of diameter by a Ricci curvature bound. More specifically,
it states that if a complete manifold with
$\ensuremath{\operatorname{Ric}}_{g} \geqslant n - 1$, then its diameter must be
less than $\pi$. This can be interpreted as a band type estimate: any band
with $\ensuremath{\operatorname{Ric}}_{g} \geqslant n - 1$ must have its width
less than $\pi$. Indeed, for a closed manifold, we can remove a pair of points
which realizes the diameter, and the manifold becomes a band in some sense. This
interpretation was achieved by {\cite{hirsch-rigid-2025}} via spacetime
harmonic function techniques.

Now, we apply the same interpretation for the Bonnet-Myers theorem of spectral
Ricci curvature which was shown by Antonelli-Xu {\cite{AX2024}}. Their theorem
states that if a complete manifold has positive spectral Ricci curvature with
$0 \leqslant \gamma < 4 / (n - 1)$, then the diameter of the manifold is
bounded. This result has been further extended to the spectral Bakry-Emery
curvatures {\cite{chu-spectral-2024,yeung-spectral-2025,wu-spectral-2025}}.
With the alternative interpretation, we can show the band with estimates with
different curvature bounds and a rigidity statement.
\comment{
\begin{theorem}
  \label{bm bw}Let $\Lambda$ be a real constant, $0 \leqslant \gamma <
  \tfrac{4}{n - 1}$,
  \[ \alpha = \tfrac{2 (2 - \gamma)}{4 - (n - 1) \gamma}, \beta = - \tfrac{2
     (n - 3)}{4 - \gamma (n - 1)}, \text{ } \Gamma = \tfrac{4 - (n - 1)
     \gamma}{4 (n - 1 - (n - 2) \gamma)}, \]
  $B = B_{\Lambda, \Gamma}$ and $\eta = \eta_{\Lambda, \Gamma}$. Let $(M, g)$
  be a band such that
\begin{enumerate}[(a)]
    \item there exists a positive function $u$ with
    \begin{equation}
      - \gamma u^{- 1} \Delta_g u + \operatorname{Ric} \geqslant \Lambda ; \label{eq
      ric1 spec}
    \end{equation}
    \item $H_{\partial_+ M} + \gamma u^{- 1} u_{\nu_+} \geqslant \eta (t_+)$
    on $\partial_+ M$, $H_{\partial_- M} + \gamma u^{- 1} u_{\nu_-} \leqslant
    \eta (t_-)$ for some $0 < t_- < t_+$ and if $\Lambda > 0$, we require $t_+
    < B^{- 1} \pi$,
  \end{enumerate}
  then
  \[ \operatorname{width} (M, g) \leqslant t_+ - t_- . \]
  Equality occurs if and only if $(M, g)$ is isometric to the model
  \begin{equation}
    ([t_-, t_+] \times S, \mathrm{d} t^2 + \phi^2 (t) g_S) \label{eq ric1
    model}
  \end{equation}
  where $\phi (t) = \mathrm{sn}_{\sigma} (Bt)^{\alpha}$ for some closed
  manifold $(S, g_S)$ with its Ricci curvature
  \begin{equation}
    \operatorname{Ric}_{g_S} \geqslant - (n - 2) \max_{t \in [t_-, t_+]} \phi^2
    (\tfrac{\phi'}{\phi})', \label{eq ric1 level ricci}
  \end{equation}
  and $u$ is a constant multiple of $\mathrm{sn}_{\sigma} (Bt)^{\beta}$.
\end{theorem}
}
\begin{theorem}
  \label{bm bw}Let $\Lambda$ be a constant, $0 \leqslant \gamma < \tfrac{n -
  1}{n - 2}$ and $\Gamma = \tfrac{4 - (n - 1) \gamma}{4 (n - 1 - (n - 2) \gamma)}$.
  Assume that at least one of $\Gamma$ and $\Lambda$ is positive,
  and $t_- < t_+$ be two numbers such that $[t_-, t_+] \subset I_{\Lambda,
  \Gamma}$. Let $(M, g)$ be a band such that
  \begin{enumerate}[(a)]
    \item there exists a positive function $u$ with
    \begin{equation}
      - \gamma u^{- 1} \Delta_g u + \operatorname{Ric}_{g} \geqslant \Lambda ; \label{eq
      ric1 spec}
    \end{equation}
    \item $H_{\partial_+ M} + \gamma u^{- 1} u_{\nu_+} \geqslant \eta (t_+)$
    on $\partial_+ M$, $H_{\partial_- M} + \gamma u^{- 1} u_{\nu_-} \leqslant
    \eta (t_-)$ on $\partial_- M$,
  \end{enumerate}
  then
  \[ \operatorname{width} (M, g) \leqslant t_+ - t_- . \]
  Equality occurs if and only if $(M, g)$ is isometric to the model
  \begin{equation}
    ([t_-, t_+] \times S, \mathrm{d} t^2 + \phi^2 (t) g_S) \label{eq ric1
    model}
  \end{equation}
  where $\phi (t) = \exp (\tfrac{2 - \gamma}{2 (n - 1 - (n - 2) \gamma)}
  \int^t \eta)$ for some closed manifold $(S, g_S)$ with its Ricci curvature
  \begin{equation}
    \mathrm{Ric}_{g_S} \geqslant - (n - 2) \max_{t \in [t_-, t_+]} \phi^2
    (\tfrac{\phi'}{\phi})', \label{eq ric1 level ricci}
  \end{equation}
  and $u$ is a constant multiple of $\exp (-\tfrac{n-3}{2 (n - 1 - (n - 2)
  \gamma)} \int^t \eta)$.
\end{theorem}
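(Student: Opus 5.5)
Arguing by contradiction, I will assume $\operatorname{width}(M,g) > t_+ - t_-$ and use a warped $h$-bubble in the sense of Section~\ref{sec warp}. Let $\rho$ be a smoothing of the distance function to $\partial_- M$, rescaled so that $\rho=t_-$ on $\partial_-M$, $\rho = t_+$ on $\partial_+ M$ and $|\nabla\rho|<1$. This is possible because the width exceeds $t_+-t_-$. Set $h=\eta\circ\rho$. I will minimize
\[
\mathcal{A}(\Omega)=\int_{\partial^*\Omega} u^{\gamma}\,\mathrm{d}\mathcal{H}^{n-1}-\int_{\Omega} h\,u^{\gamma}\,\mathrm{d}\mathcal{H}^{n}
\]
over Caccioppoli sets containing a neighborhood of $\partial_-M$. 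The strict barrier needed to keep minimizers off the boundary comes from condition (b) together with $\eta'<0$ and $|\nabla \rho|<1$. In the rigidity case these inequalities are non-strict, and I will treat that case separately. A minimizer $\Sigma$ with the chosen unit normal satisfies $H+\gamma u^{-1}u_\nu = h$. Since $[t_-,t_+]\subset I_{\Lambda,\Gamma}$, $\eta$ is smooth and bounded on $[t_-,t_+]$, so no singular barrier is needed.

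Next I will compute the second variation with a test function of the form $\varphi = u^{-\gamma/2}\psi$ (or with a general exponent chosen later). I will combine it with the Ricci condition: instead of the Gauss equation used in the scalar case, I will use $\operatorname{Ric}(\nu,\nu)\ge \operatorname{Ric}_g$. I will also rewrite $\Delta_g u = \Delta_\Sigma u + H u_\nu + \nabla^2u(\nu,\nu)$. The term $\nabla^2 u(\nu,\nu)$ is the awkward one. In the Ricci setting I expect to handle it as Antonelli--Xu do, by testing stability with $\varphi$ and integrating by parts on $\Sigma$, or by treating it through the $Z,W$ variables of Subsection~\ref{sec rewrite}. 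The goal is an inequality
\[
0\le \int_\Sigma |\nabla\psi|^2 - \Big(\Lambda + \Gamma h^2 + \langle\nabla h,\nu\rangle\Big)\psi^2 + (\text{nonpositive terms}),
\]
where $|A|^2\ge H^2/(n-1)$ and Young's inequality are used on the cross terms $H\cdot u^{-1}u_\nu$. The constant $\Gamma=\tfrac{4-(n-1)\gamma}{4(n-1-(n-2)\gamma)}$ should come out of optimizing the weight in the decomposition $H = h - \gamma u^{-1}u_\nu$. The range $\gamma<\tfrac{n-1}{n-2}$ is what keeps the relevant quadratic form positive semidefinite.

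Since $|\nabla \rho|<1$ and $\eta'<0$, we get $\langle\nabla h,\nu\rangle \ge \eta'(\rho)$. Then $\Lambda+\Gamma h^2+\langle\nabla h,\nu\rangle \ge \Lambda+\Gamma\eta^2+\eta' = 0$, with strict inequality somewhere because $|\nabla\rho|<1$. Taking $\psi=1$ on the closed hypersurface $\Sigma$ gives a contradiction. Unlike the torical case, no topology of $\Sigma$ is used: only the Ricci bound enters. This explains why the statement allows an arbitrary band and a general cross-section $S$.

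For rigidity, when the width equals $t_+-t_-$, I will follow the strategy described after Theorem~\ref{thm weighted fcs}. First, the boundary (or a minimizer) is infinitesimally rigid: every inequality above is an equality, so $\Sigma$ is umbilic with $A = \tfrac{H}{n-1}g$, $\operatorname{Ric}(\nu,\nu)$ attains $\operatorname{Ric}_g$, $u$ restricted to $\Sigma$ is constant, and the Young inequalities are equalities. Second, I will build a foliation $\Sigma_t$ near $\partial_-M$ with $H+\gamma u^{-1}u_\nu=\eta(t)$ constant on each leaf using Theorem~\ref{lm foliation}. Third, I will show that the leaves are $h$-bubble minimizers, which gives equality throughout. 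Equality in the Young step forces $u^{-1}u_\nu$ and $H$ to be fixed multiples of $\eta$. Integrating these gives $\phi'/\phi = \tfrac{2-\gamma}{2(n-1-(n-2)\gamma)}\eta$ and $u'/u = -\tfrac{n-3}{2(n-1-(n-2)\gamma)}\eta$, and together with umbilicity and a unit-speed lapse this yields the warped product $\mathrm{d}t^2+\phi^2 g_S$. The condition \eqref{eq ric1 level ricci} is exactly the requirement $\operatorname{Ric}_g(e,e)\ge \operatorname{Ric}(\partial_t,\partial_t)$ for tangential $e$, written via the warped-product formula. I expect the main obstacle to be the second-variation step, namely controlling the Hessian term $\nabla^2u(\nu,\nu)$ and extracting exactly this $\Gamma$. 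Proving that the lapse function is constant in the foliation argument is a secondary difficulty.
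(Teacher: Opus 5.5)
Your estimate argument takes a different route from the paper's, and it works once one step is repaired.

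\textbf{How the two routes compare.} The paper never argues by strict contradiction. It takes $\rho$ to be a truncated distance function from $\partial_-M$ with only $|\nabla\rho|\le 1$, so it only gets $W\ge 0$. It then runs the full rigidity machinery of Theorems~\ref{thm acg} and~\ref{thm non-exist strict barrier acg ric}: the strict-barrier versus maximal-foliation dichotomy at each boundary, infinitesimal rigidity, Lemma~\ref{lm foliation}, and the sign of $\tilde H$ along leaves. This shows that $\operatorname{width}\ge t_+-t_-$ forces a foliation by level sets of $\rho$, which yields both the inequality and the model in one pass. You instead spend the slack $|\nabla\rho|<1$ to get $W>0$ along every warped $h$-hypersurface, since $\eta'(\rho)\langle\nabla\rho,\nu\rangle\ge\eta'(\rho)|\nabla\rho|>\eta'(\rho)$. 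Then $\psi\equiv 1$ in Lemma~\ref{lm rewrite prelim}, with the Case~I rewrite and $Z=W$, contradicts stability immediately. The estimate therefore needs no foliation, and the heavy machinery is kept for the equality case, where your sketch follows the paper's argument.

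\textbf{The step that fails as written.} With $\rho=t_\pm$ on $\partial_\pm M$ you have $h=\eta(t_\pm)$ there. Condition (b) then gives only non-strict inequalities, so the hypothesis of Lemma~\ref{lem-warped-bubble} is not met. The facts $\eta'<0$ and $|\nabla\rho|<1$ control $h_\nu$, not the boundary values of $h$, so they do not supply strictness.

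\textbf{How to fix it.} Use the slack at the endpoints instead. Since the width exceeds $t_+-t_-$ and $I_{\Lambda,\Gamma}$ is open, you can choose $\rho$ with $|\nabla\rho|<1$, values in a compact subinterval of $I_{\Lambda,\Gamma}$, $\rho<t_-$ on $\partial_-M$ and $\rho>t_+$ on $\partial_+M$. Because $\eta$ is decreasing, both barriers are then strict. Alternatively, because $W>0$, a boundary with equality in (b) is an unstable warped $h$-hypersurface. Lemmas~\ref{lm flow to barrier} and~\ref{lm non stable to barrier} then produce strict barriers.

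\textbf{Two remarks on the equality case.} Non-strict barriers give you no minimizer directly, so starting the foliation requires the same dichotomy used in the proof of Theorem~\ref{thm acg}. The constant lapse you flag is not a separate difficulty. Equality in $\eta'(\rho)(\langle\nabla\rho,\nu\rangle-1)\ge 0$ forces $\langle\nabla\rho,\nu\rangle=1$, so the leaves are level sets of $\rho$ and $\rho$ itself serves as the coordinate $t$.
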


\begin{remark}
  It should be viable to prove via minimizing weighted geodesics from
  $\partial_- M$ to $\partial_+ M$ (see
  {\cite{catino-criticality-2025,hong-splitting-2025}}). Our approach is based
  on warped $\mu$-bubbles and is consistent with other parts of the
  article.
\end{remark}

\begin{remark}
  The equality case of Theorem \ref{bm bw} suggests that the diameter estimate
  proved in {\cite{AX2024}} cannot achieve an equality for $\gamma > 0$,
  because the completeness requires that $S$ is an $(n - 1)$-dimensional
  sphere and $\alpha = 1$ (from which $\gamma = 0$ for $n > 3$).
\end{remark}




\begin{theorem}
  \label{thm bw ric2}Let $0 \leqslant \gamma < 3 + \tfrac{1}{n - 2}$, $\Gamma
  = 1 - \gamma / 4$ (note that \( \Gamma>0 \)), and \( [t_-,t_+] \subset I_{\Lambda,\Gamma} \).
 If a torical band $M = [- 1, 1] \times T^{n - 1}$ with a metric
  $g$ and some positive function $u$ satisfy
\begin{enumerate}[(a)]
    \item $- \gamma u^{- 1} \Delta_g u + (n - 1) \operatorname{Ric}_{g} \geqslant
    \Lambda$,
    
    \item $H_{\partial_+ M} + \gamma u^{- 1} u_{\nu_+} \geqslant \eta (t_+)$
    on $\partial_+ M$ and $H_{\partial_- M} + \gamma u^{- 1} u_{\nu_{-}} \leqslant
    \eta (t_-)$ on $\partial_- M$,
  \end{enumerate}
 then
  \begin{equation}
    \ensuremath{\operatorname{width}}(M, g) \leqslant t_+ - t_- . \label{eq bw
    ric n-1}
  \end{equation}
\end{theorem}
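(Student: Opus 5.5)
We argue by contradiction with a warped $\mu$-bubble. Suppose $\operatorname{width}(M,g) > t_+ - t_-$. Choose a smoothed distance-type function $\rho: M \to [t_-, t_+]$ with $|\nabla \rho| < 1$, $\rho = t_-$ near $\partial_- M$ and $\rho = t_+$ near $\partial_+ M$. This is possible precisely because the width exceeds $t_+ - t_-$, possibly after shrinking the interval slightly and using the strict monotonicity $\eta' < 0$ to keep the boundary inequalities strict. Set $h = \eta \circ \rho$; then $h$ is well defined since $[t_-, t_+] \subset I_{\Lambda,\Gamma}$. By the barrier conditions (b), $\partial_\pm M$ act as barriers for the functional
\[
\mathcal{A}(\Omega) = \int_{\partial^* \Omega} u^{\gamma} \, \mathrm{d}\mathcal{H}^{n-1} - \int_M (\chi_\Omega - \chi_{\Omega_0}) \, h \, u^{\gamma} \, \mathrm{d}\mathcal{H}^n .
\]
We then invoke the existence theory for warped $h$-bubbles from Section \ref{sec warp} to obtain a minimizer $\Omega$ whose reduced boundary $\Sigma$ is smooth (up to the usual dimension restriction, or via standard regularity arguments) and homologous to $\partial_- M$. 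In particular, $\Sigma$ has a component that carries nonzero degree onto $T^{n-1}$.

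Along $\Sigma$ we have $H + \gamma u^{-1}u_\nu = h$, and stability holds. Write the second variation with test function $u^{-\gamma/2}\varphi$ (or a general power), in the unified $Z,W$ form of Subsection \ref{sec rewrite}. This produces, for all $\varphi$,
\[
\int_\Sigma u^{\gamma}\Big(|\nabla\varphi|^2 - \big(\operatorname{Ric}(\nu,\nu)+|A|^2 + \gamma u^{-1}\nabla^2u(\nu,\nu) - \gamma u^{-2}u_\nu^2 + \partial_\nu h\big)\varphi^2\Big) \ge 0 .
\]
Next we replace $\operatorname{Ric}(\nu,\nu)$ by the hypothesis (a), which controls $-\gamma u^{-1}\Delta_g u + (n-1)\operatorname{Ric}_g$. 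Using $\Delta_g u = \Delta_\Sigma u + \nabla^2u(\nu,\nu) + H u_\nu$, together with $|A|^2 \ge H^2/(n-1)$ and $\partial_\nu h \ge -|\eta'(\rho)|$, we complete squares in $H$ and $u^{-1}u_\nu$. The goal is to reach
\[
0 \le \int_\Sigma u^{\gamma}\Big(c|\nabla \varphi|^2 + \gamma u^{-1}\Delta_\Sigma u\,\varphi^2 + \ldots - \tfrac{1}{n-1}\big(\Lambda + \Gamma h^2 + \eta'\big)\varphi^2\Big),
\]
with $\Gamma = 1 - \gamma/4$, where the leftover $u$-gradient terms on $\Sigma$ are absorbed using the range $\gamma < 3 + \tfrac{1}{n-2}$. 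The ODE $\Gamma\eta^2 + \eta' + \Lambda = 0$ then kills the zeroth-order term.

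The main obstacle is the final step, which must extract a contradiction on the torus class $\Sigma$ without scalar-curvature induction. My plan is to choose $\varphi$ as a power of $u$ and integrate the $\Delta_\Sigma u$ term by parts. This yields that a weighted operator $-\operatorname{div}_\Sigma(u^{a}\nabla\cdot) - (\text{nonneg})$ has nonnegative first eigenvalue, with strictness coming from the strict boundary perturbation. This strictness gives a positive function $w$ on $\Sigma$ such that $\Sigma$ with the conformal or warped metric has positive spectral Ricci-type curvature in the tangential directions. Because the Ricci bound is pointwise in every direction, I would restrict (a) to tangential directions and apply the spectral Bonnet–Myers/torus obstruction inductively: $T^{n-1}$ admits no metric with positive spectral Ricci, and the weight is admissible in the range $\gamma < 3+\frac{1}{n-2}$. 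Alternatively, the contradiction would come from the fact that a torus-type component forces the strict inequality to fail upon integrating against the first eigenfunction. Checking that the constants close up exactly at $\gamma < 3+\tfrac{1}{n-2}$ is where I expect the real work.
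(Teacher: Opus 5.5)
There is a genuine gap. It sits at the step you treat as routine: feeding hypothesis (a) into the normal Ricci term. In the second variation, $\Delta_g u$ enters with coefficient $\gamma$ but next to a \emph{single} Ricci term. The potential is $Z=-\gamma u^{-1}\Delta_g u+\operatorname{Ric}(\nu,\nu)+|A|^2+\gamma H w_\nu+\gamma w_\nu^2+h_\nu$ with $w=\log u$, after moving $\gamma u^{-1}\Delta_\Sigma u$ into the gradient terms as in Lemma \ref{lm rewrite prelim}. (Your display also has the two $u$-terms with reversed signs; they should read $-\gamma u^{-1}\nabla^2u(\nu,\nu)+\gamma u^{-2}u_\nu^2$.)

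Hypothesis (a) only gives $\operatorname{Ric}(\nu,\nu)\geqslant\operatorname{Ric}_g\geqslant\frac{1}{n-1}(\Lambda+\gamma u^{-1}\Delta_g u)$. This leaves the residual $-\frac{n-2}{n-1}\gamma u^{-1}\Delta_g u$, which contains $\nabla^2u(\nu,\nu)$. That term has no sign and cannot be integrated by parts on $\Sigma$, so it is not an absorbable gradient term. Even for $\gamma=0$ this route yields only $\frac{1}{n-1}(\Lambda+h^2)+\eta'\circ\rho$, because $h_\nu$ enters with coefficient $1$, not $\frac{1}{n-1}$ as in your target. That is a Bonnet--Myers-type constant, never $\Gamma=1-\frac{\gamma}{4}$.

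The missing idea is the Gauss-equation identity used in case \textbf{III} of Subsection \ref{sec rewrite}:
\[
\operatorname{Ric}(\nu,\nu)+|A|^2=\sum_{i=1}^{n-1}\operatorname{Ric}(e_i,e_i)+H^2-R_\Sigma .
\]
It trades the one normal Ricci term for the $n-1$ tangential ones, which is why (a) carries the factor $n-1$, at the price of the intrinsic scalar curvature $R_\Sigma$. Keep the full $H^2$ and use $H=h-\gamma w_\nu$. This gives $H^2+\gamma Hw_\nu+\gamma w_\nu^2=(1-\frac{\gamma}{4})h^2+\gamma(w_\nu-\frac{h}{2})^2$, hence $Z+R_\Sigma\geqslant\Gamma(\eta\circ\rho)^2+\eta'\circ\rho+\Lambda+\eta'\circ\rho\,(\langle\nabla\rho,\nu\rangle-1)\geqslant0$.

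Consequently the contradiction must come from a scalar-curvature obstruction on $\Sigma$, which is exactly what you tried to avoid. Stability is one integral inequality and never gives a pointwise Ricci bound for the induced metric, since the Gauss equation brings in $A$. So an induction via ``no positive spectral Ricci on $T^{n-1}$'' has nothing to start from.

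What stability and Lemma \ref{lm rewrite prelim} actually give is that $-\frac{4}{4-\gamma}\Delta_\Sigma+R_\Sigma$ has positive principal eigenvalue once $Z+R_\Sigma>0$. For $n=3$, taking $\psi=1$ and applying Gauss--Bonnet contradicts the genus of the component of nonzero degree onto $T^2$. For $n\geqslant4$, take $v$ the principal eigenfunction and $\kappa$ with $\frac{4(n-2)}{n-3}\kappa=\frac{4}{4-\gamma}$. The metric $v^{4\kappa/(n-3)}g|_\Sigma$ then has positive scalar curvature, and this is where the hypothesis $\gamma<3+\frac{1}{n-2}$, equivalently $\kappa<1$, is used. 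That contradicts the Schoen--Yau (Geroch) obstruction on that component; compare the proof of Theorem \ref{thm acg ric2}.

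Finally, strictness does not come from making the boundary barriers strict, since $\Gamma\eta^2+\eta'+\Lambda\equiv0$. It can come from $\eta'\circ\rho\,(\langle\nabla\rho,\nu\rangle-1)\geqslant|\eta'\circ\rho|(1-|\nabla\rho|)>0$ under your own choice $|\nabla\rho|<1$. You threw this away by bounding $\partial_\nu h\geqslant-|\eta'(\rho)|$. Alternatively, perturb $\Lambda$ using the excess width. The paper instead argues as in Theorem \ref{bm bw}, showing that width $\geqslant t_+-t_-$ forces the rigid foliation.
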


The rigidity of the band width estimate in Theorem \ref{thm bw ric2} is
interesting, and we are able to show the following result. Note that the
exponent $\gamma$ now has a smaller range and the dimension is $3$.

\begin{theorem}
  Let $0 \leqslant \gamma \leqslant 2$ and $(M, g)$ be a 3-dimensional torical
  band which satisfies the assumptions of Theorem \ref{thm bw ric2}, the
  equality in \eqref{eq bw ric n-1} is achieved if and only if the torical
  band is isometric to the model
  \[ \bar{g} = \mathrm{d} t^2 + \phi (t)^2 \mathrm{d} s_1^2 + \varphi (t)^2
     \mathrm{d} s_2^2, \]
  where $\phi$ and $\varphi$ are given by the following: If $\Lambda = 0$,
  \[ \phi' / \phi + \varphi' / \varphi = \tfrac{1 - \gamma / 2}{1 - \gamma /
     4} t^{- 1}, \phi' / \phi - \varphi' / \varphi = F_0 t^{- \tfrac{1 -
     \gamma / 2}{1 - \gamma / 4}} \]
  with $F_0 \in \mathbb{R}$, $t \in [t_-, t_+]$ satisfying $F_0^2 t^{\tfrac{2
  \gamma}{4 - \gamma}} \leqslant \tfrac{1 - \gamma / 2}{1 - \gamma / 4}$.
  
  If $\Lambda < 0$,
\begin{align}
\phi' / \phi + \varphi' / \varphi & = (1 - \gamma / 2) \sqrt{\tfrac{-
\Lambda}{1 - \gamma / 4}} \coth (\sqrt{- \Lambda (1 - \gamma / 4)} t), \\
\phi' / \phi - \varphi' / \varphi & = F_0 \sinh^{- \tfrac{1 - \gamma /
2}{1 - \gamma / 4}} (\sqrt{- \Lambda (1 - \gamma / 4)} t),
\end{align}
  with $F_0 \in \mathbb{R}$, $t \in [t_-, t_+]$ satisfying
  \[ F_0^2 \sinh^{2 \gamma / (4 - \gamma)} (\sqrt{- \Lambda (1 - \gamma / 4)}
     t) + \Lambda (1 - \gamma / 2) \leqslant 0. \]
  If $\Lambda > 0$,
\begin{align}
\phi' / \phi + \varphi' / \varphi & = (1 - \gamma / 2)
\sqrt{\tfrac{\Lambda}{1 - \gamma / 4}} \cot (\sqrt{\Lambda (1 - \gamma /
4)} t), \\
\phi' / \phi - \varphi' / \varphi & = F_0 \sin^{- \tfrac{1 - \gamma / 2}{1
- \gamma / 4}} (\sqrt{\Lambda (1 - \gamma / 4)} t),
\end{align}
  with $F_0 \in \mathbb{R}$, $t \in [t_-, t_+]$ satisfying
  \[ F_0^2 \sin^{2 \gamma / (4 - \gamma)} (\sqrt{- \Lambda (1 - \gamma / 4)}
     t) - \Lambda (1 - \gamma / 2) \leqslant 0. \]
  The conditions on $F_0$ and $t \in [t_-, t_+]$ are to ensure that the Ricci
  curvature normal to the $\partial_t$ direction is greater than or equal to
  $\operatorname{Ric} (\partial_t, \partial_t)$.
\end{theorem}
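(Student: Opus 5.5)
The plan has two directions: a verification that the listed models attain equality, and a rigidity argument extracting the metric from equality. The rigidity direction follows the strategy already laid out for Theorem \ref{thm weighted fcs} and Theorem \ref{thm bw ric2}.

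\emph{The models attain equality.} For the model $\bar g$ take $u = \exp(\tfrac12\int^t\eta)$, in analogy with Theorem \ref{thm acg ric2}, where $u$ is a multiple of $e^{\eta t/2}$. The level sets $\{t\}\times T^2$ have mean curvature $H=\phi'/\phi+\varphi'/\varphi$, and the weighted mean curvature is $H+\gamma u^{-1}u_t$. The prescribed sum $\phi'/\phi+\varphi'/\varphi = (1-\gamma/2)\eta$ then gives $H+\gamma u^{-1}u_t=\eta(t)$. Next compute $\operatorname{Ric}(\partial_t,\partial_t) = -(\phi''/\phi+\varphi''/\varphi)$ and $-\gamma u^{-1}\Delta u$. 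Write $a=\phi'/\phi$ and $b=\varphi'/\varphi$, and let $\operatorname{Ric}(\partial_t,\partial_t)$ be the least eigenvalue. Then the spectral quantity reduces to an ODE in $a+b$ and $a-b$. The sum equation is exactly \eqref{eq general ode} with $\Gamma=1-\gamma/4$. The difference equation $(a-b)'=-(a+b)(a-b)$ integrates to $a-b=F_0\exp(-\int(a+b))$, which produces the $t^{-\frac{1-\gamma/2}{1-\gamma/4}}$ and $\sinh^{-\cdots}$ (resp. $\sin^{-\cdots}$) factors. The remaining task is to check that the tangential Ricci curvatures $-a'-a(a+b)$ and $-b'-b(a+b)$ are at least $\operatorname{Ric}(\partial_t,\partial_t)$. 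This check gives the stated inequalities on $F_0$.

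\emph{Rigidity.} Assume width $=t_+-t_-$. First, run the warped $\mu$-bubble argument of Theorem \ref{thm bw ric2}: use a distance-like function $\rho$ and the weighted prescribed mean curvature $\eta(\rho)$. In the equality case this argument forces every inequality in the stability computation to be an equality. As in Subsection \ref{sec rewrite}, I would encode this in the variables $Z,W$. Then apply Theorem \ref{lm foliation} to get a foliation by leaves $\Sigma_t$ with weighted mean curvature $\eta(t)$. Each leaf is a flat torus, the normal is geodesic, $u$ depends on $t$ only, and the leaves are umbilic in the trace-adjusted sense. The first-variation/minimizing argument (cf. \cite{bray-rigidity-2010}) shows each leaf is also a minimizer. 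This yields $g=\mathrm{d}t^2+g_t$, with equality in (a) and $\operatorname{Ric}_g=\operatorname{Ric}(\partial_t,\partial_t)$.

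The main obstacle is showing that $g_t$ is diagonal with the product form $\phi^2\mathrm{d}s_1^2+\varphi^2\mathrm{d}s_2^2$. Unlike the scalar case, equality only forces the \emph{trace} part $H$ plus the $u$-term to match, while the traceless second fundamental form $\mathring A$ need not vanish. This is why two warping functions appear, and why the restriction $\gamma\le 2$ is needed: with it, the coefficient of $|\mathring A|^2$ in the rigidity identity has a favorable sign, so equality pins down the evolution of $\mathring A$ rather than killing it. I would use the Riccati equation $A'+A^2=-R(\cdot,\partial_t,\partial_t,\cdot)$ together with flatness of each leaf, via Gauss $\det A=-\operatorname{Ric}$-type terms in dimension $2$. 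From these I would show that the eigenvectors of $A$ are parallel on the flat leaves and independent of $t$. Then flat coordinates $s_1,s_2$ diagonalize every $g_t$, and the ODE analysis from the first direction identifies $\phi,\varphi$.
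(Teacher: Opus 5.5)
Your architecture matches the one the paper uses: a foliation by warped $\eta\circ\rho$-bubbles, all terms of $W$ vanishing, then ODEs for $G=\phi'/\phi+\varphi'/\varphi$ and $F=\phi'/\phi-\varphi'/\varphi$. However, you have the Ricci equality condition backwards, and this breaks both directions.

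\textbf{The Ricci condition is reversed.} In case \textbf{III} the only Ricci term in \eqref{eq W ric2} is $\sum_i\operatorname{Ric}(e_i,e_i)-2\operatorname{Ric}_g\geqslant 0$. Equality therefore forces the \emph{tangential} directions of the leaves to realize the least Ricci curvature: $\operatorname{Ric}|_{T\Sigma_t}=\operatorname{Ric}_g\,g|_{\Sigma_t}$ and $\operatorname{Ric}(\partial_t,\partial_t)\geqslant\operatorname{Ric}_g$. The condition $\operatorname{Ric}_g=\operatorname{Ric}(\partial_t,\partial_t)$ that you state belongs to case \textbf{I}. The closing sentence of the statement is misworded; the displayed constraints on $F_0$ are exactly \eqref{eq G F}, i.e.\ $\operatorname{Ric}(\partial_t,\partial_t)-\operatorname{Ric}(e_i,e_i)=-\tfrac12(G'+F^2)\geqslant 0$.

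\textbf{Consequence for the model check.} Once $F'=-FG$, one has $2\operatorname{Ric}(e_i,e_i)=-G'-G^2$, independent of $F$. With $u^{-1}\Delta u=\tfrac12\eta'+\tfrac14\eta^2+\tfrac12G\eta$ and $G=(1-\tfrac{\gamma}{2})\eta$, you get $-\gamma u^{-1}\Delta u+2\operatorname{Ric}_g=-\eta'-(1-\tfrac{\gamma}{4})\eta^2=\Lambda$. With your choice $2\operatorname{Ric}(\partial_t,\partial_t)=-2G'-G^2-F^2$ instead, two things go wrong:
\begin{enumerate}[(i)]
\item the coefficient of $\eta'$ becomes $-(2-\tfrac{\gamma}{2})$ and $F^2$ enters, so the sum equation is not \eqref{eq general ode};
\item your check ``tangential $\geqslant\operatorname{Ric}(\partial_t,\partial_t)$'' reads $G'+F^2\geqslant 0$, the reverse of the stated constraint.
\end{enumerate}

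\textbf{The reason for $\gamma\leqslant 2$.} Your explanation is wrong: no $|A|^2$ or $|\mathring A|^2$ term survives in \eqref{eq W ric2}, so no umbilicity of any kind is forced. The restriction comes from $G'+F^2\leqslant 0$ together with $G'=(1-\tfrac{\gamma}{2})\eta'$ and $\eta'<0$. This is solvable only if $\gamma\leqslant 2$, and it forces $F\equiv 0$ at $\gamma=2$. For $2<\gamma<4$ equality is never attained.

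\textbf{The rigidity step has no mechanism.} The step you call the main obstacle is left without an argument, and the tools you name cannot supply one. The Riccati equation alone only expresses $S'$ through the unknown normal curvature operator. Gauss on a flat leaf only gives $2\det A=\operatorname{Ric}(\nu,\nu)-2\operatorname{Ric}_g$, which says nothing about eigenvectors. What works:
\begin{enumerate}[(a)]
\item Since $T\Sigma_t$ lies in the least eigenspace, $\operatorname{Ric}(\nu,\cdot)|_{T\Sigma_t}=0$. Also $H=(1-\tfrac{\gamma}{2})\eta(t)$ is constant on each leaf. So the traced Codazzi equation gives $\operatorname{div}_{\Sigma}\mathring A=0$.
\item On a flat torus this makes $\mathring A$ a holomorphic quadratic differential, hence parallel.
\item Take coordinates $(t,s)$, with $s$ flat coordinates on one leaf transported along the normal geodesics, and let $S$ be the shape operator. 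Gauss plus Riccati on flat leaves give the tangential Ricci operator as $-S'-HS$.
\item Its isotropy gives $S'=-HS-\operatorname{Ric}_g\,\mathrm{Id}$. Hence $\mathring S(t)=e^{-\int_0^t H}\mathring S(0)$, with $\mathring S(0)$ constant in $s$.
\item Integrating $\partial_t g_t=2g_tS$ puts $g_t$ in diagonal form in the fixed eigenbasis of $\mathring S(0)$. This is the doubly warped product, and $\mathring S'=-H\mathring S$ is precisely $F'=-GF$.
\end{enumerate}
With your condition $\operatorname{Ric}_g=\operatorname{Ric}(\partial_t,\partial_t)$, the tangential Ricci may be anisotropic. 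Its traceless part then drives $\mathring S'$ and the eigenframe can rotate in $t$, so your route does not close. The paper obtains this step by citing \cite[Subsection 3.4]{chai-band-2025}.
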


\begin{remark}
  It is interesting to note that the rigid band for Theorem \ref{thm bw ric2}
  is a doubly warped product when $0 \leqslant \gamma < 2$; when $\gamma = 2$,
  the rigid band is a warped product; but when $2 < \gamma < 4$, there is no
  rigid band which realizes the width.
\end{remark}

Most of the proof of Theorem \ref{thm bw ric2} was laid out in {\cite[Theorems
1.1-1.2]{chai-band-2025}}. We only need an additional argument to deal with the
boundary which is very similar to that of Theorem \ref{bm bw}. We omit the
proof of Theorem \ref{thm bw ric2}, see also Remark \ref{rk omit proof ric2}.

\subsection{Applications and extensions}

We have found some applications of the band width estimate to the splitting
theorems and Geroch conjecture for manifolds with positive spectral scalar
curvature with arbitrary ends.

\begin{theorem}
  \label{thm global splitting scalar}Let $(M^n, g) = (T^{n - 1} \times
  \mathbb{R}, g)$ and $u$ a positive function on $M$, if
  \[ - \gamma \Delta u + \tfrac{1}{2} R_g u \geqslant0 \]
  for some $0 < \gamma < \tfrac{2 n}{n - 1}$ and $3 \leqslant n \leqslant7$, then
  $(M^n, g)$ is isometric to $(T^{n - 1} \times \mathbb{R}, g_{\mathbb{T}^{n -
  1}} + \mathrm{d} t^2)$ where $g_{\mathbb{T}^{n - 1}}$ is a flat metric on
  $T^{n-1}$ and $\mathrm{d} t^2$ is the metric on $\mathbb{R}$.
\end{theorem}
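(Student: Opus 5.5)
The plan is to produce a weighted area-minimizing torus in $M$, prove it is infinitesimally rigid, and then spread the rigidity along a foliation to all of $T^{n-1}\times\mathbb{R}$, in the spirit of the strategy described after Theorem \ref{thm weighted fcs}. First note that for $0<\gamma<\tfrac{2n}{n-1}$ we have $\gamma<\tfrac{2(n-1)}{n-2}$ and
\[ \Gamma=\tfrac{2n-(n-1)\gamma}{4(n-1)-2(n-2)\gamma}>0 . \]
So the setting of Theorem \ref{thm bw spec scalar} with $\Lambda=0$ applies, with $\eta(t)=\tfrac{1}{\Gamma t}$ on $I_{0,\Gamma}=(0,\infty)$.

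\textbf{Step 1: a weighted minimizing torus.} Fix a compact reference slice $\Sigma_0=T^{n-1}\times\{0\}$. For each $L$, work in the band $M_L=T^{n-1}\times[-L,L]$ and minimize a warped $h$-bubble functional
\[ \int_{\partial^*\Omega}u^{\gamma}\,\mathrm{d}\mathcal{H}^{n-1}-\int_{\Omega}h_L\,u^{\gamma}\,\mathrm{d}\mathcal{H}^{n} \]
over Caccioppoli sets $\Omega\supset T^{n-1}\times[-L,-L+1]$. Here $h_L$ is built from shifted and rescaled copies of $\eta$: it tends to $\pm\infty$ near $\partial_\mp M_L$ and tends to $0$ locally uniformly as $L\to\infty$. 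The blow-up of $h_L$ at the ends acts as a barrier, so the minimizer stays in the interior. Since $n\leqslant 7$, the minimizer $\Sigma_L$ is smooth. It is homologous to $\Sigma_0$, and hence it separates the two ends and must meet the fixed compact set $T^{n-1}\times[-1,1]$. Comparing with $\Sigma_0$ gives a uniform bound on its weighted area in compact sets. Standard compactness for weighted minimizers with $h_L\to0$ then gives a subsequential limit $\Sigma$. This $\Sigma$ is a closed, smooth $u^{\gamma}$-weighted area-minimizing hypersurface that is homologous to $\Sigma_0$; taking a component if necessary, it has nonzero degree onto $T^{n-1}$.

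\textbf{Main obstacle.} I expect the delicate point to be ruling out that the $\Sigma_L$ escape, or develop unbounded pieces, along the arbitrary ends. No geometry of the ends is assumed, so the only tools are the homological intersection with $T^{n-1}\times[-1,1]$ and the band width estimate of Theorem \ref{thm bw spec scalar}. The band width estimate prevents $h_L$-bubbles with the prescribed boundary behaviour from living in bands that are too wide, and that is exactly what lets the barrier functions be chosen consistently.

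\textbf{Step 2: infinitesimal rigidity.} Apply the stability inequality for $\Sigma$ with the test function $u^{-\gamma/2}\varphi$ (or with the $Z,W$ rewriting of Subsection \ref{sec rewrite}). Using $-\gamma u^{-1}\Delta u+\tfrac12R_g\geqslant0$ and $\gamma<\tfrac{2n}{n-1}$ to absorb the gradient terms, this produces on $\Sigma$ a positive function $w$ for which $-\Delta_\Sigma+\tfrac12 R_\Sigma$, suitably weighted by $w$, is nonnegative. Since $\Sigma$ maps with nonzero degree onto $T^{n-1}$, the Schoen--Yau descent (the torus admits no positive scalar curvature, even in this spectral sense) forces every inequality used to be an equality. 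In particular $\Sigma$ is flat and totally geodesic, $\operatorname{Ric}(\nu,\nu)=0$, and $u^{-1}\nabla u$ vanishes along $\Sigma$ in both the tangential and normal directions. The strict inequality $\gamma>0$ is what forces $\partial_\nu u=0$ and $\nabla^\Sigma u=0$.

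\textbf{Step 3: local and global splitting.} By Theorem \ref{lm foliation}, $\Sigma$ is a leaf of a foliation $\{\Sigma_t\}$ by hypersurfaces of constant weighted mean curvature. Using the curvature condition on the lapse equation, one shows that the weighted mean curvature $\mathcal H(t)$ of $\Sigma_t$ satisfies a differential inequality that forces $\mathcal H(t)\geqslant0$ for $t<0$ and $\mathcal H(t)\leqslant0$ for $t>0$. The first variation formula then gives $\mathcal{A}_u(\Sigma_t)\leqslant\mathcal{A}_u(\Sigma)$. Since $\Sigma$ is minimizing, every $\Sigma_t$ is also weighted minimizing, so Step 2 applies to each leaf. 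Hence a neighbourhood of $\Sigma$ is isometric to a product $T^{n-1}\times(-\varepsilon,\varepsilon)$ with a flat factor, and $u$ is constant there. The set of parameters where this product structure with minimizing leaves holds is therefore open. It is closed by smooth convergence of minimizing leaves. A connectedness argument, as in \cite{cai-rigidity-2000}, extends the product structure to all of $T^{n-1}\times\mathbb{R}$, which gives the isometry to $g_{\mathbb{T}^{n-1}}+\mathrm{d}t^2$.
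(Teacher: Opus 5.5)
There is a genuine gap in Step 1, at exactly the point you flag as the main obstacle, and the justification you give there is wrong. A closed hypersurface homologous to $\Sigma_0$ need not meet $T^{n-1}\times[-1,1]$. For instance, the slice $T^{n-1}\times\{s\}$ with $|s|>1$ is homologous to $\Sigma_0$ and misses it. Homology only forces intersection with every proper path joining the two ends. So nothing in your argument prevents the bubbles $\Sigma_L$ from drifting off into an end as $L\to\infty$. The limit would then be empty, or not closed, and Steps 2--3 have nothing to act on. The band width estimate of Theorem \ref{thm bw spec scalar} does not repair this. It needs weighted mean curvature bounds on the two boundary components of a band, and the slabs $T^{n-1}\times[a,b]\subset M$ carry no such information.

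The missing idea is to build strict positivity into $h$ itself, which is how the paper sets things up, following \cite{zhu-rigidity-2020}.
\begin{itemize}
\item The function $h_\epsilon$ of Lemma \ref{Lem-function-h} satisfies $2(\Gamma h_\epsilon^2+h_\epsilon')=\tfrac{n^2\epsilon^2}{2\Gamma}>0$ outside the fixed compact slab $\phi^{-1}([-\tfrac{1}{2n},\tfrac{1}{2n}])$, and this quantity is $O(\epsilon)$ inside. This is exactly where $\Gamma>0$, i.e.\ $\gamma<\tfrac{2n}{n-1}$, enters.
\item Suppose a component of the bubble with nonzero degree onto $T^{n-1}$ lay entirely in the outer region. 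By rewrite \textbf{II} it would satisfy $Z+\tfrac12R_\Sigma\geqslant\Gamma h^2+h'>0$, so $-\tfrac{4}{4-\gamma}\Delta_\Sigma+\tfrac12R_\Sigma$ would have positive first eigenvalue.
\item For $n\geqslant4$, the conformal change used in the proof of Theorem \ref{thm non-exist strict barrier} then gives a positive scalar curvature metric on that component. For $n=3$, Gauss--Bonnet makes it a sphere. Either way this contradicts the torus obstruction.
\end{itemize}
Hence every bubble meets a fixed compact set, and this is the input needed to extract a nonempty limit as $\epsilon\to0$. The paper defers the remaining limiting details to \cite[Section 4.2]{chai-band-2025}. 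Your description of $h_L$ does not impose this property. Copies of $\eta_{0,\Gamma}=1/(\Gamma t)$ in the natural scaling satisfy $\Gamma\eta^2+\eta'=0$ exactly and supply no positive term.

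Once a closed $\Sigma$ is available, your Steps 2--3 run on the same engine as the paper. The only difference is the endgame. The paper uses the foliation only to show that $u$ is constant on $M$; then $R_g\geqslant0$, and it quotes Zhu's rigidity theorem for $T^{n-1}\times\mathbb{R}$. You instead propagate the product structure yourself by an open--closed argument, which is also acceptable. Note, though, that at the infinitesimal stage you only get $\operatorname{Ric}(\nu,\nu)=\gamma u^{-1}\nabla^2_{\nu\nu}u$, not $\operatorname{Ric}(\nu,\nu)=0$. It vanishes only after $u$ is known to be constant near $\Sigma$.
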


Below is a generalization of {\cite[Theorem 3]{chodosh-generalized-2024}} to
the spectral setting.

\begin{theorem}
  \label{thm spectral geroch arbitrary end}For any $n$-manifold \( X \) ($3 \leqslant
  n \leqslant 7$), the connected sum $M = T^n \#X$ does not admit a complete
  metric of spectral positive scalar curvature with $0 \leqslant \gamma <
  \frac{2 n}{n - 1}$.
\end{theorem}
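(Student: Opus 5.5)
We argue by contradiction, following the strategy of Chodosh--Li \cite[Theorem 3]{chodosh-generalized-2024} with the band width estimate of Theorem \ref{thm bw spec scalar} in place of Gromov's. Suppose $M = T^n \# X$ carries a complete metric $g$ and a positive function $u$ with $-\gamma u^{-1}\Delta_g u + \tfrac12 R_g \geqslant \Lambda_0 > 0$ (after rescaling, a positive constant; if only pointwise positive, first restrict to a compact region where it is bounded below). The plan has three steps. First, pass to a covering space in which $M$ contains arbitrarily wide torical-type bands. Second, run a warped $\mu$-bubble in such a band to produce a closed hypersurface with positive spectral scalar curvature in dimension $n-1$. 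Third, reach a contradiction with the $(n-1)$-dimensional obstruction, obtained by descending induction on dimension, which ends in dimension two via Gauss--Bonnet.

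\textbf{Step 1: covering and bands.} Take the $\mathbb{Z}$-cover of $T^n$ corresponding to one circle factor, $T^{n-1}\times\mathbb{R}$. Lift it to a cover $\hat M$ of $M$, in which copies of $X$ minus a ball are attached periodically. Pull back $g$ and $u$; the spectral condition is local, so it persists. Choose a separating hypersurface $\Sigma_-$ (a lift of $T^{n-1}\times\{0\}$ avoiding the attached pieces) and its translate $\Sigma_+$ by $k$ deck transformations. The region $V$ between them is a compact band with $\partial_\pm V=\Sigma_\pm$, and its width tends to infinity as $k\to\infty$; the $X$ pieces stay away from $\partial V$. The arbitrary ends of $X$ make $V$ noncompact, so, as in Chodosh--Li, we cut off along $\rho=d(\cdot,\Sigma_-)$ and work in $\{\rho \le t_+-t_-\}$. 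Completeness guarantees this set is compact. The ends of $X$ are either trapped inside (compact) or lie outside the sublevel set.

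\textbf{Step 2: warped $\mu$-bubble.} On this region, minimize the functional
\[
\mathcal{A}(\Omega)=\int_{\partial^*\Omega}u^\gamma-\int (\chi_\Omega-\chi_{\Omega_0})\,\eta(\rho)u^\gamma ,
\]
with $\eta=\eta_{\Lambda_0,\Gamma}$ and $\Gamma$ as in Theorem \ref{thm bw spec scalar}, placing $\rho$ so that $\eta\to\pm\infty$ at the two ends of $I_{\Lambda_0,\Gamma}$. (When $\Gamma\le 0$, take the $\Lambda_0>0$ branch of \eqref{eq eta gamma lambda}; its width is infinite, so we instead use a large barrier.) These barriers enforce existence and regularity for $n\le 7$. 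A width larger than the length of $I_{\Lambda_0,\Gamma}$ is exactly what the proof of Theorem \ref{thm bw spec scalar} rules out for torical bands. Here, however, we keep the minimizer $\Sigma$. By the stability computation behind Theorem \ref{thm bw spec scalar} (the $Z,W$ rewriting of Subsection \ref{sec rewrite}), $\Sigma$ carries a positive function $v$, built from the first eigenfunction times a power of $u$, with positive spectral scalar curvature modified by a new exponent $\gamma'$. We must check that $\gamma'$ stays in the admissible range $[0,\tfrac{2(n-1)}{n-2})$ of the dimension-$(n-1)$ statement. This is why the hypothesis is $\gamma<\tfrac{2n}{n-1}$.

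\textbf{Step 3: topology and induction.} Since $\Sigma$ separates $\Sigma_-$ from $\Sigma_+$, the map $T^{n-1}$-class argument of Chodosh--Li shows that some component $\Sigma_1$ of $\Sigma$ admits a nonzero degree map to $T^{n-1}$, possibly after taking a connected sum with something. Equivalently, $\Sigma_1$ is of the form $T^{n-1}\#Y$ up to a degree-one map. We then iterate: we use Schoen--Yau-type descent with warped weighted slicing, where each step is a weighted minimal hypersurface in a closed manifold with nonzero-degree map to a torus. This reduces to a closed surface with nonzero degree to $T^2$ and positive spectral curvature with exponent below $4$. That contradicts Theorem \ref{thm weighted fcs}(a)-type Gauss--Bonnet reasoning, since such a surface has genus $\ge1$.

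The main obstacle is controlling the exponent through the dimension reduction. Each slicing changes $\gamma$, and the spectral condition must be preserved with a uniform positive lower bound on the compact slices. I would first try to track the exponent explicitly via the $Z,W$ formalism, using the threshold $\tfrac{2n}{n-1}$ as the invariant that is preserved. The second obstacle is handling the arbitrary ends with the distance cutoff; for this I would copy Chodosh--Li verbatim, with weights inserted.
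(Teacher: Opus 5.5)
The overall skeleton is the one the paper uses: pass to $\hat M=(T^{n-1}\times\mathbb{R})\#_{\mathbb{Z}}X$, produce a warped $\mu$-bubble, and show that a component has nonzero degree onto $T^{n-1}$ and carries a positive operator $-\tfrac{4}{4-\gamma}\Delta+\tfrac12 R$. However, your mechanism for the arbitrary ends of $X$ fails, and handling those ends is the whole difficulty. With $h=\eta_{\Lambda_0,\Gamma}\circ\rho$ and $\rho=d(\cdot,\Sigma_-)$, the stability argument needs $-\gamma u^{-1}\Delta_g u+\tfrac12 R_g\geqslant\Lambda_0$ on all of $\{\rho<\ell\}$, where $\ell=\pi/\sqrt{\Lambda_0\Gamma}$, because the bubble may sit anywhere there. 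This region reaches depth comparable to $\ell$ into the ends of the copies of $X$. There the spectral scalar curvature is only pointwise positive and may decay arbitrarily fast. So $\Lambda_0=\Lambda_0(\ell)$, and nothing guarantees $\ell\sqrt{\Lambda_0(\ell)\Gamma}\geqslant\pi$ for any $\ell$. Your step ``restrict to a compact region where it is bounded below'' is therefore circular. Rescaling cannot help either, since $\Lambda_0\ell^2$ is scale invariant.

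The missing idea, which the paper takes from Chodosh--Li, has two parts. First, require a uniform lower bound only on the periodic part $\hat M_0=\hat\Psi(\hat\Xi)$; it holds there by compactness of $\overline{\Psi(\Xi)}$ and can be normalized to $>1$ by scaling. Second, use a different profile inside the $X$ pieces. Let $\rho_1$ be the Lipschitz function of Chodosh--Li (essentially $x_n$ on $\hat M_0$, distance-like into each $X_k$), with $\mathrm{Lip}(\rho_1)<L$, and set $\gamma_1=\sqrt{\Gamma}$.
\begin{itemize}
\item On $\hat M_0$ one takes $h=-\gamma_1^{-1}\tan(\gamma_1\rho_1/L)$.
\item On each attached piece $X^0_k$, $|k|\leqslant J$, one takes $h=L/(\gamma_1^2(\rho_1-c_k))$, with $c_k$ chosen so that the two profiles agree at $\rho_1=k+\tfrac12$.
\end{itemize}
The second profile satisfies $\gamma_1^2h^2-|\nabla h|\geqslant L(L-\mathrm{Lip}(\rho_1))\gamma_1^{-2}(\rho_1-c_k)^{-2}>0$ on its own. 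Hence inside $X_k$ only nonnegativity of the spectral curvature is used. It also diverges at finite depth into $X_k$, to $-\infty$ for $k\geqslant 0$ and to $+\infty$ for $k<0$. These barriers make $\{|h|<\infty\}$ relatively compact. This gives existence of the bubble and $\gamma_1^2h^2-|\nabla h|+\Lambda>0$ wherever it lies.

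Two smaller points. For $0\leqslant\gamma<\tfrac{2n}{n-1}$ one always has $\Gamma>0$, and this finite-distance blow-up of both profiles is where the hypothesis enters. So your $\Gamma\leqslant 0$ case never arises, and it would not produce barriers anyway. Also, Step 3 needs no weighted descent. The bubble is closed and $\tfrac{8}{4-\gamma}<\tfrac{4(n-2)}{n-3}$, so for $n\geqslant 4$ the conformal Laplacian of the relevant component is positive. Schoen--Yau then applies to the resulting PSC metric on a closed manifold with a nonzero-degree map to $T^{n-1}$; when $n=3$, Gauss--Bonnet suffices.
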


He-Yu-Shi {\cite[Corollary 2.7]{he-positive-2025}} proved the manifold
$N\#X$ does not admit a complete metric of spectral positive scalar curvature
with $0 \leqslant\gamma < 2$, where $N$ is an enlargeable manifold.
Theorem \ref{thm spectral geroch arbitrary end} suggests that closely related results such as
\cite{chen-positive-2024} can be generalized to the settings of spectral scalar curvature.

\begin{remark}
  For $X$ closed, this theorem could be easily proved via a conformal change
  $u^{\tfrac{n - 2}{2 (n - 1)} \gamma} g$, and we only have to require that
  $\gamma < \tfrac{2 (n - 1)}{n - 2}$. We can conclude that $g$ is flat and
  $u$ is a positive constant. 
\end{remark}




In dimension 3, we have the following
which is an extension of {\cite{chodosh-splitting-2019}} to the weighted
setting.

\begin{theorem}
  \label{thm cylinder splitting}Let \( 0 \leqslant \gamma <3 \) and $(M^3, g)$ be a connected, orientable,
  complete Riemannian manifold with $- \gamma u^{- 1} \Delta_g u +
  \tfrac{1}{2} R_g \geqslant 0$ for some positive function $u$. Assume that
  $(M, g)$ contains a properly embedded surface $\Sigma \subset M$ that is
  both homeomorphic to the cylinder and absolutely weighted area-minimizing.
  Then $(M, g)$ is flat and $u$ is a constant.
\end{theorem}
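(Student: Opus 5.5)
We follow Chodosh--Eichmair--Moraru \cite{chodosh-splitting-2019} (see also \cite{cai-rigidity-2000}), replacing area by the $u^{\gamma}$-weighted area and using the non-compact case of Theorem \ref{thm weighted fcs} (\ref{weight fcs b}). The condition $0\leqslant\gamma<3$ is exactly what that result needs.

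\textbf{Step 1: infinitesimal rigidity of $\Sigma$.} Since $\Sigma$ is absolutely weighted area-minimizing, it is stable. It is a cylinder, so by Theorem \ref{thm weighted fcs} (\ref{weight fcs b}) it is flat. We then go back to the stability inequality. Write the second variation with the Schoen--Yau rewriting adapted to the weight (the $Z,W$ variables of Subsection \ref{sec rewrite}), using $-\gamma u^{-1}\Delta_g u+\tfrac12 R_g\geqslant 0$ and $K\equiv 0$. Test with logarithmic cutoffs $\varphi$. On a flat cylinder these have arbitrarily small Dirichlet energy, because the cylinder is parabolic and has quadratic area growth. Letting the cutoffs tend to $1$ forces the following:
\begin{itemize}
\item $\Sigma$ is totally geodesic;
\item the spectral scalar curvature vanishes along $\Sigma$;
\item $u_\nu=0$;
\item $u|_\Sigma$ is constant, i.e.\ the tangential gradient term vanishes.
\end{itemize}
It also forces the Jacobi operator to have a positive kernel element. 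The care needed is only in making the $\gamma$-dependent cross terms absorbable, which is where $\gamma<3$ (as in Remark \ref{range gamma}) enters.

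\textbf{Step 2: local foliation and weighted mean curvature sign.} Near $\Sigma$ we build a foliation $\{\Sigma_t\}$ of constant weighted mean curvature leaves with $\Sigma_0=\Sigma$, via Theorem \ref{lm foliation}. The non-compactness is handled by working on exhausting compact pieces with fixed boundary, or, as in \cite{chodosh-splitting-2019}, by perturbing the metric in a compact set and solving Plateau-type problems for the weighted area. We differentiate the weighted mean curvature $H_t+\gamma u^{-1}u_{\nu_t}$ in $t$ and use the curvature condition together with Gauss--Bonnet-type control on each leaf. This shows that the weighted mean curvature has the sign that makes the weighted area non-increasing in $t$. By the first variation, each $\Sigma_t$ then has weighted area no larger than $\Sigma$ on compact pieces, so each leaf is also weighted area-minimizing (cf.\ \cite{bray-rigidity-2010}). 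Each leaf is therefore again a flat, totally geodesic cylinder with $u$ constant along it and $u_\nu=0$.

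\textbf{Step 3: globalize.} The lapse satisfies the Jacobi equation with zero potential, so it is constant and the metric is locally $dt^2+g_{\text{flat}}$. Hence $u$ is constant and $R_g=0$ near $\Sigma$. The set of points with such a neighborhood is open. It is also closed, by a continuity argument: limits of minimizing cylinders are minimizing cylinders, using proper embeddedness and uniform curvature estimates for stable weighted minimal surfaces. By connectedness $M$ is flat and $u$ is constant.

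\textbf{Main obstacle.} The hardest part is Step 2 in the non-compact setting. Producing area-minimizing nearby leaves needs a perturbation or exhaustion argument as in \cite{chodosh-splitting-2019}, and one must prevent the weighted minimizers from escaping to infinity. I would first try to reproduce their local metric-perturbation construction, using weighted area throughout and controlling $u$ via the spectral condition.
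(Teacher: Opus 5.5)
Your Step~1 agrees with what the paper extracts from Theorem~\ref{thm weighted fcs}~(\ref{weight fcs b}). Step~2, however, is a genuine gap: it is the step you flag as the main obstacle and leave open, and Step~3 rests on it.

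Every tool you invoke in Step~2 only works for closed leaves.
\begin{itemize}
\item Lemma~\ref{lm foliation} is an implicit-function-theorem construction that needs $-\Delta_\Sigma$ to be invertible modulo constants. On the flat cylinder $\mathbb{S}^1\times\mathbb{R}$ its spectrum is $[0,\infty)$ with no gap, so it yields no foliation by leaves of constant weighted mean curvature.
\item The sign argument for $\tilde H$ integrates $\Delta_{\Sigma_t}\xi_t$ and $\Delta_{\Sigma_t}w$ and applies Gauss--Bonnet over the whole leaf. This is only legitimate for closed leaves.
\item Compact pieces with fixed boundary do not rescue this. The lapse vanishes at the boundary, so $\xi_t=\log(\phi_t u^{\gamma/2})\to-\infty$ there and the boundary terms are uncontrolled. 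Such pieces also do not foliate a neighborhood, since all leaves share the boundary.
\item Even granting $\mathcal{A}_u(\Sigma_t\cap K)\leqslant\mathcal{A}_u(\Sigma\cap K)$ on compact pieces, a non-compact leaf need not be absolutely minimizing. To test a compactly supported competitor of $\Sigma_t$ against $\Sigma$, you must glue across a lateral strip. Its weighted area is of order $t\cdot|\partial(\Sigma\cap K)|$, which does not go to zero as $K$ exhausts.
\end{itemize}
The Plateau/perturbation idea of \cite{chodosh-splitting-2019} that you mention does not build the foliation. It replaces it, and its key mechanism is missing from your plan.

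The paper never foliates; it perturbs the \emph{weight}. It constructs $u_{r,t}$ with the following properties:
\begin{itemize}
\item $u_{r,t}=u$ where $\operatorname{dist}(\cdot,c(2r))\geqslant 3r$, and $u_{r,t}<u$ inside that ball;
\item the spectral scalar curvature of $u_{r,t}$ is strictly positive on the annulus $r<\operatorname{dist}(\cdot,c(2r))<3r$;
\item $\Sigma$ stays weakly weighted mean-convex.
\end{itemize}
It then solves the weighted Plateau problem with boundary $\partial\Sigma_h$ inside $B_h$. Near $\partial B_h$ the weight is changed by a factor within $1\pm\delta$ so that $\partial B_h$ is a barrier. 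The chain \eqref{compare 1}--\eqref{compare 7} shows the minimizer $\Sigma_{r,t,h}$ must enter the $3r$-ball. Otherwise $u_{r,t}=u$ along it, and by the choice \eqref{choice of delta} it would have smaller $u^\gamma$-weighted area than $\Sigma_h$, contradicting absolute minimality of $\Sigma$.

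Letting $h\to\infty$ and then $t\to0$ gives weighted-minimizing surfaces $\Sigma_r\to\Sigma$. Here Theorem~\ref{thm weighted fcs} replaces the stable-surface lemmas of \cite{chodosh-splitting-2019}; its rigidity, together with strict positivity on the annulus, keeps the limits from collapsing onto $\Sigma$. Each $\Sigma_r$ is either a torus, where Theorem~\ref{thm weighted fcs}~(\ref{weight fcs a}) gives the product, or a flat cylinder along which $\nabla u=0$. The paper uses this to show $u$ is constant. The hypothesis then reduces to $R_g\geqslant0$, and \cite{chodosh-splitting-2019} applies verbatim. This replaces both your Step~2 and your open--closed Step~3.

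One minor correction to Step~1: $\gamma<3$ enters through the criterion $1-\gamma/4>1/4$ of \cite{berard-inverse-2014}, used to get flatness in Theorem~\ref{thm weighted fcs}~(\ref{weight fcs b}). It is not needed to absorb cross terms; once $\Sigma$ is flat, logarithmic cutoffs work for every $0<\gamma<4$. For $\gamma=0$ the $w$-terms drop out, so nothing about $u$ is forced.
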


Theorem \ref{thm cylinder splitting} can be easily extended to the case with
$- \gamma u^{- 1} \Delta_g u + 2\ensuremath{\operatorname{Ric}}_g$, $0
\leqslant \gamma < \tfrac{7}{2}$. We left it to the reader.

\begin{theorem}\label{thm cylinder splitting ric2}
  Theorem \ref{thm cylinder splitting} holds if the non-negativity of the spectral scalar
  curvature is replaced by the condition $- \gamma u^{- 1} \Delta_g u + 2\ensuremath{\operatorname{Ric}}_g \geqslant 0$, $0
\leqslant \gamma < \tfrac{7}{2}$. 
\end{theorem}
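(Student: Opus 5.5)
The plan is to follow the proof of Theorem \ref{thm cylinder splitting} (which extends \cite{chodosh-splitting-2019}), replacing each use of the spectral scalar curvature with the condition $-\gamma u^{-1}\Delta_g u + 2\operatorname{Ric}_g \geqslant 0$. The point where the Ricci version enters is the structure of stable weighted minimal surfaces, and for this we use Theorem \ref{thm fcs ric2} instead of Theorem \ref{thm weighted fcs}. The range $0\leqslant\gamma<\tfrac72$ is exactly the range in which Theorem \ref{thm fcs ric2} covers the non-compact case (\ref{weight fcs b}).

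\textbf{Step 1: the cylinder is flat and totally geodesic.} Let $\Sigma$ be the absolutely weighted area-minimizing cylinder. It is stable, complete, non-compact and conformal to $\mathbb{A}$, so Theorem \ref{thm fcs ric2} gives that $\Sigma$ is flat. Running the stability argument again yields the infinitesimal rigidity. Concretely, the stability inequality, after the usual rewriting with the weight $u^\gamma$, contains the term
\[
\operatorname{Ric}(\nu,\nu)+|A|^2-\gamma u^{-1}\Delta u+\dots
\]
Using $2\operatorname{Ric}(\nu,\nu)\geqslant 2\operatorname{Ric}_g$, the Gauss equation and parabolicity of the flat cylinder (log cutoffs), every nonnegative term in the inequality must vanish along $\Sigma$. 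This gives:
\begin{itemize}
\item[(i)] $A=0$;
\item[(ii)] $u_\nu=0$ and $u$ is constant on $\Sigma$;
\item[(iii)] equality $-\gamma u^{-1}\Delta u+2\operatorname{Ric}_g=0$ holds along $\Sigma$, with $\operatorname{Ric}(\nu,\nu)$ attaining the least Ricci curvature.
\end{itemize}
In particular the weighted Jacobi operator has a positive kernel element, namely the constant function.

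\textbf{Step 2: local foliation and minimizing leaves.} Using Theorem \ref{lm foliation} in its non-compact form (as in the proof of Theorem \ref{thm cylinder splitting}), construct a local foliation $\{\Sigma_t\}$ near $\Sigma$ by surfaces of constant weighted mean curvature $H_t$. This requires controlling the graph function at infinity, which is done by working on exhaustions and using the flatness of $\Sigma$. Next, use the variables $Z$ and $W$ of Subsection \ref{sec rewrite} with the Ricci condition to derive a differential inequality for $H_t$ and conclude $H_t\equiv 0$ for $t$ near $0$. Each $\Sigma_t$ is then weighted minimizing by the first-variation comparison (cf. \cite{bray-rigidity-2010}). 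Applying Step 1 to each $\Sigma_t$, every leaf is flat, totally geodesic, and has $u$ constant and $u_\nu=0$. Hence $g$ is locally the product $\mathrm{d}t^2+g_\Sigma$ with $u=u(t)$.

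\textbf{Step 3: from local to global.} A continuity/open-closed argument shows that the set of $t$ for which the leaf exists and is a properly embedded minimizing cylinder is open and closed. Closedness uses the curvature estimates for stable weighted minimal surfaces and the properness of the limit. It follows that $M$ is covered by the product region. With $u=u(t)$ and $\operatorname{Ric}\equiv0$ in the product, the equality in (iii) gives $\Delta u = u''=0$. Since $u>0$ on all of $\mathbb{R}$, $u$ is constant, and so $(M,g)$ is flat.

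\textbf{Main obstacle.} The hardest step is Step 2 in the non-compact setting: ensuring the foliation leaves are properly embedded, have controlled geometry at infinity, and are still absolutely minimizing. The plan there is to copy the corresponding argument of \cite{chodosh-splitting-2019}, as adapted in the proof of Theorem \ref{thm cylinder splitting}. That argument minimizes in a slab with fixed boundary on exhausting domains and passes to the limit using the weighted stability estimates, which hold for $\gamma<\tfrac72$ by \cite{berard-inverse-2014}.
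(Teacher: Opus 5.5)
\textbf{There is a genuine gap in Step 2, and Step 3 depends on it.} Lemma \ref{lm foliation} is an implicit-function-theorem construction on a \emph{closed} hypersurface. It relies on $-\Delta_\Sigma$ being invertible on mean-zero functions, normalized by $\int_\Sigma\varphi$. On the flat cylinder $\mathbb{S}^1\times\mathbb{R}$ this fails: $-\Delta_\Sigma$ has spectrum down to $0$, is not Fredholm on H\"older spaces, and mean values are undefined. The other compact-case ingredients break as well. The inequality for $\tilde H(t)$ comes from integrating over leaves (Gauss--Bonnet, $\int_{\Sigma_t}\phi_t^{-1}$), and ``$\Sigma_t$ is minimizing by the first variation'' compares weighted areas; all of these are infinite or undefined for non-compact leaves.

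Contrary to what you write, the proof of Theorem \ref{thm cylinder splitting} builds no foliation. The lack of one is exactly why \cite{chodosh-splitting-2019} uses a different mechanism. Your summary of that mechanism also misses its point: since $\Sigma$ is absolutely weighted area-minimizing, the Plateau problem spanning $\partial\Sigma_h$ with the original weight just returns $\Sigma_h$.

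The missing idea is to perturb the weight, using the same $u_{r,t}$ construction as in Theorem \ref{thm cylinder splitting} but with the strict positivity in (e) now for the coefficient-2 Ricci quantity:
\begin{itemize}
\item Take $u_{r,t}<u$ on $B_{3r}(c(2r))$ and $u_{r,t}=u$ outside it.
\item Require $-\gamma u_{r,t}^{-1}\Delta_g u_{r,t}+2\operatorname{Ric}_g>0$ on the annulus $r<\operatorname{dist}(\cdot,c(2r))<3r$.
\item Keep $\Sigma$ weakly $u_{r,t}^\gamma$-weighted mean convex.
\end{itemize}
The weighted-area comparison chain then forces the $u_{r,t,h}^\gamma$-minimizers spanning $\partial\Sigma_h$ into $B_{3r}(c(2r))$. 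Letting $h\to\infty$ and then $t\to0$ produces weighted-minimizing surfaces $\Sigma_r\neq\Sigma$ that converge to $\Sigma$. The non-compact part of Theorem \ref{thm fcs ric2} applies to each $\Sigma_r$; this is where $\gamma<\tfrac72$ enters. It gives $\nabla u=0$ along every $\Sigma_r$, and hence $u$ is constant. The hypothesis then reads $\operatorname{Ric}_g\geqslant 0$, so $R_g\geqslant 0$ and $\Sigma$ is absolutely area-minimizing, and \cite{chodosh-splitting-2019} finishes the proof.

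\textbf{Step 1 also contains an error, which this route avoids.} Under only $-\gamma u^{-1}\Delta_g u+2\operatorname{Ric}_g\geqslant 0$, you cannot use $\operatorname{Ric}(\nu,\nu)\geqslant\operatorname{Ric}_g$. That leaves $-\gamma u^{-1}\Delta_g u+\operatorname{Ric}_g$, which the hypothesis does not control when $\operatorname{Ric}_g>0$. You must use rewrite III, $\operatorname{Ric}(\nu,\nu)+|A|^2=\sum_i\operatorname{Ric}(e_i,e_i)+H^2-R_\Sigma$, which absorbs $|A|^2$ entirely. Equality then yields:
\begin{itemize}
\item $H=0$, $R_\Sigma=0$, $w_\nu=0$ and $\nabla_\Sigma w=0$;
\item equality in the spectral condition along $\Sigma$;
\item that the \emph{tangential} directions realize $\operatorname{Ric}_g$.
\end{itemize}
It does not yield $A=0$, nor that $\nu$ realizes the least Ricci curvature. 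So your claims (i) and (iii), and the totally geodesic leaves and local product structure in Step 2, are not justified by stability. In the paper's route you never need $A=0$ from stability: once $u$ is known to be constant, \cite{chodosh-splitting-2019} takes over.
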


Interestingly, we find an application of Theorem \ref{thm cylinder splitting ric2} to the spectral version of the Milnor conjecture and establish
the following.


\begin{theorem}
  \label{thm-spectral-classify-results}Let $(M^3,g)$ be a complete oriented,
  non-compact 3-dimensional manifold with $- \gamma \Delta_M u + \operatorname{Ric}_M
  u \geqslant 0$ and $- \gamma \Delta_M u + 2\operatorname{Ric}_M
  u \geqslant 0$  for some $0 \leqslant \gamma < 2$, $u >0$. Then
  either $M$ is diffeomorphic to $\mathbb{R}^3$ or the universal cover
  $\tilde{M}^3$ of $M^3$ is isometric to the product $\bar{M}^2 \times
  \mathbb{R}$ where $\bar{M}^2$ is a complete 2-manifold with nonnegative
  Ricci curvature.
\end{theorem}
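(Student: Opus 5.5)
We follow the topological strategy of Liu's resolution of the Milnor conjecture in dimension three, as adapted to the spectral setting. The two hypotheses play separate roles. The condition $-\gamma\Delta u+\operatorname{Ric}\,u\geqslant 0$ with $0\leqslant\gamma<2$ feeds into the splitting theorem for spectral Ricci curvature of Antonelli--Xu and Catino et al.\ \cite{antonelli-sharp-2024,catino-criticality-2025}, which holds since $\gamma<4/(n-1)=2$ when $n=3$. The condition $-\gamma\Delta u+2\operatorname{Ric}\,u\geqslant 0$ gives access to Theorem \ref{thm fcs ric2} and Theorem \ref{thm cylinder splitting ric2}, since $\gamma<2<\tfrac72$. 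Both conditions lift to the universal cover $\tilde M$ with $\tilde u=u\circ\pi$, so we may work on $\tilde M$ when needed.

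First, if $\tilde M$ has at least two ends, the spectral Ricci splitting theorem gives $\tilde M=\bar M^2\times\mathbb{R}$ with $\operatorname{Ric}_{\bar M}\geqslant0$ and $u$ depending only on $\bar M$. So assume every cover under consideration has one end.

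Second, we run Liu's dichotomy on $M$. Either $M$ is contractible, or there is a nontrivial element of $\pi_1$ or $\pi_2$.
\begin{itemize}
\item \textbf{Case $\pi_2\neq 0$.} If a nontrivial sphere existed, we would minimize weighted area in its class to obtain a stable weighted minimal sphere. We then need to exclude this, or show it forces splitting; we expect to reduce it to the other cases via Liu's covering argument.
\item \textbf{Case $\pi_1$ contains an infinite cyclic subgroup.} Passing to the corresponding cover $\hat M$ with $\pi_1(\hat M)=\mathbb{Z}$, we build a properly embedded, absolutely weighted area-minimizing cylinder. Following \cite{chodosh-splitting-2019} and Liu, we minimize the $u^\gamma$-weighted area among surfaces with boundary a fixed loop representing the generator pushed out along an exhaustion, and pass to the limit. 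Theorem \ref{thm fcs ric2}(b) then says any stable noncompact weighted minimal surface is a plane or a flat cylinder, which rules out the other topologies of the limit. The cylinder case of Theorem \ref{thm cylinder splitting ric2} then makes $\hat M$ flat with $u$ constant, so $\tilde M=\mathbb{R}^3$ isometrically, which is a product $\mathbb{R}^2\times\mathbb{R}$.
\item \textbf{Remaining case: $M$ contractible.} We must show $M\cong\mathbb{R}^3$, i.e.\ that $M$ is simply connected at infinity. Liu's argument does this with minimizing disks bounding loops far out: such a disk must either stay near the loop, or produce in the limit an area-minimizing plane/cylinder configuration contradicting the structure above.
\end{itemize}

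The key analytic inputs are the weighted analogue of Liu's lemma that area-minimizing weighted surfaces in one-ended manifolds stay in a bounded region. For this we use the spectral Ricci volume and diameter control of \cite{AX2024}, together with the band width estimate of Theorem \ref{bm bw} applied with $\Lambda=0$ and $\Gamma>0$. Since $\gamma<2<\tfrac{n-1}{n-2}$ for $n=3$, that theorem is available.

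The main obstacle is the construction of the weighted area-minimizing cylinder or plane in the cover, and the control of where minimizers go. Unweighted Liu uses Bishop--Gromov volume comparison. Here we must replace it with the spectral substitutes, either the warped $\mu$-bubble band estimates or the diameter bound of \cite{AX2024}. I would first try to show that any weighted minimizer in a one-ended spectrally nonnegative Ricci 3-manifold cannot escape to infinity, using a $\mu$-bubble cutoff argument.
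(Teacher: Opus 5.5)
Your proposal has a genuine gap, and it sits in the case you treat as the main one. In the cover $\hat M$ with $\pi_1(\hat M)=\mathbb{Z}$ there is in general no complete, absolutely weighted area-minimizing cylinder, and the conclusion you draw from one ($\hat M$ flat, $\tilde M=\mathbb{R}^3$ isometrically) is false. Take $M=\bar M^2\times S^1$ with $\bar M^2$ a paraboloid of positive Gauss curvature and $u\equiv 1$. Every hypothesis holds, $M$ has one end and $\pi_1(M)=\mathbb{Z}$, yet $M$ is not flat. In this example:
\begin{enumerate}[(a)]
\item Minimizing among surfaces with intersection number one with the generator $\{x_0\}\times S^1$ gives the slices $\bar M^2\times\{\theta\}$, which are non-flat planes.
\item Spanning the generator and its pushed-out copies gives only half-cylinders (ray)$\times S^1$, which have boundary.
\item No complete absolutely minimizing cylinder exists at all, since Theorem \ref{thm cylinder splitting ric2} would then force $M$ to be flat.
\end{enumerate}
Theorem \ref{thm fcs ric2}(b) only says the limit surface is a plane or a flat cylinder. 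You silently discard the plane, and the plane is exactly where the work lies. Your $\pi_2\neq 0$ case is also left open. A stable weighted minimal sphere is allowed by Theorem \ref{thm fcs ric2}(a), so minimizing in a $\pi_2$ class gives no contradiction by itself; the paper instead takes $\pi_2(M)=0$ from \cite{antonelli-sharp-2024}.

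The missing idea is the paper's reduction. You do not need to redo Liu's topology in the weighted setting. You only need to show that $u$ is constant, so that $\operatorname{Ric}_g\geqslant 0$, and then quote Liu \cite{Liu-three-Invent}, which supplies the dichotomy including the $\mathbb{R}^3$ alternative. To get $u$ constant, the paper works as follows:
\begin{enumerate}[(a)]
\item Take the complete stable weighted minimal surface $\Sigma$ from the intersection-number minimization.
\item Use the stability inequality rewritten with $-\gamma u^{-1}\Delta_g u+\operatorname{Ric}_g$, in which $w_\nu^2$ enters with coefficient $\gamma-\gamma^2/2>0$ for $0<\gamma<2$.
\item When $\Sigma$ is a plane, \cite{berard-inverse-2014} gives quadratic area growth. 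The logarithmic cut-off then forces $-\gamma u^{-1}\Delta_g u+\operatorname{Ric}_g=0$, $w_\nu=0$ and $\nabla_\Sigma w=0$ along $\Sigma$.
\item Liu's conformal perturbation $g_t=e^{2tf}g$ near an arbitrary point $p$ produces such a surface through every $p$, so $u$ is constant and $\operatorname{Ric}_g\geqslant 0$.
\end{enumerate}
Finally, the confinement tools you propose do not apply here. The diameter bound of \cite{AX2024} needs a strictly positive spectral Ricci lower bound. Theorem \ref{bm bw} with $\Lambda=0$ bounds the width of a band only when $\partial_+M$ satisfies $H+\gamma u^{-1}u_{\nu_+}\geqslant\eta(t_+)>0$. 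That says nothing about where interior minimizers go.
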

\begin{remark}
    The condition $- \gamma \Delta_M u + 2\operatorname{Ric}_M
  u \geqslant 0$ in Theorem \ref{thm-spectral-classify-results} can also be replaced by $- \gamma \Delta_M u +\frac{1}{2}R_M
  u \geqslant 0$. 
\end{remark}

{\foldedcomment{+VXTNk6l22sqWpyu}{+VXTNk6l22sqWpyv}{comment}{bk21}{1760923682}{}{From
the example in {\cite[Remark 4.1]{antonelli-sharp-2024}}, we see that the
result is not true. If the manifold has two ends, it is true. In general, we
can take the quotient of the $\mathbb{Z}$ action for a line outside a large
compact set in the Antonelli-Pozzetta-Xu example to get a manifold $\hat{N}$.
$\hat{N}$ has one end and $\pi (\hat{N}) =\mathbb{Z}$. Then the universal
cover of $\hat{N}$ is not splitting. }}

Finally, we would like to remark that there is some
freedom to consider the spectral curvature condition with an extra gradient
term $c | \nabla_g u|^2 / u^2$, $c \in \mathbb{R}$ in most of the results obtained in this article, for example,
\begin{equation} \mathcal{R}_c = - \gamma u^{- 1} \Delta_g u + \tfrac{1}{2} R_g + c \gamma
  u^{- 2} | \nabla_g u|^2 . \label{spectral with c}
  \end{equation}
One can also consider $- \gamma u^{- 1} \Delta_g u
+\text{(\textit{resp. 2})}\ensuremath{\operatorname{Ric}}_{g}+ c \gamma u^{- 2} | \nabla u|^2$.
The reason
is the observation that
\[ u^{- 2} | \nabla_g u|^2 = | \nabla_g w|^2 = | \nabla_{\Sigma} w|^2 +
   w_{\nu}^2, \]
and we can run the same procedures as done in Subsection \ref{sec rewrite}. Another approach is to observe the following
\[ - \gamma u^{- 1} \Delta_g u + c \gamma \tfrac{| \nabla u|^2}{u^2} = -
   \tfrac{\gamma}{1 - c} u^{- (1 - c)} \Delta_g u^{1 - c}, \]
which turns \eqref{spectral with c} into a spectral scalar curvature.
As
a consequence, we can generalize the results which are for $c = 0$ to
the case $c \neq 0$ with suitable range of $c$ and $\gamma$.
In particular, setting $c =
1 - \gamma / 2$, $f = - \gamma \ln u$ gives the Perelman scalar curvature
\begin{align}
P & = R_g + 2 \Delta f - | \nabla f|^2 \\
& = R_g - 2 \gamma u^{- 1} \Delta_g u + (2 \gamma - \gamma^2) u^{- 2} |
\nabla u|^2
\end{align}
which was introduced by Perelman in his gradient flow formulation of the Ricci
flow. Recently, the Perelman scalar curvature has sparkled some interest, see
{\cite{chu-non-spin-2023}} and the references therein.

The article is organized as follows:

In Section \ref{sec warp}, we introduce basics of warped $\mu$-bubbles
including the first, second variation formulas.

In Section \ref{sec classification}, we make use of the weighted minimal
hypersurfaces, in particular, we prove Theorems \ref{thm weighted fcs} and
\ref{thm fcs ric2}.

In Section \ref{sec cmc}, we develop the rigidity results Theorems \ref{thm
acg}, \ref{thm acg ric1} and \ref{thm acg ric2} by using hypersurfaces of
constant weighted mean curvature.

In Section \ref{sec bw}, by selecting suitable $\mu$ we prove several band
width estimates which include a band width interpretation of the Bonnet-Myers
theorem (Theorem \ref{bm bw}).

In the final Section \ref{sec noncompact}, we show some applications of the
band width estimates and extend some of the results in the earlier sections to
the non-compact setting.

In Appendix \ref{warped product curvature}, we record some curvature
computation for a warped product and a doubly warped product.

\

{\textbf{Acknowledgment.}} X.C. has been partially supported by the National
Research Foundation of Korea (NRF) grant funded by the Korea government (MSIT)
(No. RS-2024-00337418).

\section{Basics of warped $\mu$-bubbles}\label{sec warp}

In this section, we introduce our main technical tool, warped $\mu$-bubbles
including the first, second variation formulas and the rewrites which relate
the second variation to the spectral curvature condition. The warped
$\mu$-bubbles includes weighted minimal hypersurfaces and hypersurfaces of
constant weighted mean curvature as special cases.

\subsection{Warped $\mu$-bubble in bands}\label{sec warped bubble}

Let $\Omega$ be a Caccioppoli set which contains a neighborhood of $\partial_-
M$ and disjoint from $\partial_+ M$, a positive function $u$ and a Lipschitz
function $h \in C^{0, 1} (\bar{M})$, we define
\begin{equation}
  E (\Omega) = \int_{\partial \Omega \cap \mathrm{int} M} u^{\gamma}
  \mathrm{d} \mathcal{H}^{n - 1} - \int_{\Omega} hu^{\gamma} \mathrm{d}
  \mathcal{H}^n . \label{eq E}
\end{equation}
Let $\Omega$ be a Caccioppoli set and $\Sigma$ be a connected component of
$\partial \Omega \cap \mathrm{int} M$. Let $\Sigma_t$ be a variation of
$\Sigma$ with the variation vector field given by $Y$, $\nu$ be the normal vector of $\Sigma$ pointing to $\partial_{+}M$, and $\Omega_t$ be the
Caccioppoli set enclosed by $\partial \Omega \setminus \Sigma$ and $\Sigma_t$,
then
\begin{equation}
  \tfrac{\mathrm{d}}{\mathrm{d} t} E (\Omega_t) |_{t = 0} = \int_{\Sigma} (H +
  \gamma u^{- 1} u_{\nu} - h) \langle Y, \nu \rangle u^{\gamma} \mathrm{d}
  \mathcal{H}^{n - 1} . \label{eq first var}
\end{equation}
We say that $\Omega$ is a \text{{\itshape{warped $h$-bubble}}} if $\Omega$ is
a critical point of $E$, in particular \eqref{eq first var} vanishes for all
$Y$. If $\Omega$ is a minimizer of $E$, we call $\Omega$ a minimizing
$h$-bubble. We say $\Sigma$ a \text{{\itshape{warped $h$-hypersurface}}} if $H +
\gamma u^{- 1} u_{\nu} = h$ along $\Sigma$. If $h = 0$ along $\Sigma$, we say
that $\Sigma$ is a \text{{\itshape{warped or weighted minimal hypersurface}}}.
The terminology of warped $h$-hypersurface and warped minimal hypersurface is just
for convenience.

\begin{lemma}[Existence of warped $\mu$-bubble]\label{lem-warped-bubble}
For a Riemannian band $(M^{n},g)$ with $3\leq n\leq 7$, if either $h\to \pm\infty$ on $\partial_{\mp}M$ in the functional $E(\Omega)$ 
or 
\[h|_{\partial_{-}M}>H_{\partial_{-}M}+\gamma u^{-1}u_{\nu_{-}},\quad h|_{\partial_{+}M}<H_{\partial_{+}M}+\gamma u^{-1}u_{\nu_{+}}.\]
Then there exists an $\Omega\in \mathcal{C}$ with smooth boundary such that
\[E(\Omega)=\inf_{\Omega'\in\mathcal{C}}E(\Omega'),\]
 where $\mathcal{C}$ is defined as
\[\mathcal{C}=\{\Omega:\text{ all Caccioppoli sets } \Omega \subset M \text{ and }\Omega\triangle {\Omega}_{0}\Subset \overset{\circ}{M}\}.\]
\end{lemma}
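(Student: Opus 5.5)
The plan is to reduce the weighted functional to Gromov's unweighted $\mu$-bubble existence argument, as in Zhu and Chodosh--Li, now with the smooth positive weight $u^{\gamma}$. Since $u>0$ is smooth on the compact band $\bar M$, the weight is bounded above and below: $0<c\leqslant u^{\gamma}\leqslant C$. So the first term of $E$ is comparable to perimeter, and the bulk term is bounded on sets whose symmetric difference with $\Omega_0$ stays in a fixed compact set. Hence $E$ is bounded below along any minimizing sequence, and the perimeters are uniformly bounded. BV compactness then gives an $L^1_{\mathrm{loc}}$ limit. The weighted perimeter $\int_{\partial^*\Omega}u^{\gamma}$ is lower semicontinuous, because it is the total variation of $\chi_\Omega$ with respect to the continuous positive weight. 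The bulk term is continuous in $L^1$. So the limit is a minimizer, provided the sequence can be kept from touching $\partial_\pm M$.

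Preventing the minimizing sequence from drifting to the boundary is the main obstacle, and the two hypotheses handle it in different ways.

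\emph{Case $h\to\pm\infty$ on $\partial_\mp M$.} Here $E$ is only defined on $\mathcal C$ and may be infinite near the boundary. I would follow Chodosh--Li. First choose $\epsilon$ with $|h|$ larger than $\sup|H+\gamma u^{-1}u_\nu|+1$ on the collars within distance $\epsilon$ of $\partial_\pm M$, for the foliation by equidistant hypersurfaces. Next compare a competitor $\Omega$ with $\Omega\cup\{d(\cdot,\partial_-M)<\tau\}$ and $\Omega\setminus\{d(\cdot,\partial_+M)<\tau\}$. The first variation formula \eqref{eq first var}, applied to the equidistant leaves, shows that these modifications do not increase $E$. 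This uses that the divergence of the normal field, weighted by $u^{\gamma}$, is dominated by $h$ there: on the collar, $\operatorname{div}(u^\gamma \nabla d)=u^\gamma(H+\gamma u^{-1}u_\nu)$ while $h$ beats it, and integrating via the divergence theorem gives the comparison. So one may assume every element of the minimizing sequence contains a fixed collar of $\partial_-M$ and misses a fixed collar of $\partial_+M$.

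\emph{Case of the strict boundary inequalities.} By continuity the same strict inequalities hold on leaves in a thin collar. Then the identical calibration argument, using $\operatorname{div}(u^\gamma\nabla d)$ against $hu^\gamma$ on the collar, gives the same barrier conclusion. The orientation conventions for $\nu_\pm$ are as fixed in the introduction.

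With the sequence confined, the limit $\Omega$ lies in $\mathcal C$ and minimizes $E$. For regularity, $\Omega$ is locally a weighted perimeter minimizer with a bounded forcing term, i.e.\ an almost-minimizer of the perimeter for the conformally equivalent metric $u^{2\gamma/(n-1)}g$. Standard regularity theory for almost-minimizing boundaries then gives $\partial\Omega$ smooth away from a singular set of codimension at least $8$, which is empty since $n\leqslant7$. Because the weighted mean curvature is Lipschitz-prescribed, $H=h-\gamma u^{-1}u_\nu$, we get $C^{2,\alpha}$ regularity, which suffices.
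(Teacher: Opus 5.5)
Your proposal is correct, but it takes a different route from the paper. The paper does no direct-method work. For the case $h\to\pm\infty$ it simply cites Chodosh--Li's Proposition 12, which is already stated for a positively weighted functional, so $u^{\gamma}$ can be inserted. For the strict-barrier case it passes to the conformal metric $\tilde g=u^{2\gamma/(n-1)}g$. For this metric, $u^{\gamma}\,\mathrm{d}\mathcal H^{n-1}$ and $u^{n\gamma/(n-1)}\,\mathrm{d}\mathcal H^{n}$ are the area and volume elements, and $u^{-\gamma/(n-1)}(H+\gamma u^{-1}u_\nu)$ is the mean curvature. Thus $E$ is literally the unweighted $\mu$-bubble functional of $(M,\tilde g)$ with $\tilde h=u^{-\gamma/(n-1)}h$. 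The hypotheses become exactly the usual strict barrier conditions for $(M,\tilde g,\tilde h)$, and existence together with regularity is quoted from Zhu's Proposition~2.1 and Gromov.

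You instead re-prove existence in the weighted setting, using $BV$ compactness, lower semicontinuity of the weighted perimeter, and the calibration $\operatorname{div}(u^{\gamma}\nabla d)=u^{\gamma}(H+\gamma u^{-1}u_\nu)$ on equidistant collars. The calibration shows that $(\Omega\cup C^-)\setminus C^+$ is no worse than $\Omega$, and it handles both hypotheses by one mechanism. The conformal metric enters your argument only at the regularity step.

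The paper's reduction is shorter, and it explains why the weighted barrier inequalities are the natural ones: they are the unweighted ones for $\tilde g$. Your argument is self-contained and makes the barrier mechanism explicit. However, since you already pass to $\tilde g$ for regularity, doing so from the outset would have spared you the compactness and barrier steps.

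One minor point concerns regularity. With $h$ only Lipschitz, your $C^{2,\alpha}$ conclusion is the honest one. The ``smooth'' in the statement should be read as requiring $h$ smooth, in which case your bootstrap does give smoothness.
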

\begin{proof}
    From \cite[Proposition 12]{chodosh-generalized-2024}, the case $h\to \pm\infty$ is proved. We only need to prove the other case: \[h|_{\partial_{-}M}>H_{\partial_{-}M}+\gamma u^{-1}u_{\nu_{-}},\quad h|_{\partial_{+}M}<H_{\partial_{+}M}+\gamma u^{-1}u_{\nu_{+}}.\]
    Since $u$ is a positive function, we have
    \begin{align*}
        u^{-\gamma/(n-1)}h|_{\partial_{-}M}&>u^{-\gamma/(n-1)}\left(H_{\partial_{-}M}+\gamma u^{-1}u_{\nu_{-}}\right)\\
        u^{-\gamma/(n-1)}h|_{\partial_{+}M}&<u^{-\gamma/(n-1)}\left(H_{\partial_{+}M}+\gamma u^{-1}u_{\nu_{+}}\right).
    \end{align*}
    We rewrite $E(\Omega)$ as
    \begin{equation*}
  E (\Omega) = \int_{\partial \Omega \cap \mathrm{int} M} u^{\gamma}
  \mathrm{d} \mathcal{H}^{n - 1} - \int_{\Omega} u^{-\gamma/(n-1)}h u^{\tfrac{n}{n-1}\gamma} \mathrm{d}
  \mathcal{H}^n . \label{eq rewrite E}
\end{equation*}
Note that $u^{\tfrac{n}{n-1}\gamma} \mathrm{d}
  \mathcal{H}^n$ is the volume element  and $u^{\gamma}
  \mathrm{d} \mathcal{H}^{n - 1}$ is the area element of the metric $\tilde{g}=u^{\tfrac{2\gamma}{n-1}}g$ respectively. And by directly computing, we obtain that 
  \[u^{-\gamma/(n-1)}\left(H_{\partial_{\pm}M}+\gamma u^{-1}u_{\nu_{\pm}}\right)\] is the mean curvature of the metric $\tilde{g}=u^{\tfrac{2\gamma}{n-1}}g$ on $\partial M_{\pm}$. Then the result follows by  \cite[Proposition 2.1]{zhu2021} and \cite[Section 5.1]{Gromov2023} for the Riemannian band $(M,\tilde{g})$ and the function $u^{-\gamma/(n-1)}h$.
\end{proof}
Given a warped $h$-hypersurface $\Sigma$, we can calculate the first variation (or
linearisation) of $H + \gamma u^{- 1} u_{\nu} - h$ along $Y$ with $\phi=\langle Y,\nu\rangle$, and we obtain
\begin{align}
& \delta (H + \gamma u^{- 1} u_{\nu} - h) \\
= & - \Delta_{\Sigma} \phi + (- |A|^2 - \operatorname{Ric} (\nu, \nu) - \gamma
u^{- 2} u_{\nu}^2 + \gamma u^{- 1} \nabla^2_{\nu \nu} u - h_{\nu}) \phi -
\gamma u^{- 1} \langle \nabla_{\Sigma} \phi, \nabla_{\Sigma} u \rangle
\\
= & - \Delta_{\Sigma} \phi + (- |A|^2 - \operatorname{Ric} (\nu, \nu) - \gamma
u^{- 2} u_{\nu}^2 + \gamma u^{- 1}  (\Delta_g u - H u_{\nu} -
\Delta_{\Sigma} u) - h_{\nu}) \phi \\
& \quad - \gamma u^{- 1} \langle \nabla_{\Sigma} \phi, \nabla_{\Sigma} u
\rangle := L_{\Sigma} \phi . \label{eq first variation H tilde}
\end{align}
In the above, we have used that $\nabla^2_{\nu \nu} = \Delta_g u - H u_{\nu} -
\Delta_{\Sigma} u$. It follows immediately that given a critical point of $E$,
we have the second variation
\[ \tfrac{\mathrm{d}^2}{\mathrm{d} t^2} E (\Omega_t) |_{t = 0} = \int_{\Sigma}
   u^{\gamma} \phi L_{\Sigma} \phi, \label{eq second var} \]
where $\Sigma$, $\Sigma_t$, $\Omega_t$ and $L_{\Sigma}$ are given as above,
see {\cite{AX2024,chai-band-2025}}.

\subsection{Variation of a warped $h$-hypersurface}

\begin{definition}
  \label{def stable warp bdry}We say that $\Sigma$ a warped $h$-hypersurface is
  stable if there exists a positive function $\phi$ such that $L \phi \geqslant
  0$.
\end{definition}

The eigenvalue with least real part is called the \text{{\itshape{principal}}}
\text{{\itshape{eigenvalue}}}. By Krein-Rutman theorem (see
{\cite{andersson-local-2005}}), the principal eigenvalue of $L_{\Sigma}$ is
real and the corresponding eigenfunction has a sign which we choose here to be
positive. Equivalently, if $\Sigma$ is stable, then the principal eigenvalue
is non-negative.

\begin{lemma}
  \label{lm non stable to barrier}If $\Sigma$ is a non-stable warped
  $h$-hypersurface, then there exists a hypersurface $\Sigma_-$ which lies in the
  side of $\Sigma$ which $\nu$ points into and $H_{\Sigma_-} + \gamma u^{- 1}
  \langle \nabla u, \nu_{\Sigma_-} \rangle - h < 0$.
\end{lemma}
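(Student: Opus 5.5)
Since $\Sigma$ is not stable, Definition \ref{def stable warp bdry} fails, so the principal eigenvalue $\lambda_1$ of $L_\Sigma$ is negative. By the Krein--Rutman discussion above, $\lambda_1$ is real and has an eigenfunction $\phi>0$ with $L_\Sigma\phi=\lambda_1\phi<0$ on $\Sigma$. When $\Sigma$ is noncompact, or to avoid boundary issues, I would pass to a large compact subdomain where the Dirichlet principal eigenvalue is still negative. The main case, though, is a closed $\Sigma$ in the interior of the band, which is where the lemma is used.

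I will deform $\Sigma$ along its normal field by this eigenfunction. Concretely, for small $s>0$ set
\[ \Sigma_s=\{\exp_x(s\phi(x)\nu(x)) : x\in\Sigma\}, \]
which is a smooth embedded hypersurface for small $s$ on the side into which $\nu$ points. I give $\Sigma_s$ the unit normal $\nu_s$ that depends continuously on $s$ and equals $\nu$ at $s=0$.

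Let $F(s)=H_{\Sigma_s}+\gamma u^{-1}\langle\nabla u,\nu_s\rangle-h$, pulled back to $\Sigma$. Then $F(0)=0$ because $\Sigma$ is a warped $h$-hypersurface. The derivative $F'(0)$ is exactly the linearisation \eqref{eq first variation H tilde}, so $F'(0)=L_\Sigma\phi=\lambda_1\phi$. Taylor expansion gives $F(s)=s\lambda_1\phi+O(s^2)$, with the error uniform on $\Sigma$ by smoothness of the family. Since $\min_\Sigma\phi>0$ and $\lambda_1<0$, we get $F(s)<0$ for all sufficiently small $s>0$. Taking $\Sigma_-=\Sigma_s$ for such an $s$ proves the lemma.

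The main obstacle is regularity. Because $h$ is only Lipschitz, $F$ need not be $C^2$ in $s$, so the Taylor remainder must be justified another way. I would write $h(\Sigma_s)-h(\Sigma)=s\phi h_\nu+o(s)$ at points of differentiability. Alternatively, one can assume $h$ is smooth near $\Sigma$, which is how the lemma is used. With either choice, the $o(s)$ error is dominated by $s|\lambda_1|\min_\Sigma\phi$.
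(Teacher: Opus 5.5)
Your proposal is correct and follows the paper's proof. Non-stability gives $L_\Sigma\phi=\lambda_1\phi<0$ for the positive principal eigenfunction $\phi$, and pushing $\Sigma$ off by $\phi\nu$ then yields $H+\gamma u^{-1}u_{\nu}-h=sL_\Sigma\phi+O(s^2)<0$ for small $s>0$ (the paper flows an extension $Y$ of $\phi\nu$ rather than using $\exp_x(s\phi\nu)$, which agrees to first order). Of your two regularity fixes, only the second works: a pointwise expansion at differentiability points of a merely Lipschitz $h$ gives no uniform remainder, and $L_\Sigma$ itself already requires $h_\nu$ along $\Sigma$, so assuming $h$ is $C^1$ near $\Sigma$ is what the paper's Taylor expansion implicitly uses.
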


\begin{proof}
Let $\phi > 0$ be the principal eigenfunction of
$L_{\Sigma}$. Since $\Sigma$ is non-stable, $L \phi < 0$. Let $Y$ be a vector
field defined in an open neighborhood of $\Sigma$ and such that $Y = \phi \nu$
along $\Sigma$ and $\Phi_t$ be the local flow of $Y$, set $\Sigma_t = \Phi_t
(\Sigma)$. By the Taylor expansion,
\[ H_{\Sigma_t} + \gamma u^{- 1} \langle \nabla u, \nu_{\Sigma_t} \rangle - h
   = tL_{\Sigma} \phi + O (t^2) < 0 \]
for all $t > 0$ sufficiently small. Taking $\Sigma_- = \Sigma_t$ for a fixed
small $t$ finishes the proof.\end{proof}

\begin{lemma}
  \label{lm foliation}If $\Sigma$ is a warped $h$-hypersurface such that $\delta (H
  + \gamma u^{- 1} u_{\nu} - h) = - \Delta_{\Sigma} \phi$, then there exists a
  foliation $\{\Sigma_t \}_{t \in (- \varepsilon, \varepsilon)}$ such that
  $H_{\Sigma_t} + \gamma u^{- 1} u_{\nu_t} - h$ is constant along each
  $\Sigma_t$ (i.e., depending only on $t$).
\end{lemma}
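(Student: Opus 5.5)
We will follow the standard implicit function theorem argument for constructing a CMC-type foliation near an infinitesimally rigid hypersurface (as in Bray--Brendle--Neves and Andersson--Cai--Galloway), adapted to the weighted operator. The hypothesis $\delta(H+\gamma u^{-1}u_\nu - h) = -\Delta_\Sigma \phi$ says that the linearization $L_\Sigma$ from \eqref{eq first variation H tilde} is exactly $-\Delta_\Sigma$. This operator is self-adjoint with kernel consisting of the constants, and its range is the set of functions of mean zero with respect to $\mathrm{d}\mathcal{H}^{n-1}$.

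Fix $\alpha\in(0,1)$ and the Banach space $X=\{\psi\in C^{2,\alpha}(\Sigma):\int_\Sigma \psi =0\}$. For $\psi\in X$ small and $t\in\mathbb{R}$ small, let $\Sigma_{t+\psi}$ be the normal graph $\{\exp_x((t+\psi(x))\nu(x)) : x\in\Sigma\}$, defined via a fixed tubular neighbourhood. Define
\[
\Psi(t,\psi)=\Big(H_{\Sigma_{t+\psi}}+\gamma u^{-1}u_{\nu_{t+\psi}}-h\Big)\circ \text{(graph map)} - \frac{1}{|\Sigma|}\int_\Sigma \Big(H_{\Sigma_{t+\psi}}+\gamma u^{-1}u_{\nu_{t+\psi}}-h\Big)\circ \text{(graph map)}\,\mathrm{d}\mathcal{H}^{n-1},
\]
which maps a neighbourhood of $(0,0)$ in $\mathbb{R}\times X$ into $Y=\{f\in C^{0,\alpha}(\Sigma):\int_\Sigma f=0\}$. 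This map is $C^1$. Here the smoothness of $u$ is used, and a Lipschitz $h$ would be problematic; in the applications $h$ is constant, or at least smooth near $\Sigma$, and we assume enough regularity of $h$ for $\Psi$ to be $C^1$ into $C^{0,\alpha}$. We have $\Psi(0,0)=0$ because $\Sigma$ is a warped $h$-hypersurface.

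The partial derivative in $\psi$ at $(0,0)$ is $D_\psi\Psi(0,0)\psi = -\Delta_\Sigma\psi$, since the projection removes nothing: the integral of $\Delta_\Sigma\psi$ vanishes. By Schauder theory and the Fredholm alternative, $-\Delta_\Sigma: X\to Y$ is an isomorphism. The implicit function theorem therefore gives a $C^1$ curve $t\mapsto\psi(t)\in X$ for $|t|<\varepsilon$ with $\psi(0)=0$ and $\Psi(t,\psi(t))=0$. That is, $H+\gamma u^{-1}u_\nu-h$ on $\Sigma_t:=\Sigma_{t+\psi(t)}$ equals its average, so it is a constant depending only on $t$.

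It remains to check that $\{\Sigma_t\}$ is a foliation. Differentiate $\Psi(t,\psi(t))=0$ at $t=0$, using $D_t\Psi(0,0)=-\Delta_\Sigma 1$ minus its mean, which is $0$. This gives $-\Delta_\Sigma\psi'(0)=0$, so $\psi'(0)$ is constant, and being in $X$ it equals $0$. Thus the normal speed $\partial_t(t+\psi(t))|_{t=0}=1>0$. Shrinking $\varepsilon$, the speed stays positive, so the map $(x,t)\mapsto \exp_x((t+\psi(t)(x))\nu)$ is a diffeomorphism onto a neighbourhood of $\Sigma$, and the $\Sigma_t$ foliate it.

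The main technical point is the correct setup: projecting onto mean-zero functions so that the constant kernel of $-\Delta_\Sigma$ is absorbed by the parameter $t$. Beyond that, one must verify that the weighted term $\gamma u^{-1}u_\nu$ contributes a $C^1$ dependence on $\psi$; this is routine since $u$ is smooth and positive.
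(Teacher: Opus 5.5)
Your proposal is correct and follows the same route as the paper. The paper defers to \cite[Lemma 3.4]{chai-band-2025}, noting that only $H+\gamma u^{-1}u_{\nu}-h=0$ on $\Sigma$ and linearization $-\Delta_\Sigma$ are needed; that cited lemma is presumably the standard Bray--Brendle--Neves / Andersson--Cai--Galloway implicit function theorem argument on normal graphs with mean-zero $C^{2,\alpha}$ perturbations, which is what you give, including the check that $\psi'(0)=0$ so the leaves foliate. The assumptions you make implicit (closed, connected $\Sigma$, and enough regularity of $h$ near $\Sigma$ for $\Psi$ to be $C^1$) are the ones the lemma tacitly needs wherever it is applied.
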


For the proof, see {\cite[Lemma 3.4]{chai-band-2025}} where actually only the
facts that $H + \gamma u^{- 1} u_{\nu} - h = 0$ and that $\delta (H + \gamma
u^{- 1} u_{\nu} - h) = - \Delta_{\Sigma} \phi$ were needed. In particular, such a
foliation exists if $\Sigma$ is infinitesimally rigid. Infinitesimal
rigidity of a warped $h$-hypersurface $\Sigma$ is a condition stronger than the
condition that $\Sigma$ satisfies that $\delta (H + \gamma u^{- 1} u_{\nu} - h) = -
\Delta_{\Sigma} \phi$. However, the infinitesimal rigidity is a condition which
depends on the context, and to save the bother of stating the condition of
infinitesimal rigidity for every case, we will refer to Lemma \ref{lm
foliation}.

We find it useful to have the following (cf. {\cite[Lemma
5.2]{andersson-area-2009}}).

\begin{lemma}
  \label{lm flow to barrier}If $\Sigma$ satisfies $H + \gamma u^{- 1} u_{\nu}
  - h \lneqq 0$, there exists a hypersurface $\Sigma_-$ near $\Sigma$ lying to
  the side which $\nu$ points into such that $H_{\Sigma_-} + \gamma u^{- 1}
  u_{\nu_-} - h < 0$.
\end{lemma}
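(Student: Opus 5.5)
The idea is to push $\Sigma$ a little to the $\nu$-side with a normal speed chosen so that the weighted mean curvature defect becomes strictly negative everywhere, in the same spirit as Lemma \ref{lm non stable to barrier}. Set $f := H + \gamma u^{-1}u_\nu - h$ on $\Sigma$. By hypothesis $f \leqslant 0$ and $f \not\equiv 0$. Assume $\Sigma$ is compact and connected, as in the band setting.

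First, solve a linear equation on $\Sigma$ to find $\phi$ with $L_\Sigma \phi < -f$ in a suitable sense. Concretely, look for a positive function $\phi$ with $L_\Sigma\phi = -f + c$ for some small constant, or simply with $L_\Sigma \phi$ strictly negative wherever $f=0$. To do this, let $\lambda_1$ be the principal eigenvalue of $L_\Sigma$ and $\phi_1>0$ its eigenfunction, which exists by the Krein--Rutman theorem.

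If $\lambda_1<0$, take $\phi=\phi_1$. Then $L_\Sigma\phi_1<0$.

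If $\lambda_1 \geqslant 0$, perturb the operator: replace $L_\Sigma$ by $L_\Sigma+\epsilon$ so that it becomes invertible with positive principal eigenvalue. By the maximum principle for operators with positive principal eigenvalue, $L_\Sigma$ is invertible (apart from a zero-eigenvalue case, handled separately below). Solve $(L_\Sigma+\epsilon)\psi = f \leqslant 0$, $f\not\equiv0$. Because of the positive principal eigenvalue, the strong maximum principle (applied after dividing by $\phi_1$ to remove the zeroth-order term) gives $\psi<0$. Set $\phi=-\psi>0$, so $L_\Sigma\phi = -f+\epsilon\psi\cdot(-1)$ up to sign bookkeeping. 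The aim is $L_\Sigma \phi + f<0$ pointwise. In the case $\lambda_1=0$ exactly, the Fredholm alternative blocks a direct solve. I would then use $\phi = \phi_1 + \delta\psi$, with $\psi$ solving $L_\Sigma\psi = f - \bar c\,\phi_1$, where $\bar c$ is chosen via the adjoint eigenfunction so that solvability holds. Since $f\lneqq 0$ pairs negatively with the positive adjoint eigenfunction, $\bar c<0$, and this gives $L_\Sigma \phi<0$ strictly for small $\delta$.

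Next, deform. Extend $\phi\nu$ to a vector field $Y$ and let $\Sigma_t$ be its flow. Taylor expansion gives
\[ H_{\Sigma_t} + \gamma u^{-1}\langle\nabla u,\nu_{\Sigma_t}\rangle - h = f + t L_\Sigma\phi + O(t^2). \]
Where $f<0$, the quantity stays negative for small $t$ by continuity. Where $f=0$, we have $L_\Sigma\phi<0$ (arranged in the previous step to hold on a neighborhood of $\{f=0\}$). By compactness there is a uniform $t>0$ for which the whole expression is negative, and $\Sigma_-=\Sigma_t$ then works.

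The main obstacle is the linear step: producing $\phi>0$ with $f+tL_\Sigma\phi<0$ uniformly, given that $L_\Sigma$ is non-self-adjoint because of the drift term $-\gamma u^{-1}\langle\nabla_\Sigma\phi,\nabla_\Sigma u\rangle$. A cleaner alternative is to conjugate: writing $\phi = u^{-\gamma/2}\tilde\phi$ (or using the weight $u^\gamma$) makes the operator symmetric with respect to $u^\gamma d\mathcal H^{n-1}$. Then the Fredholm alternative and the sign of the principal eigenfunction are standard, and the case analysis above goes through.
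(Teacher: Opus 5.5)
Your route differs from the paper's and works once the linear step is corrected.

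\textbf{The paper's route.} The paper solves no elliptic problem. It runs the flow $\partial_t x = -(H+\gamma u^{-1}u_\nu-h)\nu$. Along this flow, $\tilde H$ satisfies the linear parabolic equation $\partial_t\tilde H=\Delta_{\Sigma_t}\tilde H-Q\tilde H+\gamma u^{-1}\langle\nabla_{\Sigma_t}\tilde H,\nabla_{\Sigma_t}u\rangle$, which comes from \eqref{eq first variation H tilde}. The parabolic strong maximum principle applied to $e^{-Kt}\tilde H$ then gives $\tilde H<0$ for small $t>0$. This avoids any case analysis on the principal eigenvalue, but needs short-time existence and regularity for a quasilinear flow.

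\textbf{Your route.} You use only linear elliptic theory on the fixed compact $\Sigma$, plus the same first-order Taylor expansion as in Lemma \ref{lm non stable to barrier}. The output $\Sigma_-$ is an explicit normal graph of $t\phi$ with $\phi>0$. So it lies strictly on the $\nu$-side and comes with a quantitative bound on $\tilde H$.

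\textbf{The linear step needs fixing.} The $\epsilon$-shift alone handles every value of $\lambda_1$, so the case split is unnecessary.
\begin{itemize}
\item Take $\epsilon>\max\{0,-\lambda_1\}$. Then $L_\Sigma+\epsilon$ has principal eigenvalue $\lambda_1+\epsilon>0$, is invertible, and satisfies the strong maximum principle. Hence $(L_\Sigma+\epsilon)\psi=f\lneqq 0$ gives $\psi<0$ on connected $\Sigma$.
\item Set $\phi:=-\psi>0$. Then $L_\Sigma\phi=-f-\epsilon\phi$, not $-f+\epsilon\phi$ as your sign bookkeeping suggests. Therefore
\[
f+tL_\Sigma\phi=(1-t)f-t\epsilon\phi\leqslant -t\epsilon\min_\Sigma\phi ,
\]
and the $O(t^2)$ remainder is absorbed for small $t$.
\item The case $\lambda_1=0$ is not blocked by the Fredholm alternative, because you invert $L_\Sigma+\epsilon$, not $L_\Sigma$.
\item Your separate construction for $\lambda_1=0$ has the wrong sign. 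With $L_\Sigma\psi=f-\bar c\phi_1$ and $\bar c<0$, you get $L_\Sigma(\phi_1+\delta\psi)=\delta(f+|\bar c|\phi_1)>0$ on $\{f=0\}$. It would need $L_\Sigma\psi=-f+\bar c\phi_1$ instead. Better to drop this branch entirely.
\end{itemize}

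\textbf{Non-self-adjointness is not an obstacle.} Write $V$ for the zeroth-order coefficient in \eqref{eq first variation H tilde}. Then
\[
L_\Sigma\phi=-u^{-\gamma}\operatorname{div}_\Sigma(u^\gamma\nabla_\Sigma\phi)+V\phi ,
\]
so $L_\Sigma$ is symmetric in $L^2(\Sigma,u^\gamma\,\mathrm{d}\mathcal H^{n-1})$. This matches the second variation $\int_\Sigma u^\gamma\phi L_\Sigma\phi$. Neither Krein--Rutman nor an adjoint eigenfunction is needed.

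Finally, like the paper, you need $\Sigma$ compact and connected; otherwise $f$ could vanish identically on one component. This holds in the application to $\partial_\pm M$.
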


\begin{proof}
We run the mean curvature flow
\[ \partial_t x = - (H + \gamma u^{- 1} u_{\nu} - h) \nu, \text{ } x \in S \]
starting from $\Sigma$. Here $S$ is a manifold diffeomorphic to $\Sigma$. Let
$\Sigma_t = x (t, S)$. By writing the equation as a graph of a function $u$
over $\Sigma$, we see only $- H \nu$ contains second order derivatives of $u$,
hence the flow is a quasi-linear parabolic equation. By standard theory, the
flow exists in a short time interval $[0, t_0)$. We have the evolution
equation for $\tilde{H}$ that
\begin{align}
& \partial_t  \tilde{H} \\
= & \Delta_{\Sigma_t}  \tilde{H} - (- |A|^2 - \operatorname{Ric} (\nu, \nu) -
\gamma u^{- 2} u_{\nu}^2 + \gamma u^{- 1} \nabla^2_{\nu \nu} u - h_{\nu})
\tilde{H} + \gamma u^{- 1} \langle \nabla_{\Sigma_t}  \tilde{H},
\nabla_{\Sigma_t} u \rangle \\
= : & \Delta_{\Sigma_t}  \tilde{H} - Q (x, t) \tilde{H} + \gamma u^{- 1}
\langle \nabla_{\Sigma_t}  \tilde{H}, \nabla_{\Sigma_t} u \rangle
\end{align}
using the short hand $\tilde{H} = H + \gamma u^{- 1} u_{\nu} - h$ and the
first variation of $\tilde{H}$ \eqref{eq first variation H tilde}. Here, $x
\in \Sigma_t$. By the parabolic regularity theory, \(Q\) is smooth on \([0,t_0)\), and we can assume that $Q$ is bounded
on $[0, t_0 / 2]$. We take
\[ K > \max_{x \in \Sigma_t, t \in [0, t_0]} |Q (x, t) |. \]
Then
\[ (\partial_t - \Delta_{\Sigma_t}) (e^{- Kt}  \tilde{H}) = \gamma u^{- 1}
   \langle \nabla_{\Sigma_t} (e^{- Kt}  \tilde{H}), \nabla_{\Sigma_t} u
   \rangle - (Q + K) (e^{- Kt}  \tilde{H}), \]
and the coefficient of the zeroth term is negative. Hence, by the strong
maximum principle of parabolic equations, $e^{- Kt}  \tilde{H} < 0$ for all $t
\in (0, t_0 / 2]$. Take any $\Sigma_t$, $t \in [0, t_0 / 2)$ as $\Sigma_-$
would suffice.\end{proof}

\subsection{Rewrite of second variation}\label{sec rewrite}

\begin{lemma}
  \label{lm rewrite prelim}The second variation \eqref{eq second var} of the
  functional $E (\Omega)$ can be rewritten as
\begin{align}
\tfrac{\mathrm{d}^2}{\mathrm{d} t^2} E (\Omega_t) |_{t = 0} & =
\tfrac{4}{4 - \gamma} \int_{\Sigma} | \nabla_{\Sigma} \psi |^2 -
\int_{\Sigma} (1 - \tfrac{\gamma}{4}) \gamma \left| \psi \nabla_{\Sigma} w
- \tfrac{1}{2 (1 - \gamma / 4)} \nabla_{\Sigma} \psi \right|^2 \\
& \quad + \int_{\Sigma} (\gamma u^{- 1} \Delta_g u - (|A|^2 +
\operatorname{Ric} (\nu, \nu))) \psi^2 \\
& \quad - \int_{\Sigma} (\gamma Hw_{\nu} + h_{\nu} + \gamma w_{\nu}^2)
\psi^2,
\end{align}
  where $\psi = \phi u^{\gamma / 2}$ and $w = \log u$.
\end{lemma}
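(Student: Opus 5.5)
Starting from the second variation \eqref{eq second var}, $\int_\Sigma u^\gamma \phi L_\Sigma\phi$, I substitute $\phi = \psi u^{-\gamma/2} = \psi e^{-\gamma w/2}$, integrate the Laplacian term by parts, and complete a square in the gradient terms. I would first expand $L_\Sigma\phi$ from \eqref{eq first variation H tilde}. The zeroth-order part is
\[
\gamma u^{-1}\Delta_g u - (|A|^2+\operatorname{Ric}(\nu,\nu)) - \gamma u^{-2}u_\nu^2 - \gamma H u^{-1}u_\nu - \gamma u^{-1}\Delta_\Sigma u - h_\nu .
\]
Writing $u^{-1}u_\nu = w_\nu$ and $u^{-2}u_\nu^2=w_\nu^2$, this already produces the last two lines of the claimed formula, except for the term $-\gamma u^{-1}\Delta_\Sigma u$. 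I would rewrite that term as $-\gamma(\Delta_\Sigma w + |\nabla_\Sigma w|^2)$.

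For the derivative terms, integrating by parts gives
\[
\int_\Sigma u^\gamma \phi\bigl(-\Delta_\Sigma\phi - \gamma\langle\nabla_\Sigma\phi,\nabla_\Sigma w\rangle\bigr) = \int_\Sigma u^\gamma|\nabla_\Sigma\phi|^2 ,
\]
because $\operatorname{div}(u^\gamma\nabla\phi) = u^\gamma(\Delta\phi + \gamma\langle\nabla w,\nabla\phi\rangle)$. So the first-order drift cancels exactly. The term $-\gamma\int u^\gamma\phi^2\Delta_\Sigma w$ integrates to $\gamma\int\langle\nabla(u^\gamma\phi^2),\nabla w\rangle$. Altogether the gradient part becomes
\[
\int u^\gamma\Bigl(|\nabla\phi|^2 + 2\gamma\phi\langle\nabla\phi,\nabla w\rangle + \gamma^2\phi^2|\nabla w|^2 - \gamma\phi^2|\nabla w|^2\Bigr).
\]

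Next I convert to $\psi$. With $\nabla\phi = u^{-\gamma/2}\bigl(\nabla\psi - \tfrac{\gamma}{2}\psi\nabla w\bigr)$, the integrand becomes
\[
|\nabla\psi|^2 + \gamma\psi\langle\nabla\psi,\nabla w\rangle + \bigl(\tfrac{\gamma^2}{4} - \gamma\bigr)\psi^2|\nabla w|^2 .
\]
Finally I complete the square:
\[
-(1-\tfrac{\gamma}{4})\gamma\Bigl|\psi\nabla w - \tfrac{1}{2(1-\gamma/4)}\nabla\psi\Bigr|^2 = -(\gamma - \tfrac{\gamma^2}{4})\psi^2|\nabla w|^2 + \gamma\psi\langle\nabla\psi,\nabla w\rangle - \tfrac{\gamma}{4(1-\gamma/4)}|\nabla\psi|^2 .
\]
The leftover coefficient on $|\nabla\psi|^2$ is $1+\tfrac{\gamma}{4-\gamma} = \tfrac{4}{4-\gamma}$, which matches the statement.

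The step most likely to cause trouble is the bookkeeping of the $\Delta_\Sigma u$ term. One has to check that the $|\nabla_\Sigma w|^2$ coming from $u^{-1}\Delta_\Sigma u$ combines with the cross terms with the correct signs; I expect the coefficient $\tfrac{\gamma^2}{4}-\gamma$ to emerge precisely there. The remaining caveat is that the boundary terms from the integrations by parts must vanish, which holds because $\Sigma$ is closed (or $\phi$ is compactly supported).
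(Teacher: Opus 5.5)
Your proposal is correct and takes essentially the paper's approach. The paper quotes the intermediate form $\int_\Sigma \bigl(|\nabla_\Sigma\psi|^2+\gamma\psi\langle\nabla_\Sigma w,\nabla_\Sigma\psi\rangle+(\tfrac{\gamma^2}{4}-\gamma)\psi^2|\nabla_\Sigma w|^2\bigr)+\dots$ from \cite[Lemma 2.4]{chai-band-2025} and then completes exactly the same square, whereas you derive that intermediate identity yourself (weighted integration by parts, $u^{-1}\Delta_\Sigma u=\Delta_\Sigma w+|\nabla_\Sigma w|^2$, and $\phi=\psi u^{-\gamma/2}$), and your bookkeeping checks out.
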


\begin{proof}
From {\cite[Lemma 2.4]{chai-band-2025}}, we see
\begin{align}
\tfrac{\mathrm{d}^2}{\mathrm{d} t^2} E (\Omega_t) |_{t = 0} & =
\int_{\Sigma} | \nabla_{\Sigma} \psi |^2 + \int_{\Sigma} (\gamma \psi
\langle \nabla_{\Sigma} w, \nabla_{\Sigma} \psi \rangle +
(\tfrac{\gamma^2}{4} - \gamma) \psi^2 | \nabla_{\Sigma} w|^2) \label{eq
cs25} \\
& \quad + \int_{\Sigma} (\gamma u^{- 1} \Delta_g u - (|A|^2 + \operatorname{Ric}
(\nu, \nu))) \psi^2 \\
& \quad - \int_{\Sigma} (\gamma Hw_{\nu} + h_{\nu} + \gamma w_{\nu}^2)
\psi^2 .
\end{align}
The following identity
\begin{align}
& (\tfrac{\gamma^2}{4} - \gamma) \psi^2 | \nabla_{\Sigma} w|^2 + \gamma
\psi \langle \nabla_{\Sigma} w, \nabla_{\Sigma} \psi \rangle \\
= & \tfrac{1}{4} (1 - \tfrac{\gamma}{4})^{- 1} \gamma | \nabla_{\Sigma} \psi
|^2 - (1 - \tfrac{\gamma}{4}) \gamma \left| \psi \nabla_{\Sigma} w +
\tfrac{1}{2 (\gamma / 4 - 1)} \nabla_{\Sigma} \psi \right|^2
\end{align}
finishes the proof.\end{proof}

Now we set $w = \log u$ and
\begin{equation}
  Z = - \gamma u^{- 1} \Delta_g u + (|A|^2 + \operatorname{Ric} (\nu, \nu)) + \gamma
  Hw_{\nu} + h_{\nu} + \gamma w_{\nu}^2 . \label{eq Z}
\end{equation}
First, in proving various band width estimates, the function $h$ will be
chosen as $\eta \circ \rho$ where $\eta$ is a non-increasing function and
$\rho$ a Lipschitz function with $| \nabla \rho | \leqslant 1$; we can choose
$\eta$ constant in other cases. So
\[ h_{\nu} = \eta' \circ \rho \langle \nabla \rho, \nu \rangle = \eta' \circ
   \rho (\langle \nabla \rho, \nu \rangle - 1) + \eta' \circ \rho . \]
For different spectral curvature conditions, we rewrite $Z$ in different ways
where $H + \gamma w_{\nu} = h$.

\text{{\bfseries{I}}}. Case $- \gamma u^{- 1} \Delta_g u + \operatorname{Ric}$
(e.g., for Theorem \ref{bm bw}) with $0 \leqslant \gamma < \tfrac{4}{n - 1}$:
we use $|A|^2 = (|A|^2 - \tfrac{1}{n - 1} H^2) + \tfrac{1}{n - 1} H^2$, $H +
\gamma w_{\nu} = h$ in \eqref{eq Z}, and with suitable rearrangement, we
obtain that
\begin{align}
W : & = (- \gamma u^{- 1} \Delta_g u + \operatorname{Ric}_{g} (\nu,\nu)) + (|A|^2 -
\tfrac{1}{n - 1} H^2) \label{eq W ric1} \\
& \quad + \tfrac{1}{n - 1} (h - \gamma w_{\nu})^2 + \gamma (h - \gamma
w_{\nu}) w_{\nu} + h_{\nu} + \gamma w_{\nu}^2 \\
& = (- \gamma u^{- 1} \Delta_g u + \operatorname{Ric}_{g} (\nu,\nu)) + (|A|^2 -
\tfrac{1}{n - 1} H^2) \\
& \quad + \tfrac{1}{n - 1} h^2 + \tfrac{n - 3}{n - 1} \gamma hw_{\nu} +
\gamma (1 - \tfrac{n - 2}{n - 1} \gamma) w_{\nu}^2 + h_{\nu} \\
& = \left( \tfrac{4 - (n - 1) \gamma}{4 (n - 1 - (n - 2) \gamma)} (\eta
\circ \rho)^2 + \eta' \circ \rho + (- \gamma u^{- 1} \Delta_g u +
\operatorname{Ric}_{g}) \right) + (\operatorname{Ric}_{g} (\nu, \nu) - \operatorname{Ric}_{g}) \\
& \quad + (|A|^2 - \tfrac{1}{n - 1} H^2) \\
& \quad + (1 - \tfrac{n - 2}{n - 1} \gamma) \gamma (w_{\nu} + \tfrac{n -
3}{2 (n - 1 - (n - 2) \gamma)} h)^2 \\
& \quad + \eta' \circ \rho (\langle \nabla \rho, \nu \rangle - 1),
\label{eq rewrite ric1}
\end{align}
and $Z$, $W$ are related by
\[ Z = W + \tfrac{1}{n - 1} \tilde{H}^2 + \tilde{H} (\tfrac{2}{n - 1} (h -
   \gamma w_{\nu}) + \gamma w_{\nu}) . \]

\text{{\bfseries{II}}}. Case $- \gamma u^{- 1} \Delta_g u + \tfrac{1}{2} R_g$
(e.g., for Theorem \ref{thm bw spec scalar}): Using the Schoen-Yau's rewrite
{\cite{schoen-proof-1979}}
\begin{align}
& 2 (|A|^2 + \operatorname{Ric}_{g} (\nu,\nu)) = H^2 + |A|^2 + R_g - R_{\Sigma} \label{eq
sy rewrite} \\
= & (|A|^2 - \tfrac{1}{n - 1} H^2) + \tfrac{n}{n - 1} H^2 + R_g - R_{\Sigma}
\end{align}
of $\operatorname{Ric}_{g} (\nu,\nu)$ in \eqref{eq Z}. And with a similar argument as in the
previous case, we obtain that
\begin{align}
W := & - \tfrac{1}{2} R_{\Sigma} + \left( \tfrac{2 n - (n - 1) \gamma}{4 (n
- 1) - 2 (n - 2) \gamma} (\eta \circ \rho)^2 + \eta' \circ \rho + (- \gamma
u^{- 1} \Delta_g u + \tfrac{1}{2} R_g) \right) + \tfrac{1}{2} (|A|^2 -
\tfrac{1}{n - 1} H^2) \\
& \quad + (1 - \tfrac{n - 2}{2 (n - 1)} \gamma) (w_{\nu} - \tfrac{1}{(2 (n
- 1) - (n - 2) \gamma)} h)^2 \\
& \quad + \eta' \circ \rho (\langle \nabla \rho, \nu \rangle - 1) .
\end{align}
And $Z$ and $W$ are related by
\begin{equation}
  Z = W + \tfrac{n}{2 (n - 1)} \tilde{H}^2 + \tfrac{1}{n - 1} \tilde{H} (n h -
  \gamma w_{\nu}) . \label{eq Z W in sc}
\end{equation}

\text{{\bfseries{III}}}. Case $- \gamma u^{- 1} \Delta_g u + (n - 1)
\operatorname{Ric}_{g}$ (e.g., for Theorem \ref{thm bw ric2}): let $\{e_i \}_{1
\leqslant i \leqslant n - 1}$ be an orthonormal frame along $\Sigma$, and
using
\begin{equation}
  \operatorname{Ric}_{g} (\nu, \nu) + |A|^2 = \sum_{1\leqslant i \leqslant n - 1}
  \ensuremath{\operatorname{Ric}}_{g} (e_i, e_i) + H^2 - R_{\Sigma}, \label{eq nD
  zhu}
\end{equation}
(as easily seen by the definition of the scalar curvature, the above is equivalent to \eqref{eq sy rewrite}; see also
{\cite[(5.2)]{zhu2021}}) and following arguments in the previous cases, we
obtain
\begin{align}
W = & - R_{\Sigma} + ((1 - \tfrac{1}{4} \gamma) (\eta \circ \rho)^2 + \eta'
\circ \rho + (- \gamma u^{- 1} \Delta_g u + (n - 1) \operatorname{Ric}_{g})) \label{eq
W ric2} \\
& \quad + (\sum_{i \leqslant n - 1} \ensuremath{\operatorname{Ric}}_{g} (e_i) -
(n - 1) \ensuremath{\operatorname{Ric}}_{g}) + \gamma (w_{\nu} - \tfrac{1}{2}
h)^2 \\
& \quad + \eta' \circ \rho (\langle \nabla \rho, \nu \rangle - 1),
\end{align}
and $Z$, $W$ are related by
\begin{equation}
  Z = W + \tilde{H}^2 + \tilde{H} (2 h - \gamma w_{\nu}) . \label{eq Z W ric
  2}
\end{equation}
\begin{remark}
  Note that $Z = W$ when $H + \gamma w_{\nu} = h$ along $\Sigma$ in every case.
\end{remark}

\section{Classification of stable weighted minimal surfaces}\label{sec
classification}

In this section, we show some immediate applications of weighted minimal
surfaces, which are the simplest case of the warped $h$-bubbles by taking $h$
to be identically zero in dimension 3. In particular, we prove Theorem
\ref{thm weighted fcs}.

\subsection{Classification of stable surfaces}

\begin{proof}[Proof of Theorem \ref{thm weighted fcs}]
  For a compact, stable weighted minimal surface $\Sigma$, the second
  variation \eqref{eq second var} is non-negative, so the rewrite (Lemma
  \ref{lm rewrite prelim} and case \text{{\bfseries{II}}} in Subsection
  \ref{sec rewrite}) yields
  \begin{equation}
    0 \leqslant \tfrac{4}{4 - \gamma} \int_{\Sigma} | \nabla_{\Sigma} \psi |^2
    - \int_{\Sigma} (1 - \tfrac{\gamma}{4}) \gamma \left| \psi \nabla_{\Sigma}
    w - \tfrac{1}{2 (1 - \gamma / 4)} \nabla_{\Sigma} \psi \right|^2 +
    \tfrac{1}{2} \int_{\Sigma} R_{\Sigma} \psi^2 - \int_{\Sigma} (Z +
    \tfrac{1}{2} R_{\Sigma}) \psi^2 . \label{eq rewritten stability}
  \end{equation}
  We note that $Z + \tfrac{1}{2} R_{\Sigma} \geqslant 0$ by the assumptions,
  by taking $\psi \equiv 1$ and using the Gauss-Bonnet theorem, $2 \pi \chi
  (\Sigma) \geqslant 0$. Hence, $\Sigma$ can only be a sphere or a torus.
  
  In the case of a torus, then $\nabla_{\Sigma} w = 0$ and $Z + \tfrac{1}{2}
  R_{\Sigma} = 0$ (which forces $- \gamma u^{- 1} \Delta_g u + \tfrac{1}{2}
  R_g = 0$, $A = \tfrac{1}{n - 1} H$ and $w_{\nu} = 0$). Let
  $\tilde{L}_{\Sigma} = - \tfrac{4}{4 - \gamma} \Delta_{\Sigma} + \tfrac{1}{2}
  R_{\Sigma}$. Since taking $\psi = 1$ in \eqref{eq rewritten stability}
  implies that the right hand side of \eqref{eq rewritten stability} must
  vanish, hence the eigenvalue of $\tilde{L}_{\Sigma}$ is zero and the
  corresponding eigenfunction is 1, that is, $0 = \tilde{L}_{\Sigma} \psi =
  \tfrac{1}{2} R_{\Sigma}$. Hence $\Sigma$ is flat. We conclude that $\Sigma$
  satisfies the assumptions of Lemma \ref{lm foliation} with \( h=0 \).
  
  Now we show that $(M, g)$ locally splits if $\Sigma$ is weighted
  area-minimizing. Using Lemma \ref{lm foliation}, there exists a foliation
  $\{\Sigma_t \}_{t \in (- \varepsilon, \varepsilon)}$ of constant $H + \gamma
  u^{- 1} u_{\nu}$ near $\Sigma$ and $\Sigma_0 = \Sigma$. By \eqref{eq first
  variation H tilde},
  \[ \phi_t^{- 1} \tilde{H}' (t) = - \phi_t^{- 1} \Delta_{\Sigma_t} \phi_t -
     \gamma \phi_t^{- 1} \langle \nabla_{\Sigma_t} w, \nabla_{\Sigma_t} \phi_t
     \rangle - \gamma u^{- 1} \Delta_{\Sigma_t} u - Z. \]
  Setting $\xi_t$ to be $\phi_t = u^{- \gamma / 2} e^{\xi_t}$ and using
  \eqref{eq Z W in sc}, we see
\begin{align}
& \phi_t^{- 1} \tilde{H}' (t) \\
= & - | \nabla_{\Sigma_t} \xi_t |^2 - \Delta_{\Sigma_t} \xi_t +
(\tfrac{\gamma^2}{4} - \gamma) | \nabla_{\Sigma_t} w|^2 -
\tfrac{\gamma}{2} \Delta_{\Sigma_t} w + \tfrac{1}{2} R_{\Sigma_t} - (W +
\tfrac{1}{2} R_{\Sigma_t}) \\
& \quad - \tfrac{3}{4} \tilde{H}^2 - \tfrac{1}{2} \tilde{H} (3 h - \gamma
w_{\nu}) .
\end{align}
  By integration on both sides, the divergence theorem and the Gauss-Bonnet
  theorem,
\begin{align}
& \tilde{H}' (t) \int_{\Sigma_t} \phi_t^{- 1} \\
\leqslant & - \int_{\Sigma_t} (| \nabla_{\Sigma_t} \xi_t |^2 +
(\tfrac{\gamma^2}{4} - \gamma) | \nabla_{\Sigma_t} w|^2) + 2 \pi \chi
(\Sigma_t) - \int_{\Sigma_t} (W + \tfrac{1}{2} R_{\Sigma_t}) -
\tfrac{1}{2} \tilde{H} \int_{\Sigma_t} (3 h - \gamma w_{\nu}) \\
\leqslant & 2 \pi \chi (\Sigma_t) - \tfrac{1}{2} \tilde{H} \int_{\Sigma_t}
(3 h - \gamma w_{\nu}) \\
= & - \tfrac{1}{2} \tilde{H} \int_{\Sigma_t} (3 h - \gamma w_{\nu})
\end{align}
  where we have used $0 \leqslant \gamma < 3$ and that $W + \tfrac{1}{2}
  R_{\Sigma_t} \geqslant 0$. Hence,
  \[ \tilde{H}' (t) \leqslant - \tfrac{1}{2} \tilde{H} \frac{\int_{\Sigma_t}
     (3 h - \gamma w_{\nu})}{\int_{\Sigma_t} \phi_t^{- 1}}, \]
  which by solving we conclude that $\tilde{H} \leqslant 0$ for all $t \in [0,
  \varepsilon)$ and $\tilde{H} \geqslant 0$ for all $t \in (- \varepsilon,
  0]$. By \eqref{eq first var}, $\Sigma_t$ is also weighted area-minimizing.
  Hence, the argument works for $\Sigma$ also works for $\Sigma_t$.
  
  Since $H + \gamma u^{- 1} u_{\nu} = 0$ and $w_{\nu} = 0$, $\Sigma_t$ is
  minimal. From $A - \tfrac{1}{n - 1} H = 0$, we see $\Sigma_t$ is totally
  geodesic. So $(M, g)$ locally splits and hence $u$ is a positive constant.
  
  In the case that $\Sigma$ is non-compact, \eqref{eq rewritten stability}
  holds for all $\psi \in C_c^{\infty} (\Sigma)$ which implies that the
  operator
  \[ L = - \tfrac{4}{4 - \gamma} \Delta_{\Sigma} + \tfrac{1}{2} R_{\Sigma} -
     (Z + \tfrac{1}{2} R_{\Sigma}) \]
  in the condition that $0 \leqslant \gamma < 3$, is non-negative by \eqref{eq
  rewrite ric1}. By {\cite[Theorems 1.1-1.3]{berard-inverse-2014}}, $\Sigma$
  is conformally equivalent to either the complex plane $\mathbb{C}$ or the
  cylinder $\mathbb{A}$; in the case of a cylinder, $\Sigma$ is flat and $Z +
  \tfrac{1}{2} R_{\Sigma} = 0$ which implies $- \gamma u^{- 1} \Delta_g u +
  \tfrac{1}{2} R_g = 0$, $A = \tfrac{1}{n - 1} H$ and $w_{\nu} = 0$. By taking
  a simple cutoff $\psi$ approximating 1 in \eqref{eq rewritten stability}, we
  see $\nabla_{\Sigma} w = 0$.
\end{proof}

With almost the same proof, we can give a proof of Theorem \ref{thm fcs ric2}.

\begin{proof}[Proof of Theorem \ref{thm fcs ric2}]
  It suffices to replace the rewrites of $Z$ and $W$ by case
  \text{{\bfseries{III}}} of Subsection \ref{sec rewrite}. The proof is almost
  verbatim except deriving the consequences of $\nabla_{\Sigma} w = 0$,
  $R_{\Sigma} = 0$ and $Z + R_{\Sigma} = 0$. The condition $Z + R_{\Sigma} =
  0$ implies that
  $\ensuremath{\operatorname{Ric}}_{g}=\ensuremath{\operatorname{Ric}}_{g} (e_i,
  e_i)$, $- \gamma u^{- 1} \Delta_g u + 2\ensuremath{\operatorname{Ric}}_{g}= 0$
  and $w_{\nu} = 0$, where $e_i$ is any tangent vector of $\Sigma$. Using
  {\cite[Subsection 3.4]{chai-band-2025}}, $(M, g)$ is locally a doubly warped
  product, say $\mathrm{d} t^2 + \phi (t)^2 \mathrm{d} s_1^2 + \varphi (t)^2
  \mathrm{d} s_2^2$ which satisfies \eqref{eq F G} and \eqref{eq G F}. Denote
  $t$-level set by $\Sigma_t$. First,
  \[ H = \phi' / \phi + \varphi' / \varphi = - \gamma w_{\nu} = 0. \]
  By \eqref{eq G F}, $\phi' / \phi = \varphi' / \varphi$. Hence, $\phi' / \phi
  = \varphi' / \varphi = 0$ giving that both $\phi$ and $\varphi$ are
  constants.
\end{proof}

\section{Hypersurfaces of constant weighted mean curvature}\label{sec cmc}

In this section, we prove Theorems \ref{thm acg}, \ref{thm acg ric1} and
\ref{thm acg ric2} by choosing $h$ to be a constant in the definitions of
warped $h$-bubble.

Note that now we have the structure of a band, that is, two boundary
components. And, along the boundaries, the barrier condition is not strict. We
need to address the existence of a warped $h$-bubble first.

\subsection{Spectral scalar curvature case}

\begin{theorem}
  \label{thm non-exist strict barrier}Let $\gamma$, $\Lambda$, $\alpha$,
  $\beta$ and \( \eta \) be as in Theorem \ref{thm acg}. There does not exist a torical band
  $(M, g)$ such that $- \gamma u^{- 1} \Delta_g u + \tfrac{1}{2} R_g \geqslant
  \Lambda$, $H + \gamma u^{- 1} u_{\nu} > (n - 1) \alpha + \gamma \beta$ along
  $\partial_+ M$ and $H + \gamma u^{- 1} u_{\nu} < (n - 1) \alpha + \gamma
  \beta$.
\end{theorem}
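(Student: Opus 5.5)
The plan is to argue by contradiction, using a minimizing warped $h$-bubble with constant $h$. First I note that, with $\alpha,\beta,\eta$ as in Theorem \ref{thm acg},
\[(n-1)\alpha+\gamma\beta=\tfrac{(n-1)(2-\gamma)+\gamma}{2(n-1)-(n-2)\gamma}\,\eta=\eta.\]
So the hypotheses read $H+\gamma u^{-1}u_\nu>\eta$ on $\partial_+M$ and $H+\gamma u^{-1}u_\nu<\eta$ on $\partial_-M$. Take $h\equiv\eta$. These are exactly the strict barrier conditions of Lemma \ref{lem-warped-bubble}, so a minimizing warped $\eta$-bubble $\Omega$ exists with smooth boundary in the interior. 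Since $\partial\Omega\cap\mathrm{int}M$ is homologous to $\partial_-M=\{-1\}\times T^{n-1}$, some component $\Sigma$ carries a nonzero degree map to $T^{n-1}$ (projection). Hence $\Sigma$ admits no metric of positive scalar curvature, and any metric on it with nonnegative scalar curvature is flat. Here I am using Schoen--Yau/Gromov--Lawson in dimensions $\le 6$.

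Next I use stability. On $\Sigma$ we have $\tilde H=0$, so $Z=W$. The rewrite in Lemma \ref{lm rewrite prelim} and case \textbf{II} with $\eta'=0$ give, for all $\psi$,
\[0\le \tfrac{4}{4-\gamma}\int_\Sigma|\nabla\psi|^2+\tfrac12\int_\Sigma R_\Sigma\psi^2-\int_\Sigma (W+\tfrac12R_\Sigma)\psi^2 .\]
In $W+\tfrac12R_\Sigma$ the bracket satisfies $\Gamma\eta^2+(-\gamma u^{-1}\Delta u+\tfrac12R_g)\ge\Gamma\eta^2+\Lambda=0$ by the choice of $\eta$ (in both cases $\Gamma\Lambda<0$ and $\Gamma=\Lambda=0$). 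The remaining terms are squares, so $W+\tfrac12R_\Sigma\ge0$ and the operator $-\tfrac{8}{4-\gamma}\Delta_\Sigma+R_\Sigma$ is nonnegative. Note that $\gamma<\tfrac{2(n-1)}{n-2}\le4$.

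For $\dim\Sigma=n-1\ge3$, I would check that $\tfrac{8}{4-\gamma}\le\tfrac{4(n-2)}{n-3}$. This is equivalent to $\gamma\le\tfrac{2(n-1)}{n-2}$, which is precisely the assumed range. Given this, nonnegativity of the above operator implies that the conformal Laplacian $-\tfrac{4(n-2)}{n-3}\Delta+R_\Sigma$ has nonnegative first eigenvalue. Thus $\Sigma$ carries a conformal metric with $R\ge0$, which must be flat. For $n=3$ I would use Gauss--Bonnet with $\psi=1$ instead. Tracing back the equalities then forces $R_\Sigma$ to be zero (or conformally flat with vanishing eigenvalue), $\nabla_\Sigma w=0$, $A$ umbilic, and equality in the spectral bound along $\Sigma$. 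In particular $\delta\tilde H=-\Delta_\Sigma\phi$ up to the weight, so Lemma \ref{lm foliation} applies.

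The main obstacle is that this alone does not contradict anything; one must propagate rigidity out to the boundary. Using the foliation $\{\Sigma_t\}$ of constant $\tilde H(t)=H+\gamma w_\nu-\eta$, I would integrate the formula for $\phi_t^{-1}\tilde H'(t)$ exactly as in the proof of Theorem \ref{thm weighted fcs}. Here the weight is $\phi_t=u^{-\gamma/2}e^{\xi_t}$, and I would use \eqref{eq Z W in sc}, the sign $W+\tfrac12R_{\Sigma_t}\ge0$, and the conformal/Gauss--Bonnet bound replacing $2\pi\chi$ by a nonpositive quantity. This should yield a differential inequality $\tilde H'\le C(t)\tilde H$. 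Hence $\tilde H\le0$ for $t\ge0$ and $\tilde H\ge0$ for $t\le0$, and by \eqref{eq first var} the leaves $\Sigma_t$ are again minimizing $\eta$-bubbles. An open--closed argument then extends the family of minimizing rigid leaves until it reaches $\partial_+M$. There a leaf would touch $\partial_+M$ with $\tilde H\le0$ while $\partial_+M$ has $\tilde H>0$, contradicting the maximum principle (equivalently, Lemma \ref{lem-warped-bubble} keeps minimizers strictly away from strict barriers). This contradiction proves the theorem.
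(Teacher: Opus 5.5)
Most of your outline matches the paper's proof. The shared steps are: a minimizing warped $\eta$-bubble from the strict barriers, stability with rewrite \textbf{II}, and the comparison $\tfrac{8}{4-\gamma}<\tfrac{4(n-2)}{n-3}$ (equivalent to the paper's $\kappa<1$) giving $R_\Sigma=0$, $\nabla_\Sigma w=0$, $Z=0$. Then come Lemma \ref{lm foliation}, monotonicity of $\tilde H$, and sweeping minimizing leaves to a boundary component. Picking a component of nonzero degree is a useful precision the paper omits. You do need the \emph{strict} inequality between the two constants, which the strict range of $\gamma$ gives, to get $\nabla v=0$ and hence $R_\Sigma\equiv 0$ rather than only conformal flatness.

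The genuine gap is the monotonicity step for $n\geqslant 4$. Integrating the identity for $\phi_t^{-1}\tilde H'$ over $\Sigma_t$ ``exactly as in'' Theorem \ref{thm weighted fcs} tests it against the constant $1$. The only curvature term left is then $\tfrac12\int_{\Sigma_t}R_{\Sigma_t}$. For $n=3$ this equals $2\pi\chi(\Sigma_t)=0$, but for $m=\dim\Sigma_t\geqslant 3$ nothing makes it nonpositive. A non-flat metric $v^{4/(m-2)}g_0$ conformal to a flat torus has total scalar curvature $\tfrac{4(m-1)}{m-2}\int|\nabla v|^2_{g_0}>0$. Moreover, $\xi_t$ is fixed by the lapse of the foliation, so $-\int|\nabla\xi_t|^2$ cannot absorb it. Non-existence of PSC on $\Sigma_t$ only controls the quadratic form of $-\tfrac{8}{4-\gamma}\Delta_{\Sigma_t}+R_{\Sigma_t}$, so you must test with a leaf-dependent positive function, as the paper does (it argues by contradiction). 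Multiply by $\varphi_t^2$, where $\varphi_t>0$ is the first eigenfunction of that operator. Use $-\int(|\nabla\xi_t|^2+\Delta\xi_t)\varphi^2\leqslant\int|\nabla\varphi|^2$, and note that the $w$-terms combine with $|\nabla\varphi|^2$ into $\tfrac{4}{4-\gamma}|\nabla\varphi|^2$ minus a square. Dropping $W+\tfrac12R_{\Sigma_t}\geqslant0$ and the $\tilde H^2$ term, you get
\[\tilde H'\int_{\Sigma_t}\varphi_t^2\phi_t^{-1}\leqslant\tfrac12\lambda_1(\Sigma_t)\int_{\Sigma_t}\varphi_t^2-\tfrac{\tilde H}{n-1}\int_{\Sigma_t}\varphi_t^2(n h-\gamma w_\nu).\]
Here $\lambda_1(\Sigma_t)\leqslant 0$ by your own comparison argument applied to $\Sigma_t$, which is isotopic to $\Sigma$ and so carries no PSC metric. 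The resulting coefficient of $\tilde H$ is bounded uniformly in $t$, so the ODE comparison from $\tilde H(0)=0$ works without any regularity of $t\mapsto\varphi_t$. With this repair the rest of your argument stands.
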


\begin{proof}
We assume that there exists such a torical band $(M,
g)$. Since $\eta = (n - 1) \alpha + \beta \gamma$, using the strict barrier condition (see Lemma\ref{lem-warped-bubble}), we have a minimizing warped
$\eta$-bubble $\Omega$.  Let
$\Sigma$ be one of the connected components of $\partial \Omega \backslash
\partial_- M$.

Using the stability inequality with the rewrite \text{{\bfseries{II}}} of
Subsection \ref{sec rewrite}, we obtain
\begin{equation}
  0 \leqslant \int_{\Sigma} \left( \tfrac{4}{4 - \gamma} | \nabla_{\Sigma}
  \psi |^2 + \tfrac{1}{2} R_{\Sigma} \psi^2 \right) - \int_{\Sigma} (1 -
  \tfrac{\gamma}{4}) \gamma \psi^2  \left| \nabla_{\Sigma} w - \tfrac{1}{2 (1
  - \gamma / 4)} \tfrac{\nabla_{\Sigma} \psi}{\psi} \right|^2 - \int_{\Sigma}
  (Z + \tfrac{1}{2} R_{\Sigma}) \psi^2 . \label{eq acg stability}
\end{equation}
For $n = 3$, we use an argument similar to Theorem \ref{thm weighted fcs}. We
deal with $n \geqslant 4$. Let
\begin{equation}
  L = - \tfrac{4}{4 - \gamma} \Delta_{\Sigma} + \tfrac{1}{2} R_{\Sigma} - (Z +
  \tfrac{1}{2} R_{\Sigma}) . \label{eq acg L}
\end{equation}
Let $\lambda_1$ be the first eigenvalue of $L$ and $v$ be the corresponding
eigenfunction. Note that $\lambda_1 \geqslant 0$ by \eqref{eq acg stability}.

We define a constant $\kappa$ by $4 (n - 2) \kappa / (n - 3) = 8 / (4 -
\gamma)$. By the range of $\gamma$, $\kappa \in (0, 1)$. Let $\hat{g} =
(v^{\kappa})^{4 / (n - 3)} g|_{\Sigma}$ be the conformal metric. Then the
scalar curvature of $\Sigma$ with respect to $\hat{g}$ is
\begin{align}
(v^{\kappa})^{\tfrac{n + 1}{n - 3}} R_{\Sigma} (\hat{g}) & = v^{\kappa}
R_{\Sigma} - \tfrac{4 (n - 2)}{n - 3} \Delta_{\Sigma} v^{\kappa} \\
& = v^{\kappa} (R_{\Sigma} - \tfrac{4 (n - 2)}{n - 3} \alpha v^{- 1}
\Delta_{\Sigma} v - \tfrac{4 (n - 2)}{n - 3} \kappa (\kappa - 1) v^{- 2} |
\nabla_{\Sigma} v|^2) \\
& \geqslant v^{\kappa} (\tfrac{2 L v}{v} + (Z + \tfrac{1}{2} R_{\Sigma}) -
\tfrac{4 (n - 2)}{n - 3} \kappa (\kappa - 1) v^{- 2} | \nabla_{\Sigma} v|^2)
\\
& = v^{\kappa} \left( 2 \lambda_1 + (Z + \tfrac{1}{2} R_{\Sigma}) -
\tfrac{4 (n - 2)}{n - 3} \kappa (\kappa - 1) v^{- 2} | \nabla_{\Sigma} v|^2
\right) \\
& \geqslant 0.
\end{align}
Using the assumptions, it is direct to check that $Z + \tfrac{1}{2} R_{\Sigma}
\geqslant 0$ along $\Sigma$. So $R_{\Sigma} (\hat{g}) \geqslant 0$. By the
resolution of the Geroch conjecture and that $0 < \kappa < 1$, $R_{\Sigma}
(\hat{g})$ has to vanish which implies that $\lambda_1 = 0$, $\nabla_{\Sigma}
v = 0$, $Z + \tfrac{1}{2} R_{\Sigma} = 0$ (i.e., $A - \tfrac{1}{n - 1} H = 0$,
$- \gamma u^{- 1} \Delta_g u + \tfrac{1}{2} R_g = \Lambda$, $w_{\nu} =
\tfrac{1}{2 (n - 1) - (n - 2) \gamma} \eta$) and $R_{\Sigma} = 0$. Now with
$\psi = v = 1$ in \eqref{eq acg stability} whose right hand side vanishes, we
have $\nabla_{\Sigma} w = 0$. So we have shown that $\Sigma$ is infinitesimally
rigid. It follows from Theorem \ref{lm foliation} that we have a foliation
$\{\Sigma_t \}_{t \in (- \varepsilon, \varepsilon)}$ near $\Sigma$.

\text{{\bfseries{Claim}}}: $\tilde{H}_t \leqslant 0$ for $t \in (0,
\varepsilon)$ and $\tilde{H} \geqslant 0$ for $t \in (- \varepsilon, 0)$.

To show this claim, we repeat an argument in {\cite[Lemma
4.4]{chai-band-2025}}. By \eqref{eq first variation H tilde},
\[ \phi_t^{- 1} \tilde{H}' (t) = - \phi_t^{- 1} \Delta_{\Sigma_t} \phi_t -
   \gamma \phi_t^{- 1} \langle \nabla_{\Sigma_t} w, \nabla_{\Sigma_t} \phi_t
   \rangle - \gamma u^{- 1} \Delta_{\Sigma_t} u - Z. \]
Setting $\xi_t$ to be $\phi_t = u^{- \gamma / 2} e^{\xi_t}$ and using
\eqref{eq Z W in sc}, we see
\begin{align}
& \phi_t^{- 1} \tilde{H}' (t) \\
= & - | \nabla_{\Sigma_t} \xi_t |^2 - \Delta_{\Sigma_t} \xi_t +
(\tfrac{\gamma^2}{4} - \gamma) | \nabla_{\Sigma_t} w|^2 - \tfrac{\gamma}{2}
\Delta_{\Sigma_t} w + \tfrac{1}{2} R_{\Sigma_t} - (W + \tfrac{1}{2}
R_{\Sigma_t}) \\
& \quad - \tfrac{n}{2 (n - 1)} \tilde{H}^2 - \tfrac{1}{n - 1} \tilde{H} (n
h - \gamma w_{\nu}),
\end{align}
which leads to
\[ \tilde{H}' + q_t \tilde{H} \leqslant \phi_t (- | \nabla_{\Sigma_t} \xi_{t }
   |^2 - \Delta_{\Sigma_t} \xi_t + (\tfrac{\gamma^2}{4} - \gamma) |
   \nabla_{\Sigma_t} w|^2 - \tfrac{\gamma}{2} \Delta_{\Sigma_t} w +
   \tfrac{1}{2} R_{\Sigma_t}) . \]
Here, $q_t := - \tfrac{1}{n - 1} (n h - \gamma w_{\nu}) \phi_t$. For each $t$,
so for any positive function $\varphi \in C^2 (\Sigma_t)$,
\begin{equation}
  \tilde{H}' \int_{\Sigma_t} \varphi + \tilde{H} \int_{\Sigma_t} q_t \varphi
  \leqslant \int_{\Sigma_t} \varphi \phi_t (- | \nabla_{\Sigma_t} \xi_{t } |^2
  - \Delta_{\Sigma_t} \xi_t + (\tfrac{\gamma^2}{4} - \gamma) |
  \nabla_{\Sigma_t} w|^2 - \tfrac{\gamma}{2} \Delta_{\Sigma_t} w +
  \tfrac{1}{2} R_{\Sigma_t}) . \label{ode in acg}
\end{equation}
It suffices to show that there exists a positive function $\varphi$ such that
the right hand side is non-positive. Assume the contrary, and without loss of
generality, we can replace $\varphi \phi_t$ by $\varphi^2$. First,
\begin{align}
& (| \nabla_{\Sigma_t} \xi_t |^2 + \Delta_{\Sigma_t} \xi_t) \varphi^2
\\
= & | \nabla_{\Sigma_t} \xi_t |^2 \psi^2 - 2 \langle \nabla_{\Sigma_t}
\varphi, \varphi \nabla_{\Sigma_t} \xi_t \rangle
+\ensuremath{\operatorname{div}}_{\Sigma_t} (\varphi^2 \nabla_{\Sigma_t}
\xi_t) \\
\geqslant & - | \nabla_{\Sigma_t} \varphi |^2
+\ensuremath{\operatorname{div}}_{\Sigma_t} (\varphi^2 \nabla_{\Sigma_t}
\xi_t) .
\end{align}
It follows from integration by parts that
\[ - \int (| \nabla_{\Sigma_t} \xi_t |^2 + \Delta_{\Sigma_t} \xi_t) \varphi^2
   \leqslant \int_{\Sigma_t} | \nabla_{\Sigma_t} \varphi |^2 . \]
And also
\[ \tfrac{\gamma}{2} \int_{\Sigma_t} \varphi^2 \Delta_{\Sigma_t} w = -
   \int_{\Sigma_t} \gamma \varphi \langle \nabla_{\Sigma_t} w,
   \nabla_{\Sigma_t} \varphi \rangle . \]
So
\begin{align}
& \int_{\Sigma_t} (- | \nabla_{\Sigma_t} \xi_{t } |^2 - \Delta_{\Sigma_t}
\xi_t + (\tfrac{\gamma^2}{4} - \gamma) | \nabla_{\Sigma_t} w|^2 -
\tfrac{\gamma}{2} \Delta_{\Sigma_t} w + \tfrac{1}{2} R_{\Sigma_t}) \varphi^2
\\
= & \int_{\Sigma_t} (| \nabla_{\Sigma_t} \varphi |^2 + \gamma \varphi
\langle \nabla_{\Sigma_t} w, \nabla_{\Sigma_t} \varphi \rangle +
(\tfrac{\gamma^2}{4} - \gamma) | \nabla_{\Sigma_t} w|^2 \varphi^2 +
\tfrac{1}{2} R_{\Sigma_t} \varphi^2) \\
= & \int_{\Sigma_t} (\tfrac{4}{4 - \gamma} | \nabla_{\Sigma_t} \varphi |^2 -
(1 - \tfrac{\gamma}{4}) \gamma \left| \varphi \nabla_{\Sigma_t} w -
\tfrac{1}{2 (1 - \gamma / 4)} \nabla_{\Sigma_t} \varphi \right|^2 +
\tfrac{1}{2} R_{\Sigma_t} \varphi^2) \\
> & 0.
\end{align}
Let $L = - \tfrac{4}{4 - \gamma} \Delta_{\Sigma_t} + \tfrac{1}{2}
R_{\Sigma_t}$, $\lambda_1$ be the first eigenvalue of $L$ and $v > 0$ be the
first eigenfunction. By the above inequality, $L v = \lambda_1 v > 0$. Let
$\hat{g}_t = (v^{\kappa})^{4 / (n - 3)} g|_{\Sigma_t}$ be the conformal
metric. Then the scalar curvature of $\Sigma_t$ with respect to $\hat{g}$ is
\begin{align}
(v^{\kappa})^{\tfrac{n + 1}{n - 3}} R_{\Sigma_t} (\hat{g}_t) & = v^{\kappa}
R_{\Sigma_t} - \tfrac{4 (n - 2)}{n - 3} \Delta_{\Sigma_t} v^{\kappa}
\\
& = v^{\kappa} (R_{\Sigma_t} - \tfrac{4 (n - 2)}{n - 3} \kappa v^{- 1}
\Delta_{\Sigma_t} v - \tfrac{4 (n - 2)}{n - 3} \kappa (\kappa - 1) v^{- 2} |
\nabla_{\Sigma_t} v|^2) \\
& \geqslant v^{\kappa} (\tfrac{2 L v}{v} - \tfrac{4 (n - 2)}{n - 3} \kappa
(\kappa - 1) v^{- 2} | \nabla_{\Sigma_t} v|^2) \\
& = v^{\kappa} \left( 2 \lambda_1 - \tfrac{4 (n - 2)}{n - 3} \kappa (\kappa
- 1) v^{- 2} | \nabla_{\Sigma_t} v|^2 \right) \\
& > 0.
\end{align}
This is in contradiction with the resolution of the Geroch conjecture. Hence,
for each $t$, there exists some $\varphi$ such that the right hand side of
\eqref{ode in acg} is non-negative. Choosing such $\varphi$ for each $t$, and
solving \eqref{ode in acg}, we finish the proof of the claim.

By the first variation \eqref{eq first var}, every $\Sigma_t$ gives rise to a
minimizer to the warped functional. And the rigidity extends to all $M$, which
means that there exists a leaf that would meet $\partial_- M$ tangentially. However, the
strong maximum principle implies that $\partial_- M$ satisfies $H + \gamma
u^{- 1} u_{\nu} = \eta$. This is a contradiction to the assumption of the
existence of $(M, g)$.\end{proof}

Now we are ready to finish the proof of Theorem \ref{thm acg}.

\begin{proof}[Proof of Theorem \ref{thm acg}]
  First, for $\partial_- M$, we claim that either there exists a hypersurface
  $\Sigma_-$ near $\partial_- M$ such that $H_{\Sigma_-} + \gamma u^{- 1}
  \langle \nabla u, \nu_{\Sigma_-} \rangle < \eta$ or there exists a
  \text{{\itshape{maximal}}} foliation $\{\Sigma_t^- \}_{t \in [0, t_1]}$ such
  that $\Sigma_0^- = \partial_- M$ and every leaf is of vanishing $H + \gamma
  u^{- 1} u_{\nu} - \eta$. The maximality means that either the foliation
  foliates all of $M$ or if the foliation were to extend beyond
  $\Sigma_{t_1}^-$ (since $\Sigma_{t_1}^-$ is stable by Definition \ref{def
  stable warp bdry}) which will give a leaf $\Sigma_-$ with $H + \gamma u^{-
  1} u_{\nu} - \eta < 0$ in the extended foliation.
  
  Indeed, if $H + \gamma u^{- 1} u_{\nu} - \eta \lneqq 0$ along $\partial_-
  M$, we use Lemma \ref{lm flow to barrier}; If $H + \gamma u^{- 1} u_{\nu}
  - \eta = 0$ but not stable, then we use Lemma \ref{lm non stable to
  barrier}. In both cases, we obtain a hypersurface which satisfies the claim.
  
  If $\partial_- M$ is stable, then by Lemma \ref{lm foliation}, we obtain a
  foliation $\{\Sigma_t^- \}_{t \in [0, \varepsilon]}$. By the proof of
  Theorem \ref{thm non-exist strict barrier}, $H + \gamma u^{- 1} u_{\nu} -
  \eta \leqslant 0$ for every $\{\Sigma_t^- \}_{t \in [0, \varepsilon]}$.
  Either there exists some $t \in (0, \varepsilon]$ such that $H + \gamma u^{-
  1} u_{\nu} - \eta < 0$ or $H + \gamma u^{- 1} u_{\nu} - \eta = 0$ for all
  $\Sigma_t^-$, $t \in [0, \varepsilon]$. (It is worth noting that
  $\varepsilon = 0$ is also possible.) In the latter case, all leaves are
  stable by Definition \ref{def stable warp bdry}. Hence, the foliation can be
  extended beyond $\Sigma_{\varepsilon}^-$. In light of this, we can assume
  that starting from $\partial_- M$, there is a maximal foliation
  $\{\Sigma_t^- \}_{t \in [0, t_1]}$. If the union of the leaves are the
  closure of $M$, then we are done. If not, by maximality, we have a
  hypersurface $\Sigma_-$ with $H + \gamma u^{- 1} u_{\nu} - \eta < 0$
  constructed as a leaf of the foliation started from $\Sigma_{t_1}^-$ by
  Lemma \ref{lm foliation}. Hence, the claim is proved.
  
  We can argue similarly for $\partial_+ M$ to show that there exists a
  hypersurface $\Sigma_+$ near $\partial_- M$ such that $H_{\Sigma_+} + \gamma
  u^{- 1} \langle \nabla u, \nu_{\Sigma_+} \rangle > \eta$ or there exists a
  maximal foliation $\{\Sigma_t^+ \}_{t \in [0, t_2]}$ such that $\Sigma_0^+ =
  \partial_+ M$ and every leaf is of vanishing $H + \gamma u^{- 1} u_{\nu} -
  \eta$. (For this, we have to reverse the signs of the mean curvature, $\nu$
  and $\eta$.)
  
  Note that the two leaves from $\{\Sigma_t^- \}$ and $\{\Sigma_t^+ \}$ can
  only touch which by the strong maximum principle are the same leaf. It then
  implies that the two foliations are the same one and foliate all of $M$. If
  this happens, we can finish the rigidity of $M$. If not, by maximality, we
  can extend the foliations and obtain the hypersurfaces $\Sigma_-$ and
  $\Sigma_+$ which satisfies the barrier condition strictly.
  
  To summarize, either the rigidity holds for $M$ or there exists two
  hypersurfaces $\Sigma_-$ and $\Sigma_+$ which satisfies the barrier
  condition strictly. However, the latter case is ruled out by Theorem
  \ref{thm non-exist strict barrier}. Now we find the metric $g$ and $u$ based
  on the foliation. The foliation gives (as in Theorem \ref{thm non-exist
  strict barrier})
  \[ \nabla_{\Sigma_t} w = 0, \text{ } - \gamma u^{- 1} \Delta_g u +
     \tfrac{1}{2} R_g = \Lambda, \text{ } A - \tfrac{1}{n - 1} H = 0, \text{ }
     w_{\nu} = \tfrac{1}{2 (n - 1) - (n - 2) \gamma} \eta \]
  and that each $\Sigma_t$ is a flat torus.
  
  The equation $A - \tfrac{1}{n - 1} H = 0$ gives that $(M, g)$ is isometric
  to some warped product $\mathrm{d} t^2 + \phi (t)^2 g_{\mathbb{T}^{n - 1}}$,
  and we can assume that the foliation is given by the level set of the
  coordinate $t$. Note that $\nabla_{\Sigma_t} w = 0$ gives that $u$ is
  constant along each $\Sigma_t$, so $w_{\nu} = \tfrac{1}{2 (n - 1) - (n - 2)
  \gamma} \eta$ leads to ${u = e^{\eta t / (2 (n - 1) - (n - 2) \gamma)}} $
  (up to a constant). Now the equation
  \[ H + \gamma u^{- 1} u_{\nu} = \eta = (n - 1) \phi' / \phi + \gamma u^{- 1}
     u_{\nu} \]
  reduces to an ODE for the warping factor $\phi$, and (up to a constant)
  $\phi = e^{\alpha t}$.
\end{proof}

The proof of Theorem \ref{thm acg ric2} differs only by the calculation of
rigid metrics.

\begin{proof}[Proof of Theorem \ref{thm acg ric2}]
  It suffices to follow Theorem \ref{thm acg} and to replace the rewrites of
  $Z$ and $W$ by case \text{{\bfseries{III}}} of Subsection \ref{sec rewrite}.
  For dimensions $n \geqslant 4$, we need to establish a version of Theorem
  \ref{thm non-exist strict barrier}. We proceed the proof to the place where
  the range $0 \leqslant \gamma < 3 + \tfrac{1}{n - 2}$ is needed and omit the
  rest. By the existence result, we have a stable hypersurface $\Sigma$ which
  is a torus and with vanishing $H + \gamma u^{- 1} u_{\nu} - \eta$. The
  stability gives
  \begin{equation}
    0 \leqslant \int_{\Sigma} \left( \tfrac{4}{4 - \gamma} | \nabla_{\Sigma}
    \psi |^2 + R_{\Sigma} \psi^2 \right) - \int_{\Sigma} (1 -
    \tfrac{\gamma}{4}) \gamma \psi^2  \left| \nabla_{\Sigma} w - \tfrac{1}{2
    (1 - \gamma / 4)} \tfrac{\nabla_{\Sigma} \psi}{\psi} \right|^2 -
    \int_{\Sigma} (Z + R_{\Sigma}) \psi^2 . \label{eq acg stability ric2}
  \end{equation}
  Let
  \begin{equation}
    L = - \tfrac{4}{4 - \gamma} \Delta_{\Sigma} + R_{\Sigma} - (Z +
    R_{\Sigma}) . \label{eq acg L ric2}
  \end{equation}
  Let $\lambda_1$ be the first eigenvalue of $L$ and $v$ be the corresponding
  eigenfunction. Note that $\lambda_1 \geqslant 0$ by \eqref{eq acg stability
  ric2}.
  
  We define a constant $\kappa$ by $4 (n - 2) \kappa / (n - 3) = 4 / (4 -
  \gamma)$. By the range of $\gamma$, $\kappa \in (0, 1)$. Let $\hat{g} =
  (v^{\kappa})^{4 / (n - 3)} g|_{\Sigma}$ be the conformal metric. Then the
  scalar curvature of $\Sigma$ with respect to $\hat{g}$ is
\begin{align}
(v^{\kappa})^{\tfrac{n + 1}{n - 3}} R_{\Sigma} (\hat{g}) & = v^{\kappa}
R_{\Sigma} - \tfrac{4 (n - 2)}{n - 3} \Delta_{\Sigma} v^{\kappa}
\\
& = v^{\kappa} (R_{\Sigma} - \tfrac{4 (n - 2)}{n - 3} \kappa v^{- 1}
\Delta_{\Sigma} v - \tfrac{4 (n - 2)}{n - 3} \kappa (\kappa - 1) v^{- 2} |
\nabla_{\Sigma} v|^2) \\
& \geqslant v^{\kappa} (\tfrac{L v}{v} + (Z + R_{\Sigma}) - \tfrac{4 (n -
2)}{n - 3} \kappa (\kappa - 1) v^{- 2} | \nabla_{\Sigma} v|^2) \\
& = v^{\kappa} \left( \lambda_1 + (Z + R_{\Sigma}) - \tfrac{4 (n - 2)}{n
- 3} \kappa (\kappa - 1) v^{- 2} | \nabla_{\Sigma} v|^2 \right)
\\
& \geqslant 0.
\end{align}
  In order to obtain $\nabla_{\Sigma} v = 0$, $\lambda_1 = 0$ and $Z +
  R_{\Sigma} = 0$, we need $\kappa < 1$ which is equivalent to $\gamma < 3 +
  \tfrac{1}{n - 2}$. We omit the rest of the proof which is almost the same
  with Theorem \ref{thm acg} except that we obtain $H + \gamma u^{- 1} u_{\nu}
  = \eta$, $\nabla_{\Sigma} w = 0$, $R_{\Sigma} = 0$ and $Z + R_{\Sigma} = 0$
  for the foliation. The condition $Z + R_{\Sigma} = 0$ implies that
  $\ensuremath{\operatorname{Ric}}_{g}=\ensuremath{\operatorname{Ric}}_{g} (e_i,
  e_i)$, $- \gamma u^{- 1} \Delta_g u + 2\ensuremath{\operatorname{Ric}}_{g}=
  \Lambda$ and $w_{\nu} = \tfrac{1}{2} \eta$, where $e_i$ is any tangent
  vector of $\Sigma$.
  
  In dimension 3, using {\cite[Subsection 3.4]{chai-band-2025}}, $(M, g)$ is
  locally a doubly warped product, say $\mathrm{d} t^2 + \phi (t)^2 \mathrm{d}
  s_1^2 + \varphi (t)^2 \mathrm{d} s_2^2$. Denote $t$-level set by $\Sigma_t$.
  First, $w_{\nu} = \tfrac{1}{2} \eta$ gives $u = e^{\eta t / 2}$, and
  \begin{equation}
    H = \phi' / \phi + \varphi' / \varphi = \eta - \gamma w_{\nu} = (1 -
    \tfrac{\gamma}{2}) \eta . \label{eq H acg ric2}
  \end{equation}
  By the Ricci curvatures of a doubly warped product metric in Appendix
  \ref{app doubly}, $\ensuremath{\operatorname{Ric}} (\partial_t, \partial_t)
  \geqslant \ensuremath{\operatorname{Ric}} (e_i, e_i)$ gives
  \[ 0 \geqslant (\phi' / \phi + \varphi' / \varphi')' + (\phi' / \phi -
     \varphi' / \varphi)^2 = (\phi' / \phi - \varphi' / \varphi)^2 \]
  see \eqref{eq G F}. So $\phi' / \phi = \varphi' / \varphi = \tfrac{1}{2} (1
  - \tfrac{\gamma}{2}) \eta$ by \eqref{eq H acg ric2}. Then up to a factor we
  can choose $\phi = \varphi = e^{(1 - \gamma / 2) \eta / 2}$.
\end{proof}

\begin{remark}
  In dimensions $n \geqslant 4$, we cannot find a metric structure like a
  doubly warped product.
\end{remark}

\subsection{Spectral Ricci curvature case}

Before we prove Theorem \ref{thm acg ric1}, we need an analogous Theorem
\ref{thm non-exist strict barrier}.

\begin{theorem}
  \label{thm non-exist strict barrier acg ric}Let $\gamma$, $\Lambda$,
  $\alpha$ and $\beta$ be as in Theorem \ref{thm acg ric1}. There does not
  exist a band $(M, g)$ such that $- \gamma u^{- 1} \Delta_g u
  +\ensuremath{\operatorname{Ric}}_g \geqslant \Lambda$, $H + \gamma u^{- 1}
  u_{\nu} > (n - 1) \alpha + \gamma \beta$ along $\partial_+ M$ and $H +
  \gamma u^{- 1} u_{\nu} < (n - 1) \alpha + \gamma \beta$.
\end{theorem}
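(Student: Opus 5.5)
Suppose such a band exists and argue by contradiction, following the proof of Theorem \ref{thm non-exist strict barrier} but with rewrite \textbf{I} of Subsection \ref{sec rewrite} in place of rewrite \textbf{II}. First I would check that $\eta=(n-1)\alpha+\gamma\beta$. This holds because $(n-1)(2-\gamma)-(n-3)\gamma=2(n-1-(n-2)\gamma)$. The boundary conditions are then strict barriers for $h\equiv\eta$, so Lemma \ref{lem-warped-bubble} (in the conformal metric $u^{2\gamma/(n-1)}g$) gives a minimizing warped $\eta$-bubble $\Omega$. Let $\Sigma$ be a component of $\partial\Omega\setminus\partial_-M$; it is closed, and we may take it connected. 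Since $h$ is constant, $h_\nu=0$ and $\eta'\equiv 0$. Rewrite \textbf{I} together with $\Gamma\eta^2+\Lambda=0$ and the hypothesis $-\gamma u^{-1}\Delta_gu+\mathrm{Ric}_g\ge\Lambda$ gives $W=Z\ge0$ along $\Sigma$. Equality forces $A=\frac{1}{n-1}Hg_\Sigma$, $\mathrm{Ric}_g(\nu,\nu)=\mathrm{Ric}_g$, $-\gamma u^{-1}\Delta u+\mathrm{Ric}_g=\Lambda$ and $w_\nu=\beta$. Here $\beta=-\frac{n-3}{2(n-1-(n-2)\gamma)}\eta$, which matches the square in \eqref{eq rewrite ric1}, and $1-\frac{n-2}{n-1}\gamma>0$ since $\gamma<\frac{n-1}{n-2}$.

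Next I would extract infinitesimal rigidity from the stability inequality of Lemma \ref{lm rewrite prelim}:
\[0\le \tfrac{4}{4-\gamma}\int_\Sigma|\nabla_\Sigma\psi|^2-\int_\Sigma(1-\tfrac{\gamma}{4})\gamma\bigl|\psi\nabla_\Sigma w-\tfrac{1}{2(1-\gamma/4)}\nabla_\Sigma\psi\bigr|^2-\int_\Sigma Z\psi^2 .\]
Unlike the scalar case, no $R_\Sigma$ term appears, so no Geroch-type input or dimension restriction is needed. Taking $\psi\equiv1$ gives $\int_\Sigma\bigl((1-\frac{\gamma}{4})\gamma|\nabla_\Sigma w|^2+Z\bigr)\le0$. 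Since $\gamma<4$, this yields $Z\equiv0$ and $\nabla_\Sigma w\equiv0$. Then the equality cases listed above hold on $\Sigma$, and the linearization in \eqref{eq first variation H tilde} reduces to $-\Delta_\Sigma\phi$. Lemma \ref{lm foliation} therefore gives a foliation $\{\Sigma_t\}$ near $\Sigma$ with $\tilde H(t)=H+\gamma w_\nu-\eta$ constant on each leaf.

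The main step, as in the Claim in the proof of Theorem \ref{thm non-exist strict barrier}, is to show $\tilde H(t)\le0$ for $t>0$ and $\tilde H(t)\ge0$ for $t<0$. Write $\phi_t=u^{-\gamma/2}e^{\xi_t}$ and use $Z=W+\frac{1}{n-1}\tilde H^2+\tilde H(\cdots)$ from rewrite \textbf{I}. This gives
\[\tilde H'+q_t\tilde H\le\phi_t\bigl(-|\nabla\xi_t|^2-\Delta\xi_t+(\tfrac{\gamma^2}{4}-\gamma)|\nabla w|^2-\tfrac{\gamma}{2}\Delta w\bigr),\]
using $W\ge0$ on every leaf, which holds pointwise by the curvature hypothesis. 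I would integrate against $\varphi=\phi_t^{-1}$, i.e. the weight with $\varphi\phi_t=1$. The terms $\Delta\xi_t$ and $\Delta w$ then integrate to zero. What remains is $-\int(|\nabla\xi_t|^2+(\gamma-\frac{\gamma^2}{4})|\nabla w|^2)\le0$, using $0\le\gamma<4$. This is simpler than the scalar case, where a Geroch/conformal argument was needed to control $R_{\Sigma_t}$. The resulting linear ODE inequality for $\tilde H$ yields the claim.

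By the first variation \eqref{eq first var}, each $\Sigma_t$ is then again a minimizer, and the same rigidity holds on it. Continuing this open–closed argument, the leaves sweep out the region until some leaf touches $\partial_-M$ (or $\partial_+M$). The strong maximum principle then gives $H+\gamma u^{-1}u_\nu=\eta$ at the contact, contradicting the strict inequalities on the boundary. The main obstacle I expect is justifying the continuation, namely that minimizers persist along the whole foliation until contact. I would handle it as in Theorem \ref{thm non-exist strict barrier}, using compactness of minimizing bubbles and the fact that the barriers are strict.
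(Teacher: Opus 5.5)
Your proposal is correct and follows the paper's proof essentially step for step. The steps are: the minimizing warped $\eta$-bubble, stability with rewrite \textbf{I} and $\psi\equiv 1$ giving $Z\equiv 0$ and $\nabla_\Sigma w\equiv 0$, the foliation from Lemma \ref{lm foliation}, the ODE inequality for $\tilde H$ obtained by integrating against $\phi_t^{-1}$, and the continuation/maximum-principle contradiction taken from Theorem \ref{thm non-exist strict barrier}. One small caveat is that the curvature argument needs no Geroch-type input, but existence and regularity of the minimizer via Lemma \ref{lem-warped-bubble} still require $3\leqslant n\leqslant 7$, which the paper also assumes implicitly.
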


\begin{proof}
Let $\Sigma = \partial \Omega \cap \mathrm{int} M$,
then $\Sigma$ is a stable warped $h$-hypersurface. Then using $- \gamma u^{- 1}
\Delta_g u +\ensuremath{\operatorname{Ric}}_g \geqslant \Lambda$ and the
condition on $\eta$, the stability \eqref{eq second var} gives
\[ 0 \leqslant \tfrac{4}{4 - \gamma} \int_{\Sigma} | \nabla_{\Sigma} \psi |^2
   - \int_{\Sigma} (1 - \tfrac{\gamma}{4}) \left| \psi \nabla_{\Sigma} w -
   \tfrac{1}{2 (1 - \gamma / 4)} \nabla_{\Sigma} \psi \right|^2 -
   \int_{\Sigma} Z \psi^2, \]
where $Z$ is defined in \eqref{eq Z}. When $\Sigma$ is an $h$-hypersurface, $Z = W
\geqslant 0$. Hence taking $\psi = 1$, we find that $Z = W = 0$ and
$\nabla_{\Sigma} w = 0$, and hence by \eqref{eq first variation H tilde},
$\Sigma$ is infinitesimally rigid. By Lemma \ref{lm foliation}, we can
construct a foliation $\{\Sigma_t \}_{t \in (- \varepsilon, \varepsilon)}$ for
some $\varepsilon > 0$.

\text{{\bfseries{Claim}}}: for each $t \in (- \varepsilon, \varepsilon)$,
$\tilde{H} (t) := H + \gamma u^{- 1} u_{\nu} - h$ on $\{\Sigma_t \}$ satisfies
\[ \tilde{H} (t) \leqslant 0 \text{ for } t \in (0, \varepsilon) \text{ and }
   \tilde{H} (t) \geqslant 0 \text{ for } t \in (- \varepsilon, 0) . \]
Let $Y$ be the variational vector field of the foliation $\{\Sigma_t \}_{t \in
(- \varepsilon, \varepsilon)}$, let $\phi_t = \langle Y, \nu_t \rangle$. Using
the first variation \eqref{eq first variation H tilde} of $\tilde{H} (t)$ , we
see that
\begin{equation}
  \phi_t^{- 1}  \tilde{H}' = - \phi_t^{- 1} \Delta_{\Sigma_t} \phi_t - u^{- 1}
  \Delta_{\Sigma_t} u - \gamma \phi^{- 1}_t \langle \nabla_{\Sigma_t} w,
  \nabla_{\Sigma_t} \phi_t \rangle - Z_t \label{eq variation with Pt}
\end{equation}
where $w = \log u$ and $Z_t$ is defined as in \eqref{eq Z} for $\Sigma_t$. Let
$W$ be defined as \eqref{eq W ric1} for $\Sigma_t$. By the rewrite of $Z_t$
when $\tilde{H}$ vanishes, $Z_t = W_t$ where $W_t$ is given in \eqref{eq W
ric1}. When $\tilde{H}$ might not vanish, $Z_t$ and $W_t$ are related by
\[ Z_t = W_t + \tfrac{1}{n - 1} \tilde{H}^2 + \tilde{H} q_t \]
where $q_t := \tfrac{2 (h - \gamma w_{\nu_t})}{n - 1} + \gamma w_{\nu_t}$. We
set $\phi_t = u^{- \gamma / 2} e^{\xi_t}$ and using the above relation, and
after a tedious calculation, we get
\begin{align}
\phi_t^{- 1}  \tilde{H}' = & - | \nabla_{\Sigma_t} \xi_t |^2 -
\Delta_{\Sigma_t} \xi_t + (\tfrac{\gamma^2}{4} - \gamma) | \nabla_{\Sigma_t}
w|^2 - \tfrac{\gamma}{2} \Delta_{\Sigma_t} w - W_t - \tfrac{1}{n - 1}
\tilde{H}^2 - \tilde{H} q_t . \\
= : & L_t - W_t - \tfrac{1}{n - 1} \tilde{H}^2 - \tilde{H} q_t \\
\leqslant & L_t - \tilde{H} q_t,
\end{align}
where we have used $W_t \geqslant 0$ using the assumptions. We integrate
$\phi_t^{- 1}  \tilde{H}' \leqslant L_t - \tilde{H} q_t$ over $\Sigma_t$, and
we obtain
\begin{equation}
  \tilde{H}'  \int_{\Sigma_t} \phi_t^{- 1} + \tilde{H}  \int_{\Sigma_t} q_t
  \leqslant \int_{\Sigma_t} L_t \label{H tilde ODE}
\end{equation}
By the range $0 \leqslant \gamma < \tfrac{4}{n - 1}$ and the divergence
theorem,
\[ \int_{\Sigma_t} L_t = - \int_{\Sigma_t} (| \nabla_{\Sigma_t} \xi_t |^2 +
   (\gamma - \tfrac{\gamma^2}{4}) | \nabla_{\Sigma_t} w|^2) \leqslant 0. \]
By noting that $\tilde{H} (0) = 0$, and solving the inequality \eqref{H tilde
ODE}, we finish the proof.\end{proof}

Now we are ready to prove Theorem \ref{thm acg ric1}.

\begin{proof}[Proof of Theorem \ref{thm acg ric1}]
  Using Theorem \ref{thm non-exist strict barrier acg ric} and similar
  arguments as in Theorem \ref{thm acg}, we can show that $M$ is foliated by
  $\{\Sigma_t \}_{t \in [t_-, t_+]}$ hypersurfaces of vanishing $H + \gamma
  u^{- 1} u_{\nu} - \eta = 0$ for some $t_- < t_+$. It remains to calculate
  the rigid metric. To this end, we observe from Theorem \ref{thm non-exist
  strict barrier acg ric}, every leaf must satisfy the identities
\begin{align}
\nabla_{\Sigma_t} w & = 0, \\
|A|^2 - \tfrac{1}{n - 1} H^2 & = 0, \\
- \gamma u^{- 1} \Delta_g u + \operatorname{Ric} & = \Lambda,  \label{eq ode
ric1}\\
\operatorname{Ric} & = \operatorname{Ric} (\nu,\nu), \\
\langle w, \nu_{\Sigma_t} \rangle + \tfrac{n - 3}{2 (n - 1 - (n - 2)
\gamma)} \eta & = 0.  \label{eq ode w ric1}
\end{align}
  (Other than $\nabla_{\Sigma_t} w = 0$, the rest are implied by $Z = 0$ along
  $\Sigma_t$.) The condition $|A|^2 - \tfrac{1}{n - 1} H^2 = 0$ implies that
  $\Sigma_t$ is umbilic, hence a warped product $g = \mathrm{d} t^2 + \phi
  (t)^2 g_S$ for some closed manifold ($S, g_S$). We can assume that the $t$
  parametrizes the foliation as well. By $\nabla_{\Sigma_t} w=0$ and $w_{\nu} +
  \tfrac{n - 3}{2 (n - 1 - (n - 2) \gamma)} \eta = 0$, $u$ (since $w = \log
  u$) only depends on $t$. The equation \eqref{eq ode w ric1} is then an ODE
  for $u$ which we can solve, we obtain that $u = e^{\beta t}$. Now we solve
  \[ H + \gamma u^{- 1} u_{\nu} = \eta = (n - 1) \phi' / \phi + \gamma u^{- 1}
     u_{\nu} \]
  to get that $\phi = e^{\alpha t}$. The extra condition
  $\ensuremath{\operatorname{Ric}}_{g_S} \geqslant 0$ is the requirement that
  $\operatorname{Ric} (\partial_t, \partial_t$) is the least Ricci curvature
  required by $Z = W = 0$, see \eqref{eq W ric1} and Appendix \ref{warped
  product curvature}.
\end{proof}

\section{Band width estimates with spectral curvature bounds}\label{sec bw}

In this section, by selecting the suitable $h$ to be the composition of a
decreasing function $\eta$ and a distance function $\rho$, we prove band width
estimates (Theorems \ref{thm bw spec scalar}, \ref{bm bw} and \ref{thm bw
ric2}).

\subsection{Bonnet-Myers type band width estimate}

\begin{proof}[Proof of Theorem \ref{bm bw}]
  We show directly that if $\operatorname{width} (M, g) \geqslant t_+ - t_-$, then
  the width must be $t_+ - t_-$ and rigidity would follow. In particular, it
  would imply that $\ensuremath{\operatorname{width}} (M, g) \leqslant t_+ -
  t_-$.
  
  We set
  \[ \rho (x) = \min \{ \mathrm{dist}_g (x, \partial_- M) + t_+ - t_-, t_+ \},
  \]
  since the width is greater than $t_+ - t_-$, $\rho (x) = t_+$ for all $x \in
  \partial_+ M$. Also, $| \nabla \rho | \leqslant 1$. We set $h = \eta \circ
  \rho$. Then $H_{\partial_+ M} + \gamma u^{- 1} u_{\nu_+} \geqslant h$ on
  $\partial_+ M$, $H_{\partial_- M} + \gamma u^{- 1} u_{\nu_-} \leqslant h$ on
  $\partial_- M$. Also, we can easily check that $W \geqslant 0$ along every
  hypersurface $M$ from the estimate
  \[ h_{\nu} \geqslant \eta' \circ \rho \langle \nabla \rho, \nu \rangle
     \geqslant \eta' \circ \rho . \]
  With these conditions, we can prove as Theorem \ref{thm acg ric1} that $M$
  is foliated by hypersurfaces $\{\Sigma_t \}$ such that $H + \gamma u^{- 1}
  u_{\nu} - \eta \circ \rho = 0$ along $\Sigma_t$, we see that
  \[ \nabla_{\Sigma_t} w = 0, \text{ and } Z_t = 0 \text{ along } \Sigma_t .
  \]
  more specifically, we have the following identities,
\begin{align}
\nabla_{\Sigma_t} w & = 0, \\
|A|^2 - \tfrac{1}{n - 1} H^2 & = 0, \\
\langle \nabla \rho, \nu_{\Sigma_t} \rangle & = 1, \\
- \gamma u^{- 1} \Delta_g u + \operatorname{Ric} & = \Lambda, \label{eq ode ric1
eta} \\
\operatorname{Ric} & = \operatorname{Ric} (\nu,\nu), \\
\langle\nabla w, \nu_{\Sigma_t} \rangle + \tfrac{n - 3}{2 (n - 1 - (n - 2)
\gamma)} \eta \circ \rho & = 0 \label{eq ode w ric1 eta}
\end{align}
  along $\Sigma_t$ since all of the summands of $Z_{\Sigma_t}$ are
  non-negative. The condition $|A|^2 - \tfrac{1}{n - 1} H^2 = 0$ implies that
  $\Sigma_t$ is umbilic, hence a warped product $g = \mathrm{d} t^2 + \phi
  (t)^2 g_S$ for some closed manifold ($S, g_S$). With $\langle \nabla \rho,
  \nu_t \rangle = 1$, the level sets of $\rho$ agrees with the $t$-level set,
  and $\rho$ and $t$ differs by only a constant. By $\nabla_{\Sigma_t} w=0$ and
  $w_{\nu} + \tfrac{n - 3}{2 (n - 1 - (n - 2) \gamma)} \eta \circ \rho = 0$,
  $u$ (since $w = \log u$) only depends on $t$. The equation \eqref{eq ode w
  ric1 eta} is then an ODE for $u$ which we can solve, we obtain the
  expression of $u$. With this, \eqref{eq ode ric1 eta} is an ODE for
  $\phi$, which we can solve and it gives the model \eqref{eq ric1 model}.
  The extra condition is the requirement that $\operatorname{Ric} (\partial_t,
  \partial_t$) is the least Ricci curvature in all directions forced by $Z = W
  = 0$, see \eqref{eq W ric1} and Appendix \ref{warped product curvature}.
\end{proof}

\subsection{Band width estimate with spectral scalar curvature}

Now we briefly provide a proof of Theorem \ref{thm bw spec scalar} regarding
the band width estimates under spectral scalar curvature bounds. Most of the
proof was already in {\cite[Theorem 1.4]{chai-band-2025}}.

\begin{proof}[Proof of Theorem \ref{thm bw spec scalar}]
  We assume that $\ensuremath{\operatorname{width}} (M, g) \geqslant t_+ -
  t_-$, and set
  \[ \rho (x) = \min \{ \mathrm{dist}_g (x, \partial_- M) + t_+ - t_-, t_+ \},
  \]
  since the width is greater than $t_+ - t_-$, $\rho (x) = t_+$ for all $x \in
  \partial_+ M$. Also, $| \nabla \rho | \leqslant 1$. We set $h = \eta \circ
  \phi$. As in the proof of Theorem \ref{bm bw}, $M$ is foliated by
  hypersurfaces with vanishing $\tilde{H}$. We can use the rigidity analysis
  of {\cite[Theorem 1.4]{chai-band-2025}} to conclude the proof.
\end{proof}

\begin{remark}
  \label{rk omit proof ric2}Similarly, we can reuse the proof of Theorem
  {\cite[Theorem 1.2]{chai-band-2025}} to give the proof of Theorem \ref{thm
  bw ric2}, so we omit the proof of Theorem \ref{thm bw ric2}.
\end{remark}

\section{Spectral splitting and non-compact settings}\label{sec noncompact}

In this section, we show some applications of the band width estimates for the
splitting theorems of the spectral curvatures.

\subsection{Proof of splitting result under spectral scalar curvature bound}
In this subsection, we prove Theorem \ref{thm global splitting scalar}. We only need to prove $u$ is constant on $M$. We need some lemmas that can be found in \cite{zhu-rigidity-2020}.
\begin{lemma}[{\cite[Lemma 2.1]{zhu-rigidity-2020}}]
  \label{Lem-function-phi}There is a proper and surjective smooth function
  $\phi : M \to \mathbb{R}$ such that $\phi^{- 1} (0) = \Sigma$, 
  $\mathrm{Lip} \phi \leqslant 1$, and $\Sigma \subset M$ is an orientable closed
  hypersurface associated with a signed distance function.
\end{lemma}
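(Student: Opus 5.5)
This is the standard construction of a smoothed signed distance function, and I would follow \cite[Lemma 2.1]{zhu-rigidity-2020}. On $M=T^{n-1}\times\mathbb{R}$, fix the closed orientable hypersurface $\Sigma=T^{n-1}\times\{0\}$. It is two-sided and separates $M$ into two unbounded components $M_+$ and $M_-$. Define the signed distance
\[
d(x)=\pm\,\mathrm{dist}_g(x,\Sigma)\quad\text{for } x\in M_\pm .
\]
This $d$ is $1$-Lipschitz: for two points on the same side this is the triangle inequality, and for points on opposite sides any curve joining them meets $\Sigma$.

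Because $g$ is complete and $\Sigma$ is compact, closed metric balls are compact. Hence $|d|\to\infty$ exactly along sequences leaving every compact set, so $d$ is proper. Moreover $d$ is unbounded above on $M_+$ and below on $M_-$, since each side is noncompact. By connectedness of each closure $\overline{M_\pm}$ and continuity, $d$ is surjective onto $\mathbb{R}$.

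Next I would smooth $d$ away from $\Sigma$. Take a smooth function $\tilde d$ with $|\tilde d-d|<\varepsilon$ and $\mathrm{Lip}\,\tilde d\le 1+\varepsilon$, by standard approximation of Lipschitz functions on manifolds (Greene--Wu). Near $\Sigma$, use the genuinely smooth signed distance on a tubular neighborhood $\{|d|<2\delta\}$, where normal exponential coordinates give $d=s$ with $|\nabla d|=1$. Glue the two with a cutoff supported in $\{|d|<2\delta\}$ and equal to $1$ on $\{|d|<\delta\}$; choosing $\varepsilon\ll\delta$ keeps the error from the cutoff gradient small. Finally rescale by $(1+C\varepsilon)^{-1}$ to get $\mathrm{Lip}\le 1$.

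The resulting $\phi$ is smooth and equals the signed distance near $\Sigma$, so $\phi^{-1}(0)\cap\{|d|<\delta\}=\Sigma$. Away from the tube, $|\phi|\ge(\delta-\varepsilon)/(1+C\varepsilon)>0$, so $\phi^{-1}(0)=\Sigma$ exactly. Properness and surjectivity survive, because $|\phi-d/(1+C\varepsilon)|$ is bounded.

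The main technical point is this gluing step: the cutoff gradient contributes roughly $\varepsilon/\delta$ to the Lipschitz constant, which forces $\varepsilon$ to be chosen after $\delta$. Rescaling then recovers $\mathrm{Lip}\,\phi\le1$ exactly. Alternatively, one can mollify only the piece on $\{|d|\ge\delta\}$, where exact Lipschitz control is not needed, as in \cite{zhu-rigidity-2020}.
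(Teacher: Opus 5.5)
Your argument is correct and follows the same standard construction behind \cite[Lemma 2.1]{zhu-rigidity-2020}; the paper only cites that lemma and gives no proof of its own. The construction is: take the signed distance to $\Sigma=T^{n-1}\times\{0\}$ (completeness gives properness), keep it on a tubular neighbourhood where it is smooth, smooth it elsewhere with small $C^0$ error and gradient at most $1+\varepsilon$, glue with a cutoff, and rescale. One cosmetic slip: after rescaling, $\phi$ equals $d/(1+C\varepsilon)$ near $\Sigma$ rather than $d$, which does not affect $\phi^{-1}(0)=\Sigma$.
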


For the function $\phi$, let
\[ \Omega_0 =\{x \in M: \text{ } \phi (x) < 0\}. \]
Given any smooth function $h : (- T, T) \to \mathbb{R}$, we introduce the
following functional
\[ \mathcal{B}^h (\Omega) = \int_{\partial^{\ast} \Omega} u^{\gamma}
   d\mathcal{H}^{n - 1} - \int_M (\chi_{\Omega} - \chi_{\Omega_0}) u^{\gamma}
   h \circ \phi \hspace{0.17em} d\mathcal{H}^n, \]
on
\[ \mathcal{C}_T =\{ \text{Caccioppoli set } \Omega \subset M :\text{ }\Omega \Delta
   \Omega_0 \Subset \phi^{- 1} ((- T, T))\}. \]
For the minimizing problem of the functional $\mathcal{B}^h$ on
$\mathcal{C}_T$, we have the following existence result.

\begin{lemma}[{\cite[Lemma 2.2]{zhu-rigidity-2020}}]
  Assume that $\pm T$ are regular values of $\phi$ and also the function $h$
  satisfies
  \begin{equation}
    \lim_{t \to - T} h (t) = + \infty \quad \text{ and } \quad \lim_{t \to T} h
    (t) = - \infty,
  \end{equation}
  then there exists a smooth minimizer $\hat{\Omega}$ in $\mathcal{C}_T$ for
  $\mathcal{B}^h$.
\end{lemma}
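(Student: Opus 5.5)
The plan is to reduce the statement to the unweighted $\mu$-bubble existence theorem by the same conformal change used in the proof of Lemma \ref{lem-warped-bubble}, and then to run the barrier argument of \cite[Lemma 2.2]{zhu-rigidity-2020} (equivalently \cite[Proposition 12]{chodosh-generalized-2024}).

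\textbf{Step 1: conformal reduction.} Set $\tilde g = u^{2\gamma/(n-1)} g$. Then $u^{\gamma}\,d\mathcal{H}^{n-1}_g$ is the area element of $\tilde g$ and $u^{\frac{n}{n-1}\gamma}\,d\mathcal{H}^n_g$ is its volume element. Hence
\[
\mathcal{B}^h(\Omega)=\mathcal{H}^{n-1}_{\tilde g}(\partial^*\Omega)-\int_M(\chi_\Omega-\chi_{\Omega_0})\,\tilde h\, d\mathrm{vol}_{\tilde g},
\qquad \tilde h := u^{-\gamma/(n-1)}\, h\circ\phi .
\]
Since $\phi$ is proper, the region $\overline{\phi^{-1}((-T,T))}$ is compact. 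On this compact region $u$ is bounded above and below by positive constants, so $\tilde h \to +\infty$ as $\phi\to -T$ and $\tilde h\to-\infty$ as $\phi\to T$. The functional $\mathcal{B}^h$ is therefore an ordinary $\mu$-bubble functional on the compact band $\phi^{-1}([-T,T])$ with metric $\tilde g$ and prescribing function $\tilde h$.

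\textbf{Step 2: barriers near $\phi=\pm T$.} Because $\pm T$ are regular values, $|\nabla\phi|$ is bounded below near $\phi^{-1}(\pm T)$. Consequently the level sets $\phi^{-1}(\pm T\mp s)$, for small $s$, form smooth foliations, and their $\tilde g$-mean curvatures are uniformly bounded. Choose $\varepsilon$ so small that $\tilde h$ exceeds this bound on $\{\phi<-T+\varepsilon\}$, and that $-\tilde h$ exceeds it on $\{\phi>T-\varepsilon\}$.

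For any $\Omega\in\mathcal C_T$, consider the replacement $\Omega' = (\Omega\cup\{\phi<-T+\varepsilon\})\setminus\{\phi>T-\varepsilon\}$. Apply the divergence theorem to the unit normal field $X=\tilde\nabla\phi/|\tilde\nabla\phi|$ on the region $\{\phi<-T+\varepsilon\}\setminus\Omega$. Using $\operatorname{div}X = $ (mean curvature of the leaves) $<\tilde h$, this yields $\mathcal B^h(\Omega')\le\mathcal B^h(\Omega)$. The same computation applies on the other side.

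Hence a minimizing sequence can be assumed to agree with $\Omega_0$ near $\phi=\pm T$. In particular the infimum is finite: the volume term is now integrated only against the bounded function $\tilde h$ on the middle region $\{|\phi|\le T-\varepsilon\}$.

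\textbf{Step 3: existence and regularity.} By BV compactness on the compact region and lower semicontinuity of perimeter, the minimizing sequence has a subsequence converging to a minimizer $\hat\Omega\in\mathcal C_T$. The volume term converges because $\tilde h$ is bounded where the sets differ.

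Since $\hat\Omega$ is fixed near $\phi=\pm T$, its boundary stays in the interior. Locally $\hat\Omega$ is an almost-minimizer of perimeter, with bounded $\tilde h$. Standard regularity for $3\le n\le 7$ then gives that $\partial\hat\Omega$ is smooth. Being smooth in $\tilde g$, it is also a smooth minimizer of $\mathcal B^h$ for $g$.

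\textbf{Main obstacle.} I expect the main obstacle to be the comparison in Step 2. It needs to be checked carefully that the mean-curvature bound for the level sets of $\phi$ holds uniformly, which depends on $\phi$ being smooth with $\pm T$ regular and on the conformal factor being controlled on a compact set. It must also be checked that the divergence-theorem inequality still holds when $\partial^*\Omega$ is only rectifiable. For the latter I would follow \cite[Proposition 2.1]{zhu2021}, using the calibration by $X$ together with the reduced-boundary form of the divergence theorem.
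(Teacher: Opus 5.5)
Your proposal is correct and follows essentially the paper's route: the paper gives no separate proof here and only cites \cite[Lemma 2.2]{zhu-rigidity-2020}, while for the companion Lemma \ref{lem-warped-bubble} it uses exactly your conformal change $\tilde g=u^{2\gamma/(n-1)}g$ to turn the weighted functional into an unweighted $\mu$-bubble functional with prescribing function $u^{-\gamma/(n-1)}h$, and then invokes the existence theory of \cite{zhu2021,chodosh-generalized-2024}. You write out the barrier--compactness--regularity argument that the paper delegates to those references; this has the small advantage of applying directly to $\tilde h=u^{-\gamma/(n-1)}h\circ\phi$, which is no longer of the form $h\circ\phi$ that Zhu's lemma literally covers.
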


\begin{lemma}[cf. {\cite[Lemma 2.3]{zhu-rigidity-2020}}]
  \label{Lem-function-h}For any $\epsilon \in (0, 1)$, there is a function
  \[ h_{\epsilon} : \left( - \frac{1}{n \epsilon}, \frac{1}{n \epsilon}
     \right) \to \mathbb{R} \]
  such that
\begin{enumerate}[(a)]
    \item $h_{\epsilon}$ satisfies
    \[ \frac{2n-\gamma n+\gamma}{2(n - 1)+\gamma(2-n)} h^2_{\epsilon} +  2h_{\epsilon}' = \frac{2(n-1)+\gamma(2-n)}{2n+\gamma-\gamma n}n^2 \epsilon^2
     \]
     on \( \left( - \frac{1}{n \epsilon}, - \frac{1}{2 n}
       \right] \cup \left[ \frac{1}{2 n}, \frac{1}{n \epsilon} \right)\)
    and there is a universal constant $C$ so that
    \[ \sup_{- \frac{1}{2 n} \leqslant t \leqslant\frac{1}{2 n}} \left| \frac{2n-\gamma n+\gamma}{2(n - 1)+\gamma(2-n)} h^2_{\epsilon} +  2h_{\epsilon}' \right| \leqslant C \epsilon . \]
    \item $h'_{\epsilon} < 0$ and
    \[ \lim_{t \to \mp \frac{1}{n \epsilon}} h_{\epsilon} (t) = \pm \infty .
    \]
    \item As $\epsilon \to 0$, $h_{\epsilon}$ converge smoothly to $0$ on any
    closed interval.
    
    \item $h_{\epsilon} < 0$ on $\left[ \frac{1}{2 n}, \frac{1}{n \epsilon}
    \right)$.
  \end{enumerate}
\end{lemma}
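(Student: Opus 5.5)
The plan is to construct $h_\epsilon$ explicitly, following \cite[Lemma 2.3]{zhu-rigidity-2020}, with the coefficient $1$ in front of $h^2$ replaced by
\[
a:=\tfrac{2n-\gamma n+\gamma}{2(n-1)+\gamma(2-n)} .
\]
First I will check that $a>0$. For $0\leqslant\gamma<\tfrac{2n}{n-1}$ (the range in Theorem \ref{thm global splitting scalar}) both $2n-(n-1)\gamma$ and $2(n-1)-(n-2)\gamma$ are positive, the latter because $\tfrac{2n}{n-1}<\tfrac{2(n-1)}{n-2}$. The ODE in (a) then reads
\[
a h^2+2h'=\tfrac{n^2\epsilon^2}{a}.
\]
I look for solutions of the form $k\coth(m s)$. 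With $y=k\coth(ms)$ one gets
\[
a y^2+2y'=ak^2+(ak^2-2km)\,\mathrm{csch}^2(ms).
\]
Choosing $k=n\epsilon/a$ and $m=n\epsilon/2$ makes the $\mathrm{csch}^2$ term vanish and the constant equal to $n^2\epsilon^2/a$, as required.

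Next I define $h_\epsilon$ on the two outer intervals. On $\bigl(-\tfrac{1}{n\epsilon},-\tfrac{1}{2n}\bigr]$ set
\[
h_\epsilon(t)=\tfrac{n\epsilon}{a}\coth\bigl(\tfrac{n\epsilon}{2}(t+\tfrac{1}{n\epsilon})\bigr),
\]
and on $\bigl[\tfrac{1}{2n},\tfrac{1}{n\epsilon}\bigr)$ set
\[
h_\epsilon(t)=-\tfrac{n\epsilon}{a}\coth\bigl(\tfrac{n\epsilon}{2}(\tfrac{1}{n\epsilon}-t)\bigr).
\]
The reflection $t\mapsto -t$ together with $h\mapsto -h$ preserves the ODE, so both pieces solve it. Both are strictly decreasing, tend to $\pm\infty$ at $\mp\tfrac{1}{n\epsilon}$, and the second is negative, which gives (d) and the outer parts of (a) and (b).

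At the junction points the argument of $\coth$ equals $\tfrac12-\tfrac{\epsilon}{4}\in(\tfrac14,\tfrac12)$. Hence there $h_\epsilon=\pm\tfrac{n\epsilon}{a}\coth(\cdot)$ is $O(\epsilon)$, and all derivatives are $O(\epsilon)$ as well, with constants uniform in $\epsilon\in(0,1)$. On $[-\tfrac{1}{2n},\tfrac{1}{2n}]$ I take a smooth strictly decreasing interpolation that matches the outer pieces to infinite order at $\pm\tfrac{1}{2n}$. Concretely, I use $\epsilon$ times a fixed decreasing profile built with cutoff functions, so that $|h_\epsilon|+|h_\epsilon'|\leqslant C\epsilon$ on the middle interval. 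Then $|a h_\epsilon^2+2h_\epsilon'|\leqslant C\epsilon$ there, which is the estimate in (a), and $h_\epsilon'<0$ holds everywhere, completing (b).

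For (c), fix a closed interval $J$. For $\epsilon$ small, $J$ lies inside $\bigl(-\tfrac{1}{n\epsilon},\tfrac{1}{n\epsilon}\bigr)$ at a distance of order $1/\epsilon$ from the endpoints. On $J$ the arguments $\tfrac{n\epsilon}{2}\bigl(t+\tfrac{1}{n\epsilon}\bigr)=\tfrac12+\tfrac{n\epsilon t}{2}$ stay in a compact subset of $(0,\infty)$. Each $t$-derivative produces a factor $n\epsilon/2$, so every $C^k$ norm on $J$ is $O(\epsilon)$. On the middle interval the same holds by construction.

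The main obstacle is making the middle interpolation both strictly decreasing and $C^\infty$-matched to the coth pieces with $O(\epsilon)$ bounds uniform in $\epsilon$. The boundary values $\pm\tfrac{n\epsilon}{a}\coth(\tfrac12-\tfrac{\epsilon}{4})$ and their derivatives depend smoothly on $\epsilon$, and the left value exceeds the right value. So I will blend the two outer functions, extended slightly into the middle, using a monotone partition of unity. I then add a small term $-\epsilon\delta t$ supported in the middle to guarantee $h_\epsilon'<0$.
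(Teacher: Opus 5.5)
Your construction is correct and is essentially the one the paper relies on: the paper gives no proof of its own and simply cites Zhu's Lemma 2.3 with the coefficient of $h_\epsilon^2$ changed, and that construction uses the explicit solutions $\pm\tfrac{n\epsilon}{a}\coth\bigl(\tfrac{n\epsilon}{2}(\tfrac{1}{n\epsilon}\pm t)\bigr)$ on the outer intervals, glued by a smooth interpolation on $[-\tfrac{1}{2n},\tfrac{1}{2n}]$ with $O(\epsilon)$ bounds. One fix is needed in the write-up: drop the extra term $-\epsilon\delta t$, because the blend $h=(1-\chi)h_L+\chi h_R$ of the left and right $\coth$ pieces, with $0\leqslant\chi\leqslant 1$ and $\chi'\geqslant 0$, already satisfies $h'=(1-\chi)h_L'+\chi h_R'+\chi'(h_R-h_L)<0$, while a nonzero correction supported in the middle with nonpositive derivative cannot exist, and adding one would break the matching at $\tfrac{1}{2n}$ (likewise drop the ``$\epsilon$ times a fixed profile'' description, which cannot match the $\epsilon$-dependent $\coth$ pieces to infinite order).
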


Let
\[ \mathcal{B}_{\epsilon} (\Omega) = \int_{\partial^{\ast} \Omega} u^{\gamma}
   d\mathcal{H}^{n - 1} - \int_M (\chi_{\Omega} - \chi_{\Omega_0}) u^{\gamma}
   h_{\epsilon} \circ \phi \hspace{0.17em} d\mathcal{H}^n, \]
on
\[ \mathcal{C}_{\epsilon} =\{ \text{Caccioppoli set } \Omega \subset M :
   \Omega \Delta \Omega_0 \Subset \phi^{- 1} \left( \left( - \frac{1}{n
   \epsilon}, \frac{1}{n \epsilon} \right) \right) \}. \]
\begin{proposition}
  For almost every $\epsilon \in (0, 1)$, there is a smooth minimizer
  $\hat{\Omega}_{\epsilon}$ in $\mathcal{C}_{\epsilon}$ for the functional
  $\mathcal{B}^{\epsilon}$.
\end{proposition}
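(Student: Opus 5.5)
The plan is to reduce the proposition to the existence result \cite[Lemma 2.2]{zhu-rigidity-2020} quoted above, applied with $T = \frac{1}{n\epsilon}$ and $h = h_\epsilon$. There are two hypotheses to check. The first is the blow-up of $h$ at the ends of the interval, and this is exactly item (b) of Lemma \ref{Lem-function-h}: $h_\epsilon \to +\infty$ as $t \to -\frac{1}{n\epsilon}$ and $h_\epsilon \to -\infty$ as $t \to \frac{1}{n\epsilon}$. The second is that $\pm \frac{1}{n\epsilon}$ are regular values of $\phi$. By Sard's theorem applied to the smooth function $\phi$ from Lemma \ref{Lem-function-phi}, the set of critical values of $\phi$ has measure zero. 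The map $\epsilon \mapsto \pm\frac{1}{n\epsilon}$ is a diffeomorphism of $(0,1)$ onto its image, so the set of $\epsilon$ for which $\frac{1}{n\epsilon}$ or $-\frac{1}{n\epsilon}$ is a critical value also has measure zero. This is what produces the ``almost every $\epsilon$'' in the statement.

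Next I will check that the weight $u^\gamma$ does not spoil the lemma, since \cite{zhu-rigidity-2020} treats the unweighted functional. I handle this with the conformal change already used in the proof of Lemma \ref{lem-warped-bubble}. Set $\tilde g = u^{2\gamma/(n-1)} g$. Then $u^\gamma\, d\mathcal{H}^{n-1}$ is the area element of $\tilde g$ and $u^{\frac{n}{n-1}\gamma}\, d\mathcal{H}^n$ is its volume element. So $\mathcal{B}_\epsilon$ becomes the Zhu-type functional for $\tilde g$ with prescribing function $u^{-\gamma/(n-1)}\, h_\epsilon\circ\phi$.

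This rewriting raises two points to verify. First, the new prescribing function is no longer a function of $\phi$ alone. The existence argument only uses that it tends to $\pm\infty$ on the two boundary components, and it does, because $u$ is positive and bounded above and below on the compact region $\phi^{-1}([-T,T])$. Properness of $\phi$ gives this compactness. Second, $\phi$ is still a proper Lipschitz function for $\tilde g$, with a different Lipschitz constant, but the Lipschitz constant plays no role in existence.

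The existence argument itself runs as follows. Because $h_\epsilon$ blows up, the $\pm T$ level sets act as strict barriers, since near them the prescribed mean curvature exceeds any bound. A minimizing sequence in $\mathcal{C}_\epsilon$ can therefore be modified to stay compactly inside $\phi^{-1}((-T,T))$. BV compactness then gives a minimizer, and regularity theory for almost-minimizing boundaries in dimensions $3\le n\le 7$ makes it smooth.

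I expect the main obstacle to be the barrier step. One has to show that the minimizer does not touch $\phi^{-1}(\pm T)$, and for this I will cite \cite[Proposition 12]{chodosh-generalized-2024} and \cite[Proposition 2.1]{zhu2021}, as the proof of Lemma \ref{lem-warped-bubble} does. The other routine point is that $\mathcal{B}_\epsilon$ is finite and bounded below on $\mathcal{C}_\epsilon$, which holds because $\chi_\Omega - \chi_{\Omega_0}$ has compact support where $h_\epsilon$ is finite.
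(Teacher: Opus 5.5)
Your proposal is correct and is essentially the argument the paper intends. The paper states this proposition without a separate proof; as in Zhu's original, it follows from Sard's theorem, which makes $\pm\frac{1}{n\epsilon}$ regular values of $\phi$ for a.e.\ $\epsilon$, together with the quoted (already weighted) Lemma 2.2 and the blow-up in Lemma~\ref{Lem-function-h}(b). Your conformal change $\tilde g = u^{2\gamma/(n-1)} g$ adds a justification the paper omits, namely that Zhu's unweighted existence result transfers to the $u^{\gamma}$-weighted functional, done exactly as in the proof of Lemma~\ref{lem-warped-bubble}.
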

\begin{proof}[Proof of Theorem \ref{thm global splitting scalar}]
Similar to \cite[Section 4.2]{chai-band-2025}, we can obtain a closed stable weighted minimal surface $\Sigma$ by letting $\epsilon\to 0$. By Lemma \ref{lm rewrite prelim} for $h=0$,
\begin{align}
& \int_{\Sigma} (\tfrac{4}{4 - \gamma} | \nabla_{\Sigma} \psi |^2 +
\tfrac{1}{2} R_{\Sigma} \psi^2) \\\geqslant&\int_{\Sigma} (1 - \tfrac{\gamma}{4}) \gamma \left| \psi \nabla_{\Sigma} w
- \tfrac{1}{2 (1 - \gamma / 4)} \nabla_{\Sigma} \psi \right|^2\\
&+\int_{\Sigma}
(\tfrac{n}{2 (n - 1)} \gamma^2 - \gamma^2 + \gamma) w_{\nu}^2 \psi^2 + (-
\gamma \tfrac{\Delta u}{u} + \tfrac{1}{2} R_g) \psi^2 .
\end{align}
Since $\gamma <  \tfrac{2n}{n - 1}$, we have
\[  \tfrac{4 (n - 2)}{n -
   3} > \tfrac{8}{4 - \gamma} \text{ and } \tfrac{n}{2 (n - 1)}
   \gamma^2 - \gamma^2 + \gamma > 0. \]
If $u$ is not a constant on $\Sigma$ or $w_{\nu}\neq 0$ or $-
\gamma \tfrac{\Delta u}{u} + \tfrac{1}{2} R_g>0$, then, for $4\leqslant n\leqslant 7$,  the operator 
\[-\frac{4(n-2)}{n-3}\Delta_{\Sigma}+R_{\Sigma}\]
is positive, which is a contradiction, since $\Sigma$ is homologous to $T^{n-1}$; for $n=3$, the operator
\[-\frac{4}{4-\gamma}\Delta_{\Sigma}+\frac{1}{2}R_{\Sigma}\]
is positive, which is also a contradiction, since $\Sigma$ is homologous to $T^{2}$. Therefore, we have $u$ is a constant on $\Sigma$ and $w_{\nu}= 0$ and $-
\gamma \tfrac{\Delta u}{u} + \tfrac{1}{2} R_g=0$. Then, by a foliation argument (see \cite[Section 4.3]{chai-band-2025}), we can show that $u$ is a constant on the whole $M$. And then, $R_{g}\geq0$. The remaining proof is the same as \cite{zhu-rigidity-2020}.
\end{proof}

\subsection{Geroch conjecture with arbitrary ends under spectral
scalar curvature condition}
\begin{proof}[Proof of Theorem \ref{thm spectral geroch arbitrary end}]
We follow the strategy of {\cite[Theorem 3]{chodosh-generalized-2024}}. Because of the
positivity of the spectral scalar curvature, there is some room so that we can
select an $h$. The level sets of $h$ where it takes the values of $\pm \infty$
serve as barriers for the existence of warped $h$-bubbles. The only
differences are the construction of $h$ and the use of warped $\mu$-bubbles. We first introduce some notations as was done in {\cite{chodosh-generalized-2024}}.

Fix $\varepsilon > 0$ small, we define
\[ \Xi := \{x = (x_1, \ldots, x_n) \in \mathbb{R}^n : |x - k| > \varepsilon, k
   \in \mathbb{Z}^n \}/ \sim \]
where ($x_1, \ldots, x_n) \sim (x_1 + k_1, \ldots, x_n + k_n$) for $k = (k_1,
\ldots, k_n) \in \mathbb{Z}^n$. By assumption, there is a map $\Psi : \Xi \to
M$ so that $\Psi$ is a diffeomorphism onto its image. By scaling, we can
assume that $\Lambda > 1$ on $\Psi (\Xi$).

Observe that $M$ is (topologically) covered by $\hat{M} = (T^{n-1} \times \mathbb{R}) \#_{\mathbb{Z}} X$. (Unwind one of the $S^1$ factors in $T^n$.) Define
\[
\hat{\Xi} = \left\{ \vec{x} = (x_1, \dots, x_n) \in \mathbb{R}^n : |\vec{x} - \vec{k}| > \varepsilon, \vec{k} \in \mathbb{Z}^n \right\} \big/ \sim
\]
where $(x_1, \dots, x_n) \sim (x_1 + k_1, \dots, x_{n-1} + k_{n-1}, x_n)$ and note that the map $\Psi$ lifts to $\hat{\Psi} : \hat{\Xi} \to \hat{X}$, a diffeomorphism onto its image $\hat{M}_0$. It is useful to write
\[
\hat{M} = \hat{M}_0 \cup \left( \bigcup_{k \in \mathbb{Z}} \mathring{X}_k \right)
\]
where each $\mathring{X}_k$ is (topologically $X \setminus B$ for an $n$-ball $B$ in $X$) attached to $\hat{M}_0 := \hat{\Psi}(\hat{\Xi})$ along small spheres centered at $(0, 0, k)$.

Define $\gamma_1 = \sqrt{\frac{- n \gamma + \gamma + 2 n}{4 (n - 1) + 2
\gamma (2 - n)}}$.

The $\rho_0$ and $\rho_1$ are defined in the same way as in
{\cite{chodosh-generalized-2024}}, we assume that $\mathrm{Lip} (\rho_1) < L$
and $L$ is taken so we can assume that $\tfrac{\pi}{2} \gamma_1^{- 1} L = J +
\tfrac{3}{4}$. On $\hat{M}_0 \cap \left\{ | \rho_1 | < \tfrac{\pi L}{2
\gamma_1} \right\}$, define
\[ h (p) = - \tfrac{1}{\gamma_1} \tan (\tfrac{\gamma_1}{L} \rho_1 (p)) . \]
For $0 \leqslant k \leqslant J$ and
\[ p \in X^0_k \cap \{\rho_1 < k + \tfrac{1}{2} + \frac{L}{\gamma_1 \tan
   (\tfrac{\gamma_1}{L} (k + \tfrac{1}{2}))} \}, \]
or $- J \leqslant k < 0$ and
\[ p \in X^0_k \cap \{\rho_1 > k + \tfrac{1}{2} + \frac{L}{\gamma_1 \tan
   (\tfrac{\gamma_1}{L} (k + \tfrac{1}{2}))} \}, \]
we set
\[ h (p) = \frac{L}{\gamma_1^2  \left( \rho_1 - (k + \tfrac{1}{2}) -
   \tfrac{L}{\gamma_1 \tan (\tfrac{\gamma_1}{L} (k + \tfrac{1}{2}))} \right)}
   . \]
We can easily check as in {\cite[Lemma 22]{chodosh-generalized-2024}} that
\begin{equation}
  \gamma_1^2 h^2 - | \nabla h| + \Lambda \geqslant \gamma_1^2 h^2 - | \nabla
  h| + 1 > 0 \label{eq min stay compact}
\end{equation}
on $\{|h| < \infty\}$. We can smooth $h$ slightly, so that the above is still
satisfied. We still denote by $h$.

We fix
\[ \Omega_0 := \left( \hat{\Psi} (\hat{\Xi} \cap \{x_n < - \tfrac{1}{2} \}
   \cup (\cup_{k < 0} X_k^0)) \right) \cap \{|h| < \infty\}. \]
We minimize
\[ E (\Omega) = \int_{\Omega} u^{\gamma} + \int_M (\chi_{\Omega} -
   \chi_{\Omega_0}) hu^{\gamma} \]
for all Caccioppoli sets $\Omega$ in $M$ with $\Omega \Delta \Omega_0$ relatively compact in $M$.
Denote by $\Omega$ the connected component of the minimizer containing
$\{\rho_1 = - J\}$. By the stability {\cite[(4.5)]{chai-band-2025}} and
\eqref{eq min stay compact}
\[ \tfrac{4}{4 - \gamma} \int_{\partial \Omega} | \nabla_{\partial \Omega}
   \psi |^2 + \tfrac{1}{2} \int_{\partial \Omega} R_{\partial \Omega} \psi^2 >
   \int_{\partial \Omega} [\gamma_1^2 h^2 - | \nabla h| + \Lambda] \psi^2 > 0
\]
for all $\psi \in C^{\infty} (\Sigma)$. The rest of the argument is the same as
{\cite{chodosh-generalized-2024}}.
\end{proof}

We now prove Theorem \ref{thm cylinder
splitting} by adapting the argument of {\cite{chodosh-splitting-2019}}.

\begin{proof}[Proof of Theorem \ref{thm cylinder splitting}]
  It follows from Theorem \ref{thm weighted fcs} that $\Sigma$ is flat. By
  scaling if needed, we can assume that $\Sigma$ is isometric to the standard
  cylinder $\mathbb{S}^1 \times \mathbb{R}$. If $\Sigma$ is separating, then
  $M\setminus \Sigma$ has two components; we choose one and denote it by
  $(\hat{M}, \hat{g})$. If $\Sigma$ is non-separating, then we cut
  along $\Sigma$ to obtain a new manifold which we denote also by $(\hat{M},
  \hat{g})$; $(\hat{M}, \hat{g})$ has two boundary components, from which we
  choose one. We set $\mathbb{S}^1 \times \{0\} \subset \Sigma$ to be $\ell$
  and $\Sigma_h$ to be $\mathbb{S}^1 \times [- h, h] \subset \Sigma$.
  
  Fix a unit speed geodesic $c : [0, \varepsilon) \to \hat{M}$ with $c (0) \in
  \ell$ and the tangent vector at $c (0)$ is normal to $\Sigma$. We can
  construct a family of positive functions $u_{r, t}$ (see {\cite[Lemma
  3.3]{hong-splitting-2025}}, cf. {\cite[Lemma 2.2]{antonelli-sharp-2024}})
  with the following conditions:
\begin{enumerate}[(a)]
    \item $u_{r, t} \to u$ in $C^3$ as $t, r \to 0$;
    
    \item $u_{r, t} \to u$ smoothly as $t \to 0$ for $r \in (0, \varepsilon)$
    fixed;
    
    \item $u_{r, t} = u$ on $\{x \in \hat{M} : \text{ }
    \ensuremath{\operatorname{dist}}_{\hat{g}} (x, c (2 r)) \geqslant 3 r\}$;
    
    \item $u_{r, t} < u$ on $\{x \in \hat{M} : \text{ }
    \ensuremath{\operatorname{dist}}_{\hat{g}} (x, c (2 r)) < 3 r\}$;
    
    \item $- \gamma u_{r, t}^{- 1} \Delta_g u_{r, t} + \tfrac{1}{2} R_g > 0$
    on $\{x \in \hat{M} : \text{ } r
    <\ensuremath{\operatorname{dist}}_{\hat{g}} (x, c (2 r)) < 3 r\}$;
    
    \item $H_{\Sigma} + \gamma u_{r, t}^{- 1} \partial_{\nu} u_{r, t}
    \geqslant 0$ (which we call weakly $u_{r, t}^{\gamma}$-weighted
    mean-convex with respect to the weight $u_{r, t}^{\gamma}$).
  \end{enumerate}
  Fix $h > 1$. Let $B_h$ denote a precompact open set with smooth boundary in
  $\hat{M}$ and such that $\{x \in \hat{M} : \text{ }
  \ensuremath{\operatorname{dist}}_{\hat{g}} (x, \Sigma_h) < 2 h\} \subset
  B_h$. We modify $u_{r, t}$ further near the boundary of $B_h$ to $u_{r, t,
  h}$ so $B_h$ is weakly weighted mean-convex with respect to the weight
  $u_{r, t, h}^{\gamma}$ and
  \begin{equation}
    (1 - \delta) u_{r, t} \leqslant u_{r, t, h} \leqslant (1 + \delta) u_{r,
    t} \label{u r t h}
  \end{equation}
  where $\delta \in (0, 1)$ is chosen to satisfy the relation \eqref{choice of
  delta}. Among all compact, oriented surfaces in $B_h$ with respect to
  $\partial \Sigma_h$ that bound an open subset of $\hat{M}$, there is one
  whose weighted area with respect to $u_{r, t, h}$ is least. Choose one such
  weight area-minimizing surface and denote it by $\Sigma_{r, t, h}$.
  
  We claim that $\Sigma_{r, t, h}$ intersects $\{x \in \hat{M} : \text{ }
  \ensuremath{\operatorname{dist}}_{\hat{g}} (x, c (2 r)) < 3 r\}$. For if
  not, we have that $u_{r, t, h} = u$ along $\Sigma_{r, t, h}$, and
\begin{align}
0 < & \mathcal{A}_u (\Sigma_h) -\mathcal{A}_{u_{r, t}} (\Sigma_h)
\label{compare 1} \\
\leqslant & \mathcal{A}_u (\Sigma_{r, t, h}) -\mathcal{A}_{u_{r, t}}
(\Sigma_h) \label{compare 2} \\
= & \mathcal{A}_{u_{r, t}} (\Sigma_{r, t, h}) -\mathcal{A}_{u_{r, t}}
(\Sigma_h) \label{compare 3} \\
\leqslant & \tfrac{1}{1 - \delta} \mathcal{A}_{u_{r, t, h}} (\Sigma_{r, t,
h}) -\mathcal{A}_{u_{r, t}} (\Sigma_h) \label{compare 4} \\
\leqslant & \tfrac{1}{1 - \delta} \mathcal{A}_{u_{r, t, h}} (\Sigma_h)
-\mathcal{A}_{u_{r, t}} (\Sigma_h) \label{compare 5} \\
\leqslant & \tfrac{1 + \delta}{1 - \delta} \mathcal{A}_{u_{r, t}}
(\Sigma_h) -\mathcal{A}_{u_{r, t}} (\Sigma_h) \label{compare 6}
\\
= & \tfrac{2 \delta}{1 - \delta} \mathcal{A}_{u_{r, t}} (\Sigma_h) .
\label{compare 7}
\end{align}
  We explain these relations: \eqref{compare 1} follows since
  $\{\ensuremath{\operatorname{dist}}_{\hat{g}} (x, c (2 r)) < 3 r\} \cap
  \Sigma_h$ is non-empty and $u_{r, t} < u$ on this set; \eqref{compare 2}
  follows since $\Sigma_h$ minimizes the $u^{\gamma}$-weighted area;
  \eqref{compare 3} follows since $\Sigma_{r, t, h}$ does not intersect
  $\{\ensuremath{\operatorname{dist}}_{\hat{g}} (x, 2 r) < 3 r\}$ on which
  $u_{r, t}$ and $u$ differ; \eqref{compare 4} follows from \eqref{u r t h};
  \eqref{compare 5} holds since $\Sigma_{r, t, h}$ minimizes $u_{r, t,
  h}^{\gamma}$-weighted area; again, \eqref{compare 6} is due to \eqref{u r t
  h}.
  
  This is impossible if we choose $\delta = \delta (r, t, h) > 0$ with
  \begin{equation}
    \tfrac{2 \delta}{1 - \delta} < \tfrac{\mathcal{A}_u (\Sigma_h)
    -\mathcal{A}_{u_{r, t}} (\Sigma_h)}{\mathcal{A}_{u_{r, t}} (\Sigma_h)} .
    \label{choice of delta}
  \end{equation}
  Now we can take a subsequential limit as $h \to \infty$ and obtain a
  properly embedded surface $\Sigma_{r, t}$. From the construction,
  $\Sigma_{r, t}$ is a boundary and homologically* $u^{\gamma}_{r,
  t}$-weighted area-minimizing.
  
  Following {\cite{chodosh-splitting-2019}} with Theorem \ref{thm weighted
  fcs} in place of {\cite[Lemmas 2.1-2.4]{chodosh-splitting-2019}} as we take
  limit as $t \to 0$, we obtain a family of surfaces $\Sigma_r$ which
  converges to $\Sigma$. If $\Sigma_r$ is a torus, we again use Theorem
  \ref{thm weighted fcs} to show that $\hat{M}$ is isometric to either
  standard $\mathbb{S}^1 \times \mathbb{R} \times [0, \infty)$ or
  $\mathbb{S}^1 \times \mathbb{R} \times [0, a]$ for some $a > 0$. We may
  assume that $\Sigma_r$ is cylindrical. By the proof of Theorem \ref{thm
  weighted fcs}, the gradient of $u$ vanishes along $\Sigma_r$. Assume that
  $u$ take different values on $\Sigma$ and $\Sigma_{r_0}$ for some $r_0$
  sufficiently small, by connecting two points $x \in \Sigma$ and $y \in
  \Sigma_{r_0}$ by a segment $\ell$. There exists a point $z \in \ell$ not
  equal to $x$ nor $y$ such that $u$ has a non-vanishing gradient. Take a small
  geodesic ball $B_{r_1} (z)$ centered at $z$ such that $\nabla u$ is nowhere
  vanishing, the family $\{\Sigma_r \}$ cannot intersect $B_{r_1} (z)$ from
  which we obtain a contradiction with that $\{\Sigma_r \}$ converges to
  $\Sigma$. Hence, $u$ is constant along all $\Sigma_r$ from which we are
  reduced to the case $u$ is constant, the case handled by
  {\cite{chodosh-splitting-2019}} and we finish the proof.
\end{proof}

\begin{remark}
  It is direct to define the notions of absolutely $u^{\gamma}$-weighted
  area-minimizing, homologically $u^{\gamma}$-weighted area-minimizing and
  homologically* $u^{\gamma}$-weighted area minimizing by adapting
  {\cite[Appendix]{chodosh-splitting-2019}} to the weighted case.
\end{remark}

\subsection{Proof of Theorem \ref{thm-spectral-classify-results}}

This subsection is devoted to the proof of \ref{thm-spectral-classify-results}.
We only need to prove $u$ is a constant on $M$. Then we can use the result of Liu \cite{Liu-three-Invent}.

First, by the result of Antonelli-Pozzetta-Xu {\cite[Theorem
1.1]{antonelli-sharp-2024}} and its proof, we may assume $\pi_2 (M) = 0$ and $M$ is not
diffeomorphic to $\mathbb{R}^3$. Next, by passing to a suitable covering, we
can further assume $\pi_1 (M) =\mathbb{Z}$ and $M$ is orientable. Let $\Gamma$
represent the generator of the fundamental group of $M$. We may assume
$\Gamma$ is a smooth closed curve. Consider an exhaustion of $M$ by $\Omega_i$
with smooth boundary $\partial \Omega_i$, where we can assume $\Gamma$ lies in
each $\Omega_i$. By Poincar{\'e} duality for manifolds with boundary, there
exists an oriented surface $\Sigma_i \subset \Omega_i$ such that $\partial
\Sigma_i \subset \Omega_i$ and the oriented intersection number of $\Sigma_i$
with $\Gamma$ equals $1$. We then consider minimizing the weighted area
$\int_{\Sigma} u^{\gamma}$ over all surfaces that belong to the same homology
class as $\Sigma_i$ and have the same boundary as $\Sigma_i$. We can perturb
the metric near $\partial \Omega_i$ such that $H+\gamma u^{-1}u_{\nu}>0$ on the boundary $\partial \Omega_{i}$. For each $i$, there exists a weighted minimizing
surface, which we still denote as $\Sigma_i$, and the intersection of
$\Sigma_i$ with $\Gamma$ is nonempty. By the curvature estimate \cite[Theorem
3.6]{zhou-existence-2020}, a subsequence of $\Sigma_i$ converges to
an oriented stable weighted minimal surface $\Sigma$ in $M$. By Lemma \ref{lm rewrite prelim} with $h=0$ and $H+ \gamma u^{-1}u_{\nu}=0$, we have
\begin{align*}
    \int_{\Sigma}\left[\frac{4}{4-\gamma}|\nabla_{\Sigma}\psi|^2+R_{\Sigma}\psi^2\right]&\geqslant\int_{\Sigma} (1 - \tfrac{\gamma}{4}) \gamma \left| \psi \nabla_{\Sigma} w
- \tfrac{1}{2 (1 - \gamma / 4)} \nabla_{\Sigma} \psi \right|^2+\int_{\Sigma} w_{\nu}^2\psi^2\\
&\quad+\int_{\Sigma}\left(-\gamma u^{- 1} \Delta_g u +2\operatorname{Ric}_{g}\right)\psi^2,
\end{align*}
where $w = \log u$ and $\psi\in C^{\infty}_{c}(\Sigma)$.

If $-\gamma u^{- 1} \Delta_g u +\operatorname{Ric}_{g}> 0$ and $-\gamma u^{- 1} \Delta_g u +2\operatorname{Ric}_{g}> 0$, then $\Sigma$ is either compact and diffeomorphic to $\mathbb{S}^2$( which contradicts $\pi_{2}(M)=0$) or $\Sigma$ is non-compact. 
When $\Sigma$ is non-compact, there are two subcases: 
(i) $\Sigma$ is conformal to the cylinder, which contradicts $(b)$ in Theorem \ref{thm weighted fcs};
(ii) $\Sigma$ is conformal to the complex plane.

In Case (ii), we apply Lemma \ref{lm rewrite prelim} again with $h=0$ and $H+\gamma u^{-1}u_{\nu}=0$, yielding
\begin{align}\label{ineqn for spec ricci}
 \tfrac{4}{4 - \gamma}\int_{\Sigma} | \nabla_{\Sigma} \psi |^2& \geq
  \int_{\Sigma} (1 - \tfrac{\gamma}{4}) \gamma \left| \psi \nabla_{\Sigma} w
- \tfrac{1}{2 (1 - \gamma / 4)} \nabla_{\Sigma} \psi \right|^2 \\
& \quad + \int_{\Sigma}\left(-\gamma u^{- 1} \Delta_g u +\operatorname{Ric}_{g}\right)  \psi^2+ \int_{\Sigma}(\gamma-\tfrac{\gamma^2}{2}) w_{\nu}^2\psi^2,
\end{align}
  where $w = \log u$ and $\psi\in C^{\infty}_{c}(\Sigma)$. Recall that we assume that $-\gamma u^{- 1} \Delta_g u +\operatorname{Ric}_{g}\geqslant0$ and $0<\gamma<2$, by \cite[Theorem 1.1]{berard-inverse-2014}, $\Sigma$ has at most quadratic volume growth. We can then apply the logarithmic cut-off trick (see \cite[Proposition 1.37]{CM-2011-book}) to deduce that $-\gamma u^{- 1} \Delta_g u +\operatorname{Ric}_{g}=0$, $(\log u)_{\nu}=0$ and $\nabla_{\Sigma}\log u=0$. This is a contradiction. 

The remaining case is $-\gamma u^{- 1} \Delta_g u +\operatorname{Ric}_{g}\geqslant0$ or $-\gamma u^{- 1} \Delta_g u +2\operatorname{Ric}_{g}\geqslant0$.  Following Liu {\cite{Liu-three-Invent}}, we handle this case as follows. Fixed a
point $p \in M$ such that $p \not\in \Gamma$; we aim to deform the metric $g$ so that the spectral Ricci curvature is strictly positive in an annulus
region around $p$. Let $g_t = e^{2 tf} g$ and $|v|_g = 1$, we then have

\begin{align*}
  & \quad - \gamma \frac{\Delta_{g_t} u}{u} + \operatorname{Ric}_{g_t}\\
  & = e^{- 2 tf}  \left[ - \frac{\Delta_g u}{u} + \operatorname{Ric}_g \right]\\
  & \quad + e^{- 2 tf}  \left[ \frac{t}{u}  \hspace{0.27em} g (\nabla u,
  \nabla f) - t \nabla^2 f (v, v) - t \Delta_g f + t^2  ((v (f))^2 - | \nabla
  f|^2) \right]
\end{align*}

where we have used the identity
\[ \Delta_{g_t} u = e^{- 2 tf}  [\Delta_g u + t \langle \nabla u, \nabla f
   \rangle_g] . \]
Let $r$ denote the distance function to $p$. For a sufficiently small $R > 0$,
consider the function $\rho = R - r$ defined for $\frac{R}{2} < r < R$; we
then extend $\rho$ to be a positive smooth function for $0 \leqslant r <
\frac{R}{2}$. With this $\rho$, define $f = - \rho^5$. For $\frac{R}{2} < r <
R$, we obtain

\begin{align*}
  & \quad - \gamma \frac{\Delta_{g_t} u}{u} + \operatorname{Ric}_{g_t}\\
  & \geqslant e^{- 2 tf}  \left[ - \frac{\Delta_g u}{u} + \operatorname{Ric}_g \right]\\
  & \quad + e^{- 2 tf}  \left[ 5  t \rho^4u^{-1} g (\nabla u,
  \nabla \rho)+ 20 t \rho^3 + 5
  t \rho^4 (\Delta_g \rho + \nabla^2 \rho (v, v)) - 25 t^3 \rho^8 \right] .
\end{align*}

By the almost Euclidean propery of $M$ near $p$, for small $R$, we have
\[ | \Delta_g \rho + \nabla^2 \rho (v, v) | \leqslant\frac{9}{8 (R - \rho)} . \]
For all small $t$, if $r$ sufficiently close to $R$, then $\rho$ is close to
$0$. In this case,
\[ 20 t \rho^3 - 5  t \rho^4u^{-1} g (\nabla u,
  \nabla \rho) + 5 t \rho^4 (\Delta_g \rho +
   \nabla^2 \rho (v, v)) - 25 t^3 \rho^8 > 0 \]
since the leading term is $\rho^3$ when $\rho$ is small. When $t$ small enough, we can also ensure $- \gamma \frac{\Delta_{g_t} u}{u} + 2\operatorname{Ric}_{g_t}>0$ at the same time. Note that the metric
$g_t$ remains unchanged outside $B_p (R)$, and this metric deformation(i.e.,
$g_t$) is $C^4$ continuous with respect to the metric $g$ and
$C^{\infty}$-continuous with respect to $t$.

Since $\Gamma$ is closed, we can apply this perturbation finitely many times
such that the spectral Ricci curvature is positive on $\Gamma$ (each time we
perturb the metric a little bit around a point) and nonnegative except for a
small neighborhood of $p$. We then minimize the weighted area functional as we
did earlier, which yields a complete stable weighted minimal surface $\Sigma$.

We now claim that $\Sigma$ must pass through this small neighborhood of $p$.
If this were not the case, the spectral Ricci curvature on $\Sigma$ would be
nonnegative, and strictly positive at some point on $\Gamma$. This leads to a
contradiction as before.

Let $t$ denote the deformation parameter. We shrink the size of the
neighborhood of $p$ where the spectral Ricci curvature might be negative; this
allows us to construct a sequence of metrics on $M$, each admitting a stable
weighted minimal surface passing through the small neighborhood of $p$. We can
choose $t \to 0$ sufficiently rapidly such that these metrics converge to the
original metric in a $C^4$ sense.

By passing to a subsequence of these complete weighted minimal surfaces and
taking the limit, we obtain a completely oriented stable weighted minimal
surface $\Sigma$ passing through $p$ with respect to the original metric. This holds for any $p\in M\setminus \Gamma$.

By Lemma \ref{lm rewrite prelim}, for $h=0$ and $H+\gamma u^{-1}u_{\nu}=0$, we have
\begin{align*}
    \int_{\Sigma}\left[\frac{4}{4-\gamma}|\nabla_{\Sigma}\psi|^2+R_{\Sigma}\psi^2\right]&\geqslant\int_{\Sigma} (1 - \tfrac{\gamma}{4}) \gamma \left| \psi \nabla_{\Sigma} w
- \tfrac{1}{2 (1 - \gamma / 4)} \nabla_{\Sigma} \psi \right|^2+\int_{\Sigma} w_{\nu}^2\psi^2\\
&\quad+\int_{\Sigma}\left(-\gamma u^{- 1} \Delta_g u +2\operatorname{Ric}_{g}\right)\psi^2,
\end{align*}
and 
\begin{align}\label{ineqn for spec ricci}
 \tfrac{4}{4 - \gamma}\int_{\Sigma} | \nabla_{\Sigma} \psi |^2& \geq
  \int_{\Sigma} (1 - \tfrac{\gamma}{4}) \gamma \left| \psi \nabla_{\Sigma} w
- \tfrac{1}{2 (1 - \gamma / 4)} \nabla_{\Sigma} \psi \right|^2 \\
& \quad + \int_{\Sigma} \left(-\gamma u^{- 1} \Delta_g u +\operatorname{Ric}_{g}\right)  \psi^2+ \int_{\Sigma} (\gamma-\tfrac{\gamma^2}{2})w_{\nu}^2\psi^2,
\end{align}
where $w = \log u$ and $\psi\in C^{\infty}_{c}(\Sigma)$.

If $u$ is not constant on $\Sigma$, then using the same argument as before (given our assumptions that $-\gamma u^{- 1} \Delta_g u +\operatorname{Ric}_{g}\geqslant0$, $-\gamma u^{- 1} \Delta_g u +2\operatorname{Ric}_{g}\geqslant0$ and $0<\gamma<2$),  we can deduce that $-\gamma u^{- 1} \Delta_g u +\operatorname{Ric}_{g}=0$, $(\log u)_{\nu}=0$ and $\nabla_{\Sigma}\log u=0$, i.e., $u$ is a constant on $\Sigma$. This is a contradiction. 

  This implies that $\operatorname{Ric}_{g}$ is non-negative on $p\in M\setminus \Gamma$ and that $u$ is constant on $M\setminus \Gamma$. By the continuity of $\operatorname{Ric}_{g}$, we conclude that $\operatorname{Ric}_{g}$ is non-negative on the entire manifold $M$. Then we can use the result of Liu \cite{Liu-three-Invent}.

\appendix\section{\label{warped product curvature}Warped product curvatures}

In this appendix, we calculate the frequently used curvatures and relations of
a warped product and a doubly warped product.

\subsection{Warped product}\label{app warp} Let $g = \mathrm{d} t^2 + \phi^2 (t)
\bar{g}$ where $g$ is a metric on the manifold $M$. We calculate $\operatorname{Ric}
(e_i, e_i$) where $e_i$ is tangential to $M$ of unit $g$-length. For
convenience, we set $a_i = \overline{\operatorname{Ric}} (\phi e_i, \phi e_i$) where
$\overline{\operatorname{Ric}}$ is the Ricci curvature of ($M, \bar{g}$). Note that
$\phi e_i$ is of unit $\bar{g}$-length and $\{a_i \}$ are the Ricci curvature
of the metric $\bar{g}$.

By the Gauss equation,
\begin{align}
R_{ii} = & \sum_j R_{ijji} + R_{i \nu \nu i} \\
& = \sum_j (\bar{R}_{ijji} - h_{jj} h_{ii} + h_{ij} h_{ij}) + R_{i \nu \nu
i} \\
& = \bar{R}_{ii} - Hh_{ii} + h_{ii}^2 + R_{i \nu \nu i} \\
& = a_i \phi^{- 2} - (n - 2) (\phi' / \phi)^2 + R_{i \nu \nu i}
\end{align}
where we used that $H = (n - 1) \tfrac{\phi'}{\phi}$ and $h_{ii} = \phi' /
\phi$, it remains to calculate $\bar{R}_{i \nu \nu i}$. We first compute the
necessary Christofel symbols $\Gamma_{nn}^i$ and $\Gamma_{ni}^j$ of $g$:
\[ \Gamma_{nn}^i = \tfrac{1}{2} \sum_{l \leqslant n} g^{il} (2 g_{nl, n} -
   g_{nn, l}) = 0, \Gamma_{nn}^n = 0, \]
and
\begin{align}
\Gamma_{in}^j = \tfrac{1}{2} & \sum_{l \leqslant n} g^{jl} (g_{li, n} +
g_{ni, l} - g_{in, l}) = \tfrac{1}{2} \sum_{l \leqslant n - 1} \phi^{- 2}
\bar{g}^{jl} (\phi^2  \bar{g}_{li})_n = \phi' \delta_i^j / \phi .
\end{align}
And the component
\begin{align}
R_{i \nu \nu i} = & \partial_i \Gamma_{nn}^i - \partial_n \Gamma_{in}^i +
\sum_{s \leqslant n} (\Gamma_{nn}^s \Gamma_{is}^i - \Gamma_{in}^s
\Gamma_{ns}^i) \\
= & - (\phi' / \phi)' - \sum_{s \leqslant n - 1} \Gamma_{in}^s \Gamma_{ns}^i
= - (\phi' / \phi)' - (\phi' / \phi)^2
\end{align}
follows easily. Hence,
\[ R_{ii} = a_i \phi^{- 2} - (n - 1) (\phi' / \phi)^2 - (\phi' / \phi)' . \]
As for $\operatorname{Ric} (\nu, \nu$), we use the first variation of the mean
curvature of the $t$-level set. We get
\[ H' = - \operatorname{Ric} (\nu, \nu) - |A|^2 . \]
Hence,
\[ \operatorname{Ric} (\nu, \nu) = - (n - 1) (\phi' / \phi)' - (n - 1) (\phi' /
   \phi)^2 . \]
The scalar curvature $R_{g}$ of the metric $g$ is
\[R_{g}  = \frac{R_{\bar{g}}}{\phi^2}- 2 (n - 1) \frac{\phi''}{\phi} - (n - 1) (n - 2)
   \frac{(\phi')^2}{\phi^2}, \]
which can be found via Schoen-Yau's rewrite \eqref{eq sy rewrite}. The Laplace
of the metric is
\[ \Delta_g = \partial_t^2 + H_t \partial_t + \frac{1}{\phi^2}
   \Delta_{g_{\mathbb{T}^{n - 1}}} . \]
\subsection{Doubly warped product}\label{app doubly}Let $g = \mathrm{d} t^2 +
\phi (t)^2 \mathrm{d} s_1^2 + \varphi (t)^2 \mathrm{d} s_2^2$. The non-zero
components of its Ricci curvatures are given by
\begin{align}
\ensuremath{\operatorname{Ric}} (\partial_t, \partial_t) & = - (\phi
\varphi)^{- 1} (\phi \varphi'' + \varphi'' \phi), \\
\ensuremath{\operatorname{Ric}} (e_1, e_1) & = - (\phi \varphi)^{- 1}
(\varphi \phi'' + \varphi' \phi'), \\
\ensuremath{\operatorname{Ric}} (e_2, e_2) & = - (\phi \varphi)^{- 1}
(\varphi'' \phi + \varphi' \phi'),
\end{align}
where $e_1 = \phi^{- 1} \partial_{s_1}$ and $e_2 = \varphi^{- 1}
\partial_{s_2}$. It is convenient to introduce $G = \phi' / \phi + \varphi' /
\varphi$ and $F = \phi' / \phi - \varphi' / \varphi$. We often need to impose
that $\ensuremath{\operatorname{Ric}} (e_1, e_1)
=\ensuremath{\operatorname{Ric}} (e_2, e_2)$ and
$\ensuremath{\operatorname{Ric}} (\partial_t, \partial_t) \geqslant
\ensuremath{\operatorname{Ric}} (e_i, e_i)$. The former gives $\phi'' / \phi =
\varphi'' / \varphi$ and then
\[ F' = \tfrac{\phi''}{\phi} - (\tfrac{\phi'}{\phi})^2 -
   \tfrac{\varphi''}{\varphi} + (\tfrac{\varphi'}{\varphi})^2 = - F G. \]
That is,
\begin{equation}
  (\phi' / \phi - \varphi' / \varphi)' = - (\phi' / \phi - \varphi' / \varphi)
  (\phi' / \phi + \varphi' / \varphi) . \label{eq F G}
\end{equation}
With $\phi'' / \phi = \varphi' / \varphi$, the latter gives
\begin{align}
0 \geqslant & \tfrac{\phi''}{\phi} - \tfrac{\phi' \varphi'}{\phi \varphi}
\\
= & \tfrac{1}{2} (\tfrac{\phi''}{\phi} + \tfrac{\varphi''}{\varphi}) -
\tfrac{\phi' \varphi'}{\phi \varphi} \\
= & \tfrac{1}{2} ((\phi' / \phi + \varphi' / \varphi)' +
(\tfrac{\phi'}{\phi})^2 + (\tfrac{\varphi'}{\varphi})^2) - \tfrac{\phi'
\varphi'}{\phi \varphi} \\
= & \tfrac{1}{2} (G' + \tfrac{1}{2} (F^2 + G^2)) - \tfrac{1}{4} (G^2 - F^2)
\\
= & \tfrac{1}{2} (G' + F^2),
\end{align}
which is
\begin{equation}
  (\phi' / \phi + \varphi' / \varphi)' + (\phi' / \phi - \varphi' / \varphi)^2
  \leqslant 0 \label{eq G F}
\end{equation}

\bibliographystyle{alpha}
\bibliography{spec-rig}
\end{document}